\def\today{\number\day\space\ifcase\month\or   January\or February\or
	March\or April\or May\or June\or   July\or August\or September\or
	October\or November\or December\fi\   \number\year}
\newtheorem{lma}{Lemma}[section]
\newaliascnt{thmCt}{lma}
\newtheorem{thm}[thmCt]{Theorem}
\newaliascnt{corCt}{lma}
\newtheorem{cor}[corCt]{Corollary}
\newaliascnt{constrCt}{lma}
\newtheorem{constr}[constrCt]{Construction}
\newaliascnt{propCt}{lma}
\newtheorem{prop}[propCt]{Proposition}
\newtheorem*{thm*}{Theorem}
\newtheorem*{cor*}{Corollary}
\newtheorem*{prop*}{Proposition}
\theoremstyle{definition}
\newtheorem*{def*}{Definition}
\theoremstyle{plain}
\newtheorem{thmintro}{Theorem}
\newcounter{theoremintro}
\newaliascnt{corintroCt}{thmintro}
\newtheorem{corintro}[corintroCt]{Corollary}
\theoremstyle{definition}
\newtheorem{pbmintro}[theoremintro]{Problem}
\newaliascnt{pgrCt}{lma}
\newaliascnt{dfCt}{lma}
\theoremstyle{definition}
\newtheorem{df}[dfCt]{Definition}
\newaliascnt{remCt}{lma}
\newtheorem{rem}[remCt]{Remark}
\newaliascnt{remsCt}{lma}
\newaliascnt{egCt}{lma}
\newaliascnt{egsCt}{lma}
\newaliascnt{qstCt}{lma}
\newaliascnt{pbmCt}{lma}
\newaliascnt{notaCt}{lma}
\newtheorem{nota}[notaCt]{Notation}
\newcommand{\beq}{\begin{equation}}
	\newcommand{\eeq}{\end{equation}}
\newcommand{\beqa}{\begin{eqnarray*}}
	\newcommand{\eeqa}{\end{eqnarray*}}
\newcommand{\bal}{\begin{align*}}
	\newcommand{\eal}{\end{align*}}
\newcommand{\bi}{\begin{itemize}}
	\newcommand{\ei}{\end{itemize}}
\newcommand{\be}{\begin{enumerate}}
	\newcommand{\ee}{\end{enumerate}}
\newcommand{\N}{{\mathbb{N}}}
\newcommand{\supp}{{\mathrm{supp}}}
\newcommand{\ca}{$C^*$-algebra}
\newcommand{\abs}[1]{\left|#1\right|}
\title{Essential freeness, allostery and $\mathcal{Z}$-stability of crossed
	products}
\date{\today}
\thanks{
	The first named author was partially supported by the Swedish Research Council Grant 2021-04561.
	The rest of the authors were funded by the Deutsche Forschungsgemeinschaft (DFG, German Research Foundation) – Project-ID 427320536 – SFB 1442, as well as by Germany’s Excellence Strategy – EXC 2044 – 390685587, Mathematics M\"{u}nster – Dynamics – Geometry – Structure; and ERC Advanced Grant 834267 -- AMAREC}
\author[E.~Gardella]{Eusebio Gardella}
\address{Eusebio Gardella,
	Department of Mathematical Sciences,
	Chalmers University of Technology and University of Gothenburg,
	Gothenburg SE-412 96, Sweden}
\email{gardella@chalmers.se}
\author[S.~Geffen]{Shirly Geffen}
\address{Shirly Geffen,
	Mathematisches Institut,
	University of M{\"u}nster, 
	Einsteinstr.\ 62, 
	48149 M{\"u}nster, Germany}
\email{sgeffen@uni-muenster.de}
\author[R.~Gesing]{Rafaela Gesing}
\address{Rafaela Gesing,
	Mathematisches Institut,
	University of M{\"u}nster, 
	Einsteinstr.\ 62, 
	48149 M{\"u}nster, Germany}
\email{rgesing@uni-muenster.de}
\author[G.~Kopsacheilis]{Grigoris Kopsacheilis}
\address{Grigoris Kopsacheilis,
	Mathematisches Institut,
	University of M{\"u}nster, 
	Einsteinstr.\ 62, 
	48149 M{\"u}nster, Germany}
\email{gkopsach@uni-muenster.de}
\author[P.~Naryshkin]{Petr Naryshkin}
\address{Petr Naryshkin,
	Mathematisches Institut,
	University of M{\"u}nster, 
	Einsteinstr.\ 62, 
	48149 M{\"u}nster, Germany}
\email{pnaryshk@uni-muenster.de}
\begin{document}
	
\begin{abstract}
We explore classifiability of crossed products of actions of countable amenable groups on compact, metrizable spaces. 
It is completely understood when such crossed products are simple, separable, unital, nuclear and satisfy the UCT: these properties are equivalent to the combination of minimality and topological freeness, and the challenge in this context is establishing $\mathcal{Z}$-stability. While most of the existing results in this direction assume freeness of the action, there exist numerous natural examples of minimal, topologically free (but not free) actions whose crossed products are classifiable. 
		
In this work, we take the first steps towards a systematic study of $\mathcal{Z}$-stability for crossed products beyond the free case, extending the available machinery around the small boundary property and almost finiteness to a more general setting. Among others, for actions of groups of polynomial growth with the small boundary property, we show that minimality and topological freeness are not just necessary, but also \emph{sufficient} conditions for classifiability of the crossed product. 
		
Our most general results apply to actions that are essentially free, a property weaker than freeness but stronger than topological freeness in the minimal setting. Very recently, M.\ Joseph produced the first examples of minimal actions of amenable groups which are topologically free and not essentially free. While the current machinery does not give any information for his examples, we develop ad-hoc methods to show that his actions have classifiable crossed products.
\end{abstract}

\maketitle

\tableofcontents

\renewcommand*{\thetheoremintro}{\Alph{theoremintro}}
\section{Introduction}

One of the most remarkable achievements in $C^*$-algebra theory in the last decades is the completion of the classification programme initiated by George Elliott over 30 years ago. The outcome is the combination of the work of a large number of mathematicians over several decades (see, for instance, \cite{Phi_classification_2000,GonLinNiu_classification_2015,		EllGonLinNiu_classification_2015, TikWhiWin_quasidiagonality_2017, CasEviTikWhiWin_nuclear_2019}), and can be stated as follows:

\begin{thm*}(Classification Theorem).
Let $A$ and $B$ be simple, separable, unital, nuclear, $\mathcal{Z}$-stable \ca s satisfying the UCT. Then $A$ and $B$ are isomorphic if and only if their Elliott invariants are isomorphic.
\end{thm*}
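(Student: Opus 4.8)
The plan is to establish the two directions separately. The \emph{only if} direction is routine functoriality: an isomorphism $A\to B$ induces an order isomorphism of $K_0$ preserving the class of the unit, a homeomorphism of the tracial state spaces, and it intertwines the pairings between traces and $K_0$, so the Elliott invariants are isomorphic. All the content lies in the \emph{if} direction, and here I would follow the architecture of the classification programme rather than attempt anything self-contained.

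For the hard direction, the first move is to reduce the isomorphism problem to a classification of morphisms via Elliott's approximate intertwining argument. If one can prove an \emph{existence} theorem (every isomorphism of Elliott invariants is induced, approximately, by a $*$-homomorphism) and a \emph{uniqueness} theorem (two $*$-homomorphisms inducing the same map on the invariant are approximately unitarily equivalent), then a standard back-and-forth construction produces an isomorphism $A\cong B$ realizing the prescribed invariant isomorphism. So everything reduces to existence and uniqueness for the class at hand. At this point I would split along the Cuntz dichotomy: a simple, separable, unital, nuclear, $\mathcal{Z}$-stable \ca{} is either purely infinite or stably finite. In the purely infinite case the tracial data is vacuous, the invariant collapses to $(K_0,[1_A],K_1)$, and I would invoke the Kirchberg--Phillips classification of unital Kirchberg algebras satisfying the UCT, which supplies existence and uniqueness directly through $KK$-theory and the UCT.

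The stably finite case is genuinely harder, and this is where I would concentrate the effort. The strategy is to first upgrade $\mathcal{Z}$-stability to finite nuclear dimension (the relevant implication of the Toms--Winter regularity package), and then to use the UCT together with nuclearity and amenability of the traces to deduce, via the Tikuisis--White--Winter quasidiagonality theorem, that every trace is quasidiagonal. These two regularity inputs place $A$ and $B$ in the inductive-limit class for which the existence and uniqueness theorems of the Gong--Lin--Niu and Elliott--Gong--Lin--Niu programme are available, with the interplay between $K$-theory and traces controlled through the $KK$-theoretic pairing furnished by the UCT; this completes the stably finite case and hence the theorem.

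The main obstacle is precisely this stably finite step, and within it the two deepest inputs: deriving finite nuclear dimension from $\mathcal{Z}$-stability and establishing quasidiagonality of the traces from the UCT. Each is a major theorem in its own right, and each is where the full strength of the hypotheses (nuclearity, $\mathcal{Z}$-stability, and the UCT) is genuinely consumed; by contrast, the intertwining reduction and the purely infinite case are comparatively formal. I would therefore expect the write-up to consist almost entirely of assembling these deep external results into the existence/uniqueness framework, rather than of a single new argument.
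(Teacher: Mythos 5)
Your proposal is correct and takes essentially the same approach as the paper: the paper does not prove the Classification Theorem itself but presents it as the combination of precisely the works you assemble --- Kirchberg--Phillips classification in the purely infinite case, and in the stably finite case the Gong--Lin--Niu and Elliott--Gong--Lin--Niu existence/uniqueness theorems together with Tikuisis--White--Winter quasidiagonality and the passage from $\mathcal{Z}$-stability to finite nuclear dimension, all combined via Elliott's intertwining argument. Your reduction to classification of morphisms also matches the alternative route the paper cites (Carri\'on--Gabe--Schafhauser--Tikuisis--White), so there is no substantive divergence to report.
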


We refer the reader to Winter's 2018 ICM address \cite{Win_ICM} for an overview and historical account of the developments in this direction, as well as for a detailed explanation of the assumptions in the theorem. There is also a very recent alternative approach to prove the classification theorem \cite{CarGabSchTikWhi_classifying_2023}, relying on classification results for maps between suitable classes of $C^*$-algebras. We refer the reader to White's 2022 ICM address \cite{Whi_ICM} for more on this approach.

With such a powerful classification theorem at our disposal, it becomes an imperative task to identify interesting classes of $C^*$-algebras to which it can be applied. One of the most natural families of $C^*$-algebras arises from topological dynamics via the crossed product construction.

In recent years, a significant amount of work has been done to identify dynamical criteria for an action $G\curvearrowright X$ of a countable discrete group on a compact metric space that ensure that the associated crossed product $C(X)\rtimes G$ satisfies the assumptions of the classification theorem\footnote{Since we will work exclusively with amenable actions, the choice of the crossed product completion here is irrelevant.}. Unitality and separability of $C(X)\rtimes G$ are automatic, while nuclearity of $C(X)\rtimes G$ is equivalent to amenability of $G\curvearrowright X$. Moreover, if $G\curvearrowright X$ is amenable, then $C(X)\rtimes G$ automatically satisfies the UCT by results of Tu \cite{Tu99}, and it is simple if and only if $G\curvearrowright X$ is minimal and topologically free by results of Archbold and Spielberg \cite{ArcSpi94}. In particular, and we want to stress this, amenability, minimality, and topological freeness are \emph{necessary} conditions for classifiability of $C(X)\rtimes G$, and it remains to decide when $C(X)\rtimes G$ is $\mathcal{Z}$-stable. 

Like most works on the subject, we will restrict our attention to the case where $G$ is itself amenable (which, for amenable actions, is equivalent to the crossed product being stably finite), and refer to \cite{GarGefKraNar_classifiability_2022} for recent progress in the nonamenable case. For the convenience of the reader, and since the subtle differences between these notions play a crucial role in our work, we recall the following:
	
\begin{def*}
Let $G$ be a discrete group, and let $X$ be a topological space. We say that an action $G\curvearrowright X$ is:

\begin{enumerate}
\item \emph{free}, if for every $g\in G\setminus\{1\}$, we have $\{x\in X\colon g\cdot x=x\}=\emptyset$;

\item \emph{essentially free}, if for every $g\in G\setminus\{1\}$, we have $\mu\big(\{x\in X\colon g\cdot x=x\}\big)=0$ for every $G$-invariant Borel probability measure $\mu$ on $X$;

\item \emph{topologically free}, if for every $g\in G\setminus\{1\}$, the interior of $\{x\in X\colon g\cdot x=x\}$ is empty. 
\end{enumerate}

\end{def*}
	
It is clear that freeness implies both essential freeness and topological freeness. For minimal actions of amenable groups, essential freeness also implies topological freeness\footnote{This follows from the fact that if the action is minimal, then no invariant probability measure is zero on a nonempty open set; see \cite[Lemma~2.2]{Jos21}.}. In other words, for the actions we are concerned with in this work, we have (1) $\Rightarrow$ (2) $\Rightarrow$ (3).
	
The distinctness of the three notions defined above turns out to depend on the acting group. Indeed, for minimal actions of \emph{abelian} groups, (2) implies (1): any topologically free action is automatically free\footnote{To prove this, one shows that the fixed point set of any group element is closed, and also $G$-invariant if the group $G$ is abelian.}. However, already the infinite dihedral group $\mathbb{Z}\rtimes\mathbb{Z}_2$ admits a natural essentially free action on the unit circle that is not free\footnote{For example, one can let the copy of $\mathbb{Z}$ act by an irrational rotation and $\mathbb{Z}_2$ act by complex conjugation.}, showing that (2) does not in general imply (1).
	
The difference between (2) and (3) is much more subtle. They are known to coincide for minimal actions of finitely generated groups of polynomial growth and polycyclic groups (see, for example, \cite[Corollary~2.4]{Jos21}), and it was actually open for some time whether there exists a minimal action of an amenable group that satisfies (3) but not (2) -- such actions are called \emph{allosteric} by Joseph \cite{Jos21}. The first allosteric actions of amenable groups were recently constructed by Joseph in \cite{Jos23}, and we refer to the introduction of this paper for more about the significance of the existence of  such actions. Shortly after, a generalization of Joseph's construction was presented by Hirshberg and Wu in \cite{HirWu23}.
	
Returning to the question of classifiability of crossed products, most of the work in this direction has been done under the assumption of freeness, and the aim of the present work is to initiate a systematic study of the question under the less restrictive assumptions of essential freeness or topological freeness. The most general problem can be formulated as follows.

\begin{pbmintro}\label{pbm:intro}
Let $G\curvearrowright X$ be an amenable, topologically free, minimal action of a discrete group on a compact metric space. Find a dynamical characterization of $\mathcal{Z}$-stability for $C(X)\rtimes G$.
\end{pbmintro}
	
In the case of free actions, the modern approach relies on proving that the action is \emph{almost finite}, a notion introduced by Kerr in \cite{Ker_dimension_2020}, which is known to imply that the crossed product is classifiable. By the work of Kerr and Szab{\'o} \cite{KerSza_almost_2020} (always under freeness assumptions), almost finiteness is equivalent to the conjunction of two weaker properties: \emph{almost finiteness in measure} (\autoref{def almost finite in measure}) and \emph{comparison} (\autoref{def sbp}). For free actions, the former is known to be equivalent to the \emph{small boundary property} and frequently holds automatically (for instance, for free actions on finite-dimensional spaces), although not always \cite{GioKer_subshifts_2010}. Thus, the main challenge often lies in proving comparison. 

Although stated for free actions in the literature, the definition of almost finiteness does not imply freeness of the action and, in fact, the property holds for many non-free actions as well. On the other hand, it is easy to see that any action which is almost finite (in measure) must be essentially free (see \autoref{rem:EssFreeAlmostFinite}). Thus, essential freeness arises naturally in the context of the available tools, which suggests that one should try to extend the existing machinery to the essentially free setting. However, even if this can be carried out successfully, new tools would have to be developed to deal with allosteric actions. In this paper we obtain results in both of these situations.
	
For essentially free actions, the first results were obtained by Ortega and Scarparo \cite{OrtSca_almost_2023}, who directly proved almost finiteness for minimal actions of the infinite dihedral group $\mathbb{Z}\rtimes\mathbb{Z}_2$ on Cantor spaces as well as essentially free odometers. The next step has recently been taken by Li and Ma in \cite{LiMa23}, where it is shown that almost finiteness in measure and the small boundary property are equivalent for essentially free actions of locally finite-by-virtually $\mathbb{Z}$ groups. The methods developed there strongly rely on certain permanence properties of dynamical features with respect to extensions of actions, which is why these results are limited to that class of groups.
	
Extending results from the free to the essentially free setting one may not seem to be a significant advance at first sight, but there are in general a number of technical difficulties when dealing with non-free actions. One of the most immediate problems that one encounters is that the restriction of an essentially free action to a subgroup is not necessarily essentially free (unlike the situation for free actions), thus obstructing a number of arguments involving extensions of groups and restrictions of actions.

In the first part of this paper, we draw inspiration from \cite{Ker_dimension_2020,KerSza_almost_2020} and extend some of their main results to essentially free actions. While our arguments largely follow theirs, we do have to carefully handle fixed point sets and estimate their measures with respect to invariant Borel probability measures. Our approach, despite leading to tedious arguments at times, allows us to overcome the limitations regarding the structure of the groups in the results obtained in \cite{LiMa23}.

Our first main result is as follows.

\begin{thmintro}\label{thmintro: AF in measure}
Let $G\curvearrowright X$ be an essentially free action of a discrete amenable group on a compact metrizable zero-dimensional space. Then $G\curvearrowright X$ is almost finite in measure.
\end{thmintro}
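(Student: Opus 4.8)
The plan is to follow the tower-building strategy of Kerr and Kerr--Szab\'o for free actions, superimposing onto it a mechanism for excising the (measure-null) fixed-point locus. Recall (\autoref{def almost finite in measure}) that to prove almost finiteness in measure it suffices to produce, for each finite $F\subseteq G$ and each $\varepsilon>0$, a clopen castle $\{(S_i,V_i)\}_{i=1}^{n}$ --- i.e.\ clopen bases $V_i\subseteq X$ and finite shapes $S_i\subseteq G$ whose levels $\{sV_i:1\le i\le n,\ s\in S_i\}$ are pairwise disjoint --- with each $S_i$ being $(F,\varepsilon)$-invariant and
\[
\sup_{\mu\in M_G(X)}\mu\Bigl(X\setminus\bigcup_{i=1}^{n}\bigcup_{s\in S_i}sV_i\Bigr)<\varepsilon,
\]
where $M_G(X)$ denotes the (weak$^*$-compact, nonempty by amenability) set of $G$-invariant Borel probability measures. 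Fix $F$ and $\varepsilon$, and assume $1\in F=F^{-1}$.

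The one place where the free-action arguments genuinely use freeness is in guaranteeing that the levels of a tower are disjoint: for a base $V$ and $s\neq t$ in a shape, $sV\cap tV=\emptyset$ amounts to $V\cap hV=\emptyset$ for $h=s^{-1}t$, and when $h$ has no fixed points this is arranged by shrinking $V$. The substitute we need is a uniform measure-theoretic version of this. First I would isolate the following lemma, which is where essential freeness and zero-dimensionality enter: if $C\subseteq X$ is closed with $\mu(C)=0$ for every $\mu\in M_G(X)$, then for every $\delta>0$ there is a clopen set $U\supseteq C$ with $\sup_{\mu\in M_G(X)}\mu(U)<\delta$. This follows from weak$^*$ compactness: for each $\mu$, outer regularity together with zero-dimensionality yields a clopen $U_\mu\supseteq C$ with $\mu(U_\mu)<\delta$; since $\nu\mapsto\nu(U_\mu)$ is continuous (the indicator of a clopen set being continuous), the sets $\{\nu:\nu(U_\mu)<\delta\}$ form an open cover of $M_G(X)$, and intersecting the finitely many clopen $U_{\mu_j}$ from a finite subcover produces the desired $U$.

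With this in hand the construction proceeds as follows. Using amenability I would first extract, via the Ornstein--Weiss quasitiling machinery (equivalently, the exact tilings of Downarowicz--Huczek--Zhang), finitely many $(F,\varepsilon)$-invariant shapes $S_1,\dots,S_n$ achieving a covering density greater than $1-\varepsilon/2$. Let $E=\bigcup_i\bigl(S_i^{-1}S_i\setminus\{1\}\bigr)$, a finite set, and apply the lemma to the closed null set $C=\bigcup_{h\in E}\mathrm{Fix}(h)$, where $\mathrm{Fix}(h)=\{x\in X:hx=x\}$, to obtain a clopen $U\supseteq C$ with $\sup_\mu\mu(U)<\varepsilon/2$. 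Since $X\setminus U$ is compact and disjoint from every $\mathrm{Fix}(h)$, a standard separation argument refines it into a clopen partition all of whose pieces $W$ satisfy $hW\cap W=\emptyset$ for every $h\in E$; consequently any clopen base $V$ contained in such a piece automatically has pairwise disjoint levels under each $S_i$. I would then run the maximal-castle argument of Kerr--Szab\'o, but restricting all tower bases to be (fine enough) clopen subsets of $X\setminus U$: take a maximal clopen castle $\mathcal C$ with shapes among the $S_i$ and bases of this form. Maximality combined with the quasitiling/averaging argument of the free case --- now applied on $X\setminus U$, where levels are disjoint exactly as in the free setting --- forces $\sup_\mu\mu\bigl((X\setminus U)\setminus\bigcup\mathcal C\bigr)<\varepsilon/2$; adding $\sup_\mu\mu(U)<\varepsilon/2$ bounds the total uncovered measure by $\varepsilon$, as required.

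The main obstacle is ensuring that the excision is compatible with the covering step: one must bound the uncovered measure \emph{uniformly} over all $\mu\in M_G(X)$ simultaneously, which is exactly what the weak$^*$-compactness lemma buys, while checking that confining the bases to the non-invariant clopen set $X\setminus U$ does not degrade the Ornstein--Weiss covering estimate by more than the budgeted $\varepsilon/2$. A related difficulty, and the reason one cannot simply pass to the free part or to a subgroup, is that essential freeness is not inherited by restrictions to subgroups (as noted in the introduction), so every estimate must be carried out globally against $M_G(X)$; the bookkeeping of which fixed-point sets $\mathrm{Fix}(h)$, $h\in E$, must be removed for a given family of shapes is the source of the tedious case analysis alluded to above.
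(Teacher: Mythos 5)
Your excision mechanism is sound, and it is in fact the same device the paper uses: essential freeness gives $\overline{D}\big(\bigcup_{h\in E}\mathrm{Fix}(h)\big)=0$ by \eqref{eq densities suprema infima}, and then \eqref{eq upper dens lim of nbds} together with zero-dimensionality produces clopen neighbourhoods of the non-free locus of arbitrarily small upper Banach density (your weak$^*$-compactness argument proves the same lemma). The genuine gap is the covering step. You assert that ``maximality combined with the quasitiling/averaging argument of the free case'' forces $\sup_\mu\mu\big((X\setminus U)\setminus\bigcup\mathcal{C}\big)<\varepsilon/2$, but this is precisely the theorem to be proved, and it does not follow from maximality of a single castle: maximality of $\mathcal{C}$ only tells you that for every uncovered point $x\in X\setminus U$ and every shape $S_i$, the set $S_i\cdot x$ meets the footprint in at least one point --- a $1/\abs{S_i}$-fraction estimate, not a $(1-\varepsilon/2)$ one. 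Nor can the Ornstein--Weiss quasitiling theorem be quoted as a black box: it tiles the \emph{group}, and transferring such tilings to clopen castles in $X$ whose uncovered set is small \emph{simultaneously for all} $\mu\in M_G(X)$ is exactly the content of the free-case theorem of Kerr--Szab\'o you want to invoke. Moreover, $X\setminus U$ is not $G$-invariant, so there is no ``free action on $X\setminus U$'' to which that theorem could be applied even in principle; the argument has to be rerun globally on $X$, tracking the excised sets through every estimate.

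What the paper actually does, and what any completion of your sketch would have to reproduce, is a multi-stage density-increment iteration: fix nested F{\o}lner sets $F_1\subseteq\cdots\subseteq F_n$ with $(1-\varepsilon(1+\beta))^n<\varepsilon$ and build castles $\mathcal{C}_1,\dots,\mathcal{C}_n$ recursively. Each stage uses \autoref{lem 3rd in afinmeas} --- an exhaustion over all $(1-\varepsilon)$-large subsets $T\subseteq S$, which is strictly stronger than bare maximality --- to get the local estimate that at least $\varepsilon\abs{S}$ of the $S$-orbit of every point outside the excised set is covered; \autoref{lem second in essfree are afinmeas} then converts this into a geometric increment of the lower Banach density of the accumulated footprint, but only under an almost-invariance hypothesis on that footprint, which must itself be verified at every stage via \autoref{lem first in essfree are afinmeas} --- and it is inside these two lemmas, in the F{\o}lner averaging, that the fixed-point sets resurface and must be excised again as closed sets of zero upper density. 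Two consequences of this structure that your sketch misses: the excision must be adapted to each stage's shape, with upper density below $(1-\alpha)\varepsilon/\abs{F_{n-k+1}}$, because the averaging multiplies the excised density by the cardinality of the shape (your uniform bound $\sup_\mu\mu(U)<\varepsilon/2$ is therefore far too weak, though this is easily repaired); and the uniformity over all invariant measures in the covering estimate is achieved through Banach densities and F{\o}lner averaging, not through the weak$^*$-compactness lemma, which only handles the excision.
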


As a consequence, we obtain a generalization of \cite[Theorem A]{KerSza_almost_2020}:

\begin{thmintro}\label{thmintro: equivalence}
Let $G\curvearrowright X$ be an essentially free action of a discrete amenable group on a compact metrizable space. Then $G\curvearrowright X$ has the small boundary property if and only if it is almost finite in measure. Moreover, the following are equivalent:

\begin{enumerate}

\item the action is almost finite,

\item the action has the small boundary property and comparison.

\end{enumerate}
\end{thmintro}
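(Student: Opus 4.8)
The plan is to deduce everything from the zero-dimensional base case \autoref{thmintro: AF in measure}, following the strategy of Kerr and Szab\'o \cite{KerSza_almost_2020} while tracking the measures of fixed-point sets. First I would dispose of the ``moreover'' part: one has the decomposition that an action is almost finite if and only if it is almost finite in measure and has comparison. The proof of this decomposition for free actions in \cite{KerSza_almost_2020} carries over to essentially free actions, because the castle manipulations it uses interact with the sets $\{x\colon g\cdot x=x\}$ only through their measures, and these vanish for every invariant measure by essential freeness. Granting this together with the first equivalence, part (2) follows by substituting the small boundary property for almost finiteness in measure.

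It therefore remains to prove that an essentially free action has the small boundary property if and only if it is almost finite in measure, and I would treat the two implications separately. For the forward direction, the idea is to reduce to \autoref{thmintro: AF in measure} through a measure-faithful zero-dimensional extension. Using the small boundary property, realize a separating sequence of sets with null boundary (null for every invariant measure) as clopen sets of an inverse limit; this produces a zero-dimensional $G$-space $Y$ and a factor map $\pi\colon Y\to X$ that is injective off a set which is null for every invariant measure on $Y$. The key point is that essential freeness \emph{pulls back} along $\pi$: if $g\cdot y=y$ then $g\cdot\pi(y)=\pi(y)$, so $\{y\colon g\cdot y=y\}\subseteq\pi^{-1}(\{x\colon g\cdot x=x\})$, and since every invariant measure on $Y$ pushes forward to an invariant measure on $X$, the fixed-point sets in $Y$ are null as well. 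Hence \autoref{thmintro: AF in measure} applies to $Y$ and yields clopen castles with remainder of arbitrarily small measure, uniformly over invariant measures. Pushing their clopen bases forward through $\pi$ gives subsets of $X$ with null boundary whose levels, by measure-faithfulness of $\pi$, are pairwise disjoint and cover $X$ up to a set of arbitrarily small measure uniformly over invariant measures; this is exactly almost finiteness in measure for $X$.

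For the converse, almost finiteness in measure $\Rightarrow$ small boundary property, I would extract a separating family of null-boundary sets from the open castles supplied by almost finiteness in measure. Using that essential freeness makes the orbit relation effectively free on a conull set, one shrinks the tower levels so that their boundaries have arbitrarily small measure uniformly over invariant measures, and combines them with the small remainder to separate any prescribed pair of points by a set whose boundary is null for every invariant measure. This is the small boundary property.

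The main obstacle is precisely the bookkeeping of fixed-point sets. Unlike freeness, essential freeness is not inherited by restrictions to subgroups, so every step that in \cite{KerSza_almost_2020} exploits the emptiness of $\{x\colon g\cdot x=x\}$ must here be replaced by a quantitative estimate of its measure against all invariant measures; the hardest place for this is the zero-dimensional input \autoref{thmintro: AF in measure} itself, but once that is available the extension argument above is clean, since -- in pleasant contrast to the subgroup situation -- essential freeness is well behaved under factor maps.
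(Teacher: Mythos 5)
Your proposal is correct and takes essentially the same route as the paper: there, the statement is proved as \autoref{cor ess free sbp iff afinmeas} and \autoref{cor ess free af iff sbp and comparison}, whose proofs are declared to be entirely analogous to those of Theorem~5.6 and Theorem~A of \cite{KerSza_almost_2020} with the zero-dimensional input replaced by \autoref{thm ess free actions on zero dim spaces are afinmeas}---exactly your plan, including the key observation that essential freeness passes to the zero-dimensional, measure-faithful extension because invariant measures push forward and fixed-point sets pull back. (The only imprecision is that the pushed-down levels are open and pairwise disjoint merely up to sets that are null for every invariant measure, so one must pass to interiors and invoke minimality to obtain a genuine open castle; this is part of the standard Kerr--Szab\'o manipulations the paper leaves implicit.)
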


Note that \autoref{thmintro: equivalence} does not hold beyond the essentially free setting. The allosteric actions constructed in \cite{Jos23} have the small boundary property and comparison, but are not almost finite (in measure).

Extending other results from free actions to essentially free ones seems to be rather challenging. For example, it is known (see \cite[Theorem B]{KerSza_almost_2020}) that all free actions of a given group on finite-dimensional spaces are almost finite if and only if all its free actions on zero-dimensional spaces are almost finite. The proof of this result crucially relies on the fact that free actions on finite-dimensional spaces have Lindenstrauss' \emph{topological small boundary property}. Since this is expected to fail for essentially free actions, it is very unclear how to proceed. 
Nevertheless, we do prove that such actions still have the usual small boundary property.
	
\begin{thmintro}\label{thmintro: FD SBP}
Let $G \curvearrowright X$ be an essentially free action of a discrete amenable group on a compact metrizable finite-dimensional space. Then it has the small boundary property.
\end{thmintro}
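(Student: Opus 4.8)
The plan is to reduce the statement to almost finiteness in measure and then to the zero-dimensional case. Since $G\curvearrowright X$ is essentially free, \autoref{thmintro: equivalence} tells us that the small boundary property (\autoref{def sbp}) is equivalent to almost finiteness in measure (\autoref{def almost finite in measure}), so it suffices to prove that $G\curvearrowright X$ is almost finite in measure.

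First I would pass to a zero-dimensional extension. As $X$ is compact metrizable, there is a continuous surjection $q\colon \mathcal{C}\to X$ from the Cantor set $\mathcal{C}$; letting $G$ act on $\mathcal{C}^G$ by the shift and writing $\iota\colon X\to X^G$, $\iota(x)=(g^{-1}x)_g$, for the equivariant orbit embedding, the set $Z=\{\eta\in\mathcal{C}^G : (q(\eta(g)))_{g\in G}\in\iota(X)\}$ is a closed shift-invariant, hence compact metrizable zero-dimensional, $G$-space, and $\eta\mapsto \iota^{-1}((q(\eta(g)))_g)$ is an equivariant surjection $\pi\colon Z\to X$. The crucial point is that essential freeness is \emph{inherited} by any such extension: if $g\in G\setminus\{1\}$ and $z\in Z$ satisfy $gz=z$, then $g\pi(z)=\pi(gz)=\pi(z)$, so $\{z\in Z : gz=z\}\subseteq \pi^{-1}(\{x\in X : gx=x\})$; since $\pi$ is equivariant, every $G$-invariant Borel probability measure $\nu$ on $Z$ pushes forward to a $G$-invariant measure $\pi_*\nu$ on $X$, whence $\nu(\{z : gz=z\})\le (\pi_*\nu)(\{x : gx=x\})=0$ by essential freeness of $G\curvearrowright X$. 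Thus $G\curvearrowright Z$ is essentially free on a zero-dimensional space, and \autoref{thmintro: AF in measure} shows it is almost finite in measure.

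It remains to transfer almost finiteness in measure from $Z$ down to $X$; this is the heart of the argument, and the step where finite-dimensionality of $X$ is indispensable. Indeed, since every action admits a zero-dimensional extension and essential freeness always lifts, a dimension-free transfer would wrongly force \emph{every} essentially free action to have the small boundary property, contradicting the infinite-dimensional counterexamples of \cite{GioKer_subshifts_2010}. Fix a finite $F\subseteq G$ and $\epsilon>0$, and let $M_G$ denote the set of $G$-invariant Borel probability measures on $X$. On $Z$ one obtains a castle with $(F,\epsilon)$-invariant shapes $S_1,\dots,S_n\subseteq G$ and clopen bases $B_1,\dots,B_n\subseteq Z$ whose levels $\{sB_i : 1\le i\le n,\ s\in S_i\}$ are pairwise disjoint with $\nu(Z\setminus\bigsqcup_{i,s} sB_i)<\epsilon$ for all invariant $\nu$. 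The difficulty is that the images $\pi(sB_i)$, although compact, need no longer be disjoint: for clopen $C\subseteq Z$ one only has $\partial_X\pi(C)\subseteq \beta(C):=\pi(C)\cap\pi(Z\setminus C)$, the set of points of $X$ whose $\pi$-fibre is split by $C$. The plan is to replace the bases by the open sets $\operatorname{int}_X\pi(B_i)$ and shrink them slightly to restore disjointness of the levels, while arranging that the total measure discarded—controlled by the ``fibrewise boundaries'' $\beta(sB_i)$—stays below $\epsilon$ uniformly over $M_G$. This is precisely where $\dim X=d<\infty$ enters: choosing the castle subordinate to a finite open cover of $X$ of order $\le d+1$ and running the finite-dimensional interpolation scheme of \cite{KerSza_almost_2020} (with the clopen castles on $Z$ substituting for the marker property used there in the free case), one forces the fibrewise boundaries to be carried by level sets of finitely many continuous functions whose measures can be made simultaneously small for all measures in $M_G$.

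The main obstacle is exactly this last uniform estimate. Unlike in the free case, essential freeness does not supply the marker property on $X$ itself, so one cannot tile $X$ directly; one must instead extract a usable tiling structure from the zero-dimensional extension and simultaneously keep every boundary contribution null (or arbitrarily small) with respect to \emph{every} invariant measure at once, using finite-dimensionality to bound the number and measure of the ``split-fibre'' directions. Once an open castle on $X$ with remainder $<\epsilon$ has been produced for every $(F,\epsilon)$, almost finiteness in measure of $G\curvearrowright X$ follows, and hence so does the small boundary property by \autoref{thmintro: equivalence}.
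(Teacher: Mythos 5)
Your reduction to almost finiteness in measure via \autoref{thmintro: equivalence} is legitimate, and your steps through the zero-dimensional extension are correct: the Cantor-system extension $\pi\colon Z\to X$ is a standard construction, essential freeness does lift to extensions exactly as you argue, and \autoref{thmintro: AF in measure} then applies to $G\curvearrowright Z$. The problem is the final step, which you yourself flag as ``the heart of the argument'': what you offer there is a plan, not a proof. Pushing a clopen castle on $Z$ down to $X$ destroys disjointness of the levels, and the scheme you describe for repairing this --- choosing the castle subordinate to an open cover of order $\le d+1$ and ``running the finite-dimensional interpolation scheme of \cite{KerSza_almost_2020}'' so that the fibrewise boundaries are carried by level sets of functions whose measures are simultaneously small for all measures in $M_G(X)$ --- does not correspond to any lemma in that paper or in this one, and no argument is given for why such a choice is possible or why the resulting estimate is uniform over all invariant measures. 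That uniform smallness over $M_G(X)$ is exactly the hard content of the theorem, so your proposal reduces the statement to a transfer problem at least as difficult as the statement itself and then leaves it unsolved. Your own observation that a dimension-free transfer is impossible, by \cite{GioKer_subshifts_2010}, shows how much weight this unproven step must bear; note also that factor maps can destroy the small boundary property completely in general (the identity action on the Cantor set factors onto the identity action on $[0,1]$, which fails the property), so any transfer argument must use essential freeness and finite-dimensionality in an essential, quantitative way.

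For comparison, the paper's proof of \autoref{thm ess free actions on fin dim spaces have sbp} never leaves $X$ and needs no extension: given open sets $U\subseteq\overline{U}\subseteq V$ and $\varepsilon>0$, it chooses a finite set $F\subseteq G$ with $\left|F\right|>2\dim(X)/\varepsilon$, uses essential freeness together with \eqref{eq upper dens lim of nbds} to enclose the non-free part $X^F_{\mathrm{nf}}$ in an open set $B$ with $\overline{D}(\overline{B})<\varepsilon/2$, and then applies Szab\'o's general position lemma (\autoref{lem boundaries in gp}, from \cite{Sza15diss}) on the free part to produce an open set $W$ with $\overline{U}\cap(X\setminus B)\subseteq W\subseteq\overline{W}\subseteq V\cap X^F_{\mathrm{free}}$ whose translated boundaries $(g\cdot\partial W)_{g\in F}$ are in general position. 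Finite-dimensionality then gives $\overline{D}(\partial W)\le\dim(X)/\left|F\right|<\varepsilon/2$ by \autoref{rmk general position upper ban dens}, and $U'=W\cup(A\cap B)$ verifies the criterion of \cite[Theorem~5.5]{KerSza_almost_2020}. If you wish to salvage your approach, the missing transfer step is where all the work lies, and you should expect to need precisely this general-position technology to carry it out.
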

	
One particularly nice class of groups to consider is that of finitely generated groups of polynomial growth. As mentioned above, their minimal actions can never be allosteric, and they are also known to always have comparison by the main result of \cite{Nar22}. Thus, we obtain the following characterization, generalizing the results in \cite{OrtSca_almost_2023} and \cite{LiMa23}. 

\begin{corintro}\label{corintro: polygrowth}
Let $G$ be a finitely generated group of polynomial growth, and let $G  \curvearrowright X$ be an action on a compact metrizable space with the small boundary property. Then $C(X)\rtimes G$ is classifiable if and only if $G \curvearrowright X$ is minimal and topologically free. 
\end{corintro}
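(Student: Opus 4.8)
The two implications are of very different nature, and only the converse uses the standing hypotheses on $G$ and on the action. The forward implication is the necessity already recorded in the introduction: a classifiable crossed product is in particular simple, and by the theorem of Archbold and Spielberg \cite{ArcSpi94} simplicity of $C(X)\rtimes G$ is equivalent to $G\curvearrowright X$ being minimal and topologically free. So if $C(X)\rtimes G$ is classifiable, then the action is minimal and topologically free, and neither the polynomial growth nor the small boundary property is needed here.

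For the converse I would first dispose of every defining property of classifiability except $\mathcal{Z}$-stability. Since $G$ has polynomial growth it is amenable, hence so is the action $G\curvearrowright X$; consequently $C(X)\rtimes G$ is nuclear and, by \cite{Tu99}, satisfies the UCT. It is automatically unital and separable, and by \cite{ArcSpi94} minimality together with topological freeness makes it simple. Thus everything reduces to establishing $\mathcal{Z}$-stability.

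The plan for $\mathcal{Z}$-stability is to route through almost finiteness. The first step is to upgrade topological freeness to essential freeness, which is legitimate precisely in our class of groups: for minimal actions of finitely generated groups of polynomial growth the two notions coincide \cite[Corollary~2.4]{Jos21}, so $G\curvearrowright X$ is essentially free. Now the action is essentially free and, by assumption, has the small boundary property, while comparison holds automatically for minimal actions of groups of polynomial growth by the main result of \cite{Nar22}. Feeding essential freeness into \autoref{thmintro: equivalence} and using the equivalence there between almost finiteness and the conjunction of the small boundary property with comparison, I would conclude that $G\curvearrowright X$ is almost finite. Finally, almost finiteness of the (minimal) action yields classifiability, and in particular $\mathcal{Z}$-stability, of the crossed product along the lines of \cite{Ker_dimension_2020}; combined with simplicity, nuclearity and the UCT secured above, this gives that $C(X)\rtimes G$ is classifiable.

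The substantive inputs, namely \autoref{thmintro: equivalence} and the comparison result of \cite{Nar22}, are already available, so the corollary is in essence an assembly of these with \cite{Jos21,ArcSpi94,Tu99}. The one point that requires genuine care, and which I expect to be the main obstacle, is the very last implication. In the literature almost finiteness is shown to force $\mathcal{Z}$-stability of the crossed product under a standing \emph{freeness} assumption, whereas here the action is only essentially free. One must therefore check that the passage from almost finiteness to $\mathcal{Z}$-stability goes through using essential freeness alone; this is consistent with the observation in the introduction that almost finiteness implies essential freeness but never freeness, so the freeness hypothesis in the classical statement should be replaceable by what the dynamics actually provide.
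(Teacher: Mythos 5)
Your proposal is correct and follows the paper's proof essentially verbatim: necessity via \cite{ArcSpi94}, then essential freeness from \cite[Corollary~2.4]{Jos21} (the paper additionally notes Gromov's theorem to justify applicability), comparison from \cite{Nar22}, almost finiteness from \autoref{thmintro: equivalence}, and finally $\mathcal{Z}$-stability from Kerr's theorem. The one step you flag as the main obstacle---that the implication from almost finiteness to $\mathcal{Z}$-stability is stated in the literature under a freeness hypothesis---is resolved in the paper exactly as you anticipate: it invokes \cite[Theorem~12.4]{Ker_dimension_2020} and observes that its proof does not employ freeness.
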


By combining \autoref{thmintro: FD SBP} and \autoref{corintro: polygrowth} we recover a recent result of Hirshberg-Wu \cite{HirWu23}: for an action $G\curvearrowright X$ of a finitely generated group with polynomial growth on a finite-dimensional compact space, the crossed product $C(X)\rtimes G$ is classifiable if and only if the action is topologically free and minimal. 

Although some of the results above are stated for \emph{topologically} free actions, the machinery we have developed is only suitable to deal with \emph{essentially} free actions. In other words, none of the results mentioned so far apply to allosteric actions (that is, minimal actions that are topologically free but not essentially free), and in particular not to the examples constructed by Joseph \cite{Jos23}. It was left as an open question in his work to  determine whether the crossed products of his actions are classifiable.

In the last section of this work, we analyze Joseph's construction and develop ad-hoc methods to obtain $\mathcal{Z}$-stability for the crossed products of his actions:

\begin{thmintro}\label{thm:Joseph}
Let $G\curvearrowright X$ be one of the allosteric actions of amenable groups constructed in \cite{Jos23}. Then $C(X)\rtimes G$ is $\mathcal{Z}$-stable and thus classifiable.
\end{thmintro}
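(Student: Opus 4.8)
The actions under consideration are allosteric, hence topologically free but not essentially free, so by \autoref{rem:EssFreeAlmostFinite} they cannot be almost finite (in measure) and none of \autoref{thmintro: AF in measure}--\autoref{thmintro: FD SBP} applies. The plan is therefore to prove $\mathcal{Z}$-stability of $C(X)\rtimes G$ directly, reducing it to two tracial properties: \emph{strict comparison} and \emph{uniform property Gamma}. Once both are in place, $\mathcal{Z}$-stability follows from the abstract criterion of Castillejos--Evington--Tikuisis--White--Winter for simple, separable, unital, nuclear, stably finite \cas{} with nonempty trace space; classifiability is then immediate, since simplicity, separability, unitality and nuclearity are guaranteed by minimality, topological freeness and amenability, and the UCT holds by Tu's theorem.

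First I would unpack Joseph's construction and exhibit an explicit model for the crossed product. Concretely these actions are (generalized) odometers: $X=\dirlim G/H_n$ for a decreasing chain of finite-index subgroups $H_n\le G$, the allosteric behaviour being engineered at the level of the stabilizer invariant random subgroup, so that for a suitable $g\neq 1$ the density in $G/H_n$ of cosets $kH_n$ with $k^{-1}gk\in H_n$ stays bounded away from zero (failure of essential freeness) while all fixed-point sets remain nowhere dense (topological freeness). Green imprimitivity identifies each finite stage as $C(G/H_n)\rtimes G\cong M_{[G:H_n]}\!\big(C^*(H_n)\big)$, so that
\[
C(X)\rtimes G\;\cong\;\dirlim\, M_{[G:H_n]}\!\big(C^*(H_n)\big),
\]
a ``generalized Bunce--Deddens'' algebra whose matrix sizes $[G:H_n]$ tend to infinity. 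The point of this picture is that the blocks $M_{[G:H_n]}$ furnish approximately central matrix subalgebras of unboundedly large size; combined with amenability of $G$ (which makes the tracial von Neumann completions $\pi_\tau\big(C(X)\rtimes G\big)''$ hyperfinite II$_1$ factors, hence of property Gamma), this should yield uniform property Gamma, the easy half of the argument.

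The hard part, and the main obstacle, is strict comparison, which I would deduce from dynamical comparison of $G\curvearrowright X$. The standard proofs of dynamical comparison use freeness to convert a measure estimate $\mu(U)<\mu(V)$ into an honest subequivalence by spreading $U$ over almost-disjoint translates inside $V$; here this breaks down precisely because a set of positive measure carries nontrivial stabilizers. The ad-hoc input is to control this non-free part using the explicit finite models: one tiles $X$ by the clopen cylinders of the inverse limit, uses amenability (F\o lner sets sitting inside the coset structure) to redistribute mass, and absorbs the stabilized portion by a direct counting estimate in $G/H_n$ -- the very density that quantifies the failure of essential freeness is now harnessed to \emph{bound} an error term rather than to obstruct the comparison. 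I expect the careful bookkeeping of fixed-point sets and their measures, exactly as developed for the essentially free results earlier in the paper, to be the technically demanding step.

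Finally, with strict comparison and uniform property Gamma established, $\mathcal{Z}$-stability follows from the cited criterion. Equivalently, one can argue tracial $\mathcal{Z}$-absorption by hand: the blocks $M_{[G:H_n]}$ provide order-zero maps from matrix algebras of unboundedly large size which are approximately central because they sit deep in the inductive limit, and strict comparison is exactly what is needed to dominate the defect $1-\phi(1)$ by any prescribed nonzero positive element. Either way the crux is the comparison estimate for the non-essentially-free part, and everything else is either structural or inherited from amenability.
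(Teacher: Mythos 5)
Your proposal mislocates the difficulty and leaves the two pillars of your strategy unsupported. First, comparison: dynamical comparison for Joseph's actions is not an open problem to be attacked with new tiling arguments --- it is already established in \cite[Lemma~2.4]{Jos23} (these are profinite actions), and the paper simply cites it; indeed, the paper points out that these actions have the small boundary property \emph{and} comparison, yet fail to be almost finite (in measure) by \autoref{rem:EssFreeAlmostFinite}, which is exactly why the essentially-free machinery gives nothing here. Moreover, even granting dynamical comparison, your route needs \emph{strict comparison} of $C(X)\rtimes G$ for arbitrary positive elements of the crossed product; the available reduction (\cite[Lemma~7.9]{Phi14} combined with \autoref{prop:DynCompCuntz}) only allows one to replace the \emph{target} element $a$ by a positive element of $C(X)$, which suffices for the Hirshberg--Orovitz tracial criterion but falls well short of strict comparison. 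Second, uniform property Gamma: your argument conflates property Gamma of the individual tracial von Neumann completions $\pi_\tau(C(X)\rtimes G)''$ (which does follow from hyperfiniteness) with \emph{uniform} property Gamma of the $C^*$-algebra, a strictly stronger, uniform-over-traces statement; if the former implied the latter, then by \cite{CasEviTikWhiWin_nuclear_2019} strict comparison would imply $\mathcal{Z}$-stability for all simple separable nuclear $C^*$-algebras, resolving the open implication of the Toms--Winter conjecture. This gap is aggravated by a tracial subtlety you gloss over: because the action is \emph{not} essentially free, unique ergodicity (\cite[Lemma~3.1]{Jos23}) does not give a unique trace --- there can be traces supported on the positive-measure fixed-point sets, and these are precisely where the non-free difficulty lives.

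Your fallback paragraph (tracial $\mathcal{Z}$-absorption via order-zero maps from matrix algebras) is in fact the paper's actual strategy, via \cite[Theorem~4.1]{HirOro13} and \autoref{df:trZst}, but the justification you offer --- that the blocks $M_{[G:H_n]}$ are ``approximately central because they sit deep in the inductive limit'' --- is a non-argument: position in an inductive limit confers no commutation properties whatsoever, and producing approximately central order-zero maps is exactly the hard core of the problem. What the paper does, in \autoref{thm allosteric actions Z-stable}, is build Rokhlin-type towers purely along the free abelian direction of $\Gamma=\mathbb{Z}^d\wr\Lambda$: the tower levels are translates of a base $W\subseteq X_E$ by multiples of $P\xi_1^{\varphi(t)}$, partitioned into $n$ groups via the Chinese Remainder Theorem using the pairwise distinct primes $p_\gamma$ of \autoref{con Matthieu's examples}. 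By design these towers commute \emph{exactly} with $C(X_F)$ (since $P\xi$ acts trivially on $X_F$) and with the generators $\xi_k^1$, and they commute \emph{approximately} with $\Lambda_0$ thanks to Weiss's almost-equivariant sections of $\Lambda\to\Lambda/\Lambda_E$ (\autoref{AlmostEquivariantLiftLemma}, via \autoref{cor: cor to con and section lemma}) together with the ``wedding cake'' cutoff function of \autoref{WeddingCakeLemmaFinite}, which kills the bad cosets. Only then do unique ergodicity and Joseph's dynamical comparison enter, to dominate the defect $1-\rho(1)$ by the prescribed element $a$. So while your last paragraph names the right abstract criterion, the construction that makes it applicable --- towers invariant under the non-free directions and almost invariant under $\Lambda$ --- is entirely missing from your proposal.
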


Our arguments to prove \Cref{thm:Joseph}, although designed to work specifically for the actions at hand, may very well be the first step towards producing a version of almost finiteness that is suitable for topologically free actions. Indeed, our proof of \Cref{thm:Joseph} relies on the construction of Rokhlin-type towers along the ``free'' direction of the action, that are moreover invariant with respect to the ``non-free'' direction, which is possible thanks to the specific form of the actions considered in Joseph's construction. It is conceivable that a more refined version of our methods can be applied in more general contexts.


	\section{Preliminaries}\label{sec prelim}

	\subsection{F{\o}lner sequences}
	
	Given a group $G$, a finite subset $K\subseteq G$, and $\varepsilon>0$, we say that a finite subset $F\subseteq G$ is $(K,\varepsilon)$-\textit{invariant} if $\frac{\left|K\cdot F\triangle F\right|}{\left|F\right|}<\varepsilon$. Amenability of a group $G$ is equivalent to the existence of a $(K,\varepsilon)$-invariant subset for 
	every $K$ and $\varepsilon$. When $G$ is countable, this is in turn equivalent to the existence of a sequence 
	$(F_n)_{n\in\mathbb{N}}$ of finite subsets $F_n\subseteq G$ such that, for any finite subset $K\subseteq G$ and any $\varepsilon>0$, there exists $n_0\in\N$ so that $F_n$ is $(K,\varepsilon)$-invariant for all $n\geq n_0$. We call such a sequence
	a \textit{F{\o}lner sequence} for $G$.
	
	The following observation is straightforward, and we isolate it for later use.
	
\begin{rem}\label{rmk Folner}
Let $(F_n)_{n\in\mathbb{N}}$ be a F{\o}lner sequence for a countably infinite group $G$.
\begin{enumerate}
\item We have $\lim_{n\to\infty}\left|F_n\right|=\infty$.

\item For any finite subset $K\subseteq G$, the sequence $(F_n\cup K)_{n\in\mathbb{N}}$ is also a F{\o}lner sequence for $G$.
\end{enumerate}
\end{rem}

The following is \cite[Lemma~3.12]{KerSza_almost_2020}.

\begin{lma}\label{lem invariance}
Let $K$ be a finite subset of a group $G$, and let $\delta>0$ be given. Then there exists $\varepsilon>0$ such that, if $F\subseteq G$ is a finite subset which is $(K,\varepsilon)$-invariant, then any $F'\subseteq F$ with $\left|F'\right|\ge(1-\varepsilon)\left|F\right|$ is $(K,\delta)$-invariant.
\end{lma}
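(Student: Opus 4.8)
The plan is to prove this directly, by showing that for a suitable $\varepsilon$ depending only on $|K|$ and $\delta$, the ``boundary'' $K\cdot F'\triangle F'$ of the large subset $F'$ is bounded by a constant multiple of $\varepsilon|F|$, after which I divide by $|F'|\ge(1-\varepsilon)|F|$. Throughout I write $m=|K|$ and use that $(K,\varepsilon)$-invariance of $F$ gives $|K\cdot F\triangle F|<\varepsilon|F|$, while the hypothesis gives $|F\setminus F'|\le\varepsilon|F|$. I would decompose the symmetric difference as $K\cdot F'\triangle F'=(K\cdot F'\setminus F')\cup(F'\setminus K\cdot F')$ and bound the two pieces separately, the governing observation being that $F'\subseteq F$ forces $K\cdot F'\subseteq K\cdot F$.

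For the first piece, since $K\cdot F'\subseteq K\cdot F$ I have $K\cdot F'\setminus F'\subseteq K\cdot F\setminus F'$. Sorting the latter according to membership in $F$ yields $K\cdot F\setminus F'\subseteq(K\cdot F\setminus F)\cup(F\setminus F')$, so that $|K\cdot F'\setminus F'|\le|K\cdot F\setminus F|+|F\setminus F'|\le 2\varepsilon|F|$.

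The second piece is where the real work lies, and I expect it to be the main obstacle: shrinking $F$ to $F'$ also shrinks the translate $K\cdot F'$, which could in principle create many new ``missing'' elements of $F'$ that were not missing from $F$. The key point is that every such new element can be traced back to a deleted point of $F$. Concretely, I split $F'\setminus K\cdot F'$ according to membership in $K\cdot F$. The part outside $K\cdot F$ satisfies $(F'\setminus K\cdot F')\setminus K\cdot F\subseteq F\setminus K\cdot F$, of size at most $\varepsilon|F|$ since $F\setminus K\cdot F\subseteq K\cdot F\triangle F$. For the part inside $K\cdot F$, take $f'\in F'$ with $f'\in K\cdot F$ but $f'\notin K\cdot F'$, and write $f'=kf$ with $k\in K$ and $f\in F$; the condition $f'\notin K\cdot F'$ forces $f\notin F'$, hence $f'\in K\cdot(F\setminus F')$. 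This part therefore has size at most $|K\cdot(F\setminus F')|\le m\,|F\setminus F'|\le m\varepsilon|F|$, giving $|F'\setminus K\cdot F'|\le(m+1)\varepsilon|F|$.

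Combining the two estimates yields $|K\cdot F'\triangle F'|\le(m+3)\varepsilon|F|$. Since $|F'|\ge(1-\varepsilon)|F|$, dividing gives $\tfrac{|K\cdot F'\triangle F'|}{|F'|}\le\tfrac{(m+3)\varepsilon}{1-\varepsilon}$, and I would finish by choosing $\varepsilon$ small enough that the right-hand side is $<\delta$; any $\varepsilon<\tfrac{\delta}{m+3+\delta}$ works (and taking $\varepsilon<1$ also ensures $|F'|>0$ when $F\neq\emptyset$). As this $\varepsilon$ depends only on $m=|K|$ and $\delta$, the lemma follows.
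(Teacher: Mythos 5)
Your argument is correct. Note first that the paper does not actually prove this lemma: it is quoted verbatim from Kerr--Szab\'o (\cite[Lemma~3.12]{KerSza_almost_2020}), so there is no in-paper proof to compare against; your write-up serves as a self-contained substitute, and it checks out. The decomposition into $K\cdot F'\setminus F'$ and $F'\setminus K\cdot F'$ is handled properly, and the step you rightly flag as the crux --- controlling $F'\setminus K\cdot F'$ --- is resolved correctly: for $f'\in F'\cap (K\cdot F)$ with $f'\notin K\cdot F'$, \emph{every} representation $f'=kf$ with $k\in K$, $f\in F$ must have $f\notin F'$ (otherwise $f'\in K\cdot F'$), so $f'\in K\cdot(F\setminus F')$, giving the bound $|K|\,\varepsilon|F|$ for that part. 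The remaining pieces are each at most $\varepsilon|F|$ (using $K\cdot F\setminus F\subseteq K\cdot F\triangle F$ and $F\setminus K\cdot F\subseteq K\cdot F\triangle F$), so $|K\cdot F'\triangle F'|\le (|K|+3)\varepsilon|F|$, and dividing by $|F'|\ge(1-\varepsilon)|F|$ and choosing $\varepsilon<\delta/(|K|+3+\delta)$ closes the argument, with $\varepsilon$ depending only on $|K|$ and $\delta$ as required. The constant $|K|+3$ is not optimal (the two ``$\varepsilon|F|$'' contributions from $K\cdot F\triangle F$ are parts of a single set and could be counted once), but optimality is irrelevant here; the proof is complete as written.
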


\subsection{Banach densities}
Let $G$ be a discrete group acting on a compact metrizable space $X$. The set of $G$-invariant Borel probability measures on $X$ will be denoted by $M_G(X)$. Note that $M_G(X)$ is a weak$^\ast$-compact, convex subset, which is non-empty whenever the acting group $G$ is amenable.

Next, we recall the definitions of Banach densities and some basic facts about them. For the proofs, we refer to \cite[Section 3]{KerSza_almost_2020}. 

\begin{df}
Let $G\curvearrowright X$ be an action of a discrete group $G$ on a topological space $X$. For a nonempty finite subset $F\subseteq G$ and a set $A\subseteq X$, define
\[
\underline{D}_F(A)\coloneqq\inf_{x\in X}\frac{1}{\left|F\right|}\sum_{t\in F}\mathbbm{1}_A(tx)=\inf_{x\in X}\frac{\left|\{t\in F\colon tx\in A\}\right|}{\left|F\right|},\]
and
\[
\overline{D}^F(A)\coloneqq \sup_{x\in X}\frac{1}{\left|F\right|}\sum_{t\in F}\mathbbm{1}_A(tx)=\sup_{x\in X}\frac{\left|\{t\in F\colon tx\in A\}\right|}{\left|F\right|}.\]
We define the \textit{lower} and \textit{upper Banach densities} of $A$, respectively, as

\[
\underline{D}(A)\coloneqq \sup_{F\subseteq G}\underline{D}_F(A), \ \ \ \text{ and }\ \ \ \overline{D}(A)\coloneqq \inf_{F\subseteq G}\overline{D}^F(A),\]
where $F$ ranges over the non-empty finite subsets of $G$ in both cases.
\end{df}

If $G$ is amenable and countable and $(F_n)_{n\in\mathbb{N}}$ is a F{\o}lner sequence in $G$, one has

\begin{equation}\label{eq densities Folner sets}\tag{2.1}
\underline{D}(A)=\lim_{n\to\infty}\underline{D}_{F_n}(A) \ \ \ \text{ and } \ \ \ \overline{D}(A)=\lim_{n\to\infty}\overline{D}^{F_n}(A).
\end{equation}

As a consequence (see \cite[Proposition~3.3]{KerSza_almost_2020}), it follows that for any open subset $U\subseteq X$ and any closed subset $C\subseteq X$, one has
\begin{equation}\label{eq densities suprema infima}\tag{2.2}
\underline{D}(U)=\inf_{\mu\in M_G(X)}\mu(U) \ \ \ \text{ and } \ \ \ \overline{D}(C)=\sup_{\mu\in M_G(X)}\mu(C).
\end{equation}

When $X$ is assumed to be metrizable and $d$ is a compatible metric on it, for $\eta>0$ and a subset $A\subseteq X$, we set $A^{<\eta}\coloneqq\{x\in X\colon d(x,A)<\eta\}$. If $A$ is closed, then it is a consequence of the Portmanteau Theorem that
\begin{equation}\label{eq upper dens lim of nbds}\tag{2.3}
\overline{D}(A) = \lim_{\eta \to 0^+}\overline{D}(A^{<\eta}).
\end{equation}
In particular, if $X$ is a zero dimensional space, then we can find a decreasing sequence $(C_n)_{n\in\mathbb{N}}$ of clopen sets in $X$ with $\bigcap_{n\in\mathbb{N}}C_n=A$ and $\overline{D}(A)=\lim_{n\to\infty}\overline{D}(C_n)$.
	
\subsection{Comparison and almost finiteness (in measure)}
We now recall some of the theory of almost finiteness (in measure), as developed by Kerr \cite{Ker_dimension_2020} and Kerr and Szab{\'o} \cite{KerSza_almost_2020}.
We begin with dynamical subequivalence and comparison.

\begin{df}\label{df:comparison}\cite[Definition~3.1 and Definition 3.2]{Ker_dimension_2020}
Let $G\curvearrowright X$ be an action of a discrete group on a compact Hausdorff space. Given subsets $A,B\subseteq X$, we say that $A$ is \emph{dynamically subequivalent} to $B$, and write $A\precsim B$, if for every closed subset $C\subseteq A$ there are a finite open cover $\mathcal{U}$ of $C$ and group elements $g_U\in G$ for $U\in\mathcal{U}$, such that the sets $g_U\cdot U$, for $U\in\mathcal{U}$, are pairwise disjoint and contained in $B$. We say that $G\curvearrowright X$ has \emph{dynamical comparison} if for all subsets $A,B\subseteq X$, the condition $\mu(A)<\mu(B)$ for all $\mu\in M_G(X)$ implies $A\precsim B$.

\end{df}

Next we recall the notion of a castle.

\begin{df}\label{df:castle}\cite[Definition~5.7]{Ker_dimension_2020}
Let $G\curvearrowright X$ be an action of a discrete group on a compact Hausdorff space. A \emph{tower} for the action is a pair $(S,V)$, where $S\subseteq G$ is a finite subset, and $V\subseteq X$ is a subset of $X$, such that the sets $sV$, for $s\in S$ (called the \emph{levels} of the tower), are pairwise disjoint. The set $S$ is called the \emph{shape} of the tower, and we refer to $V$ as the \emph{base} of the tower. A \textit{castle} is a finite collection $\mathcal{C}=\{(S_i,V_i)\}_{i\in I}$ of towers, so that all levels of all towers are pairwise disjoint. The disjoint union $\bigsqcup_{i\in I}S_i V_i$ is called the \emph{footprint} of the castle.

We say that a castle is \emph{open} (respectively, closed) when  all bases of all towers are open (respectively, closed).
\end{df}

The following is \cite[Definition~8.2]{Ker_dimension_2020} and \cite[Definition 3.5]{KerSza_almost_2020}. We assume minimality for convenience when stating the definition of almost finiteness, since this is the only case we will be interested in.

\begin{df}\label{def almost finite in measure}
A minimal action $G\curvearrowright X$ of a discrete group on a compact metric space is said to be \emph{almost finite in measure} if for any finite subset $K\subseteq G$ and any $\delta,\varepsilon>0$, there exists an open castle $\mathcal{C}=\{(S_i,V_i)\}_{i\in I}$ such that

\begin{enumerate}[label=(\roman*)]

\item each level $sV_i$, for $s\in S_i$ and $i\in I$, has diameter at most $\delta$,

\item each shape $S_i$ is $(K,\delta)$-invariant,

\item the footprint of $\mathcal{C}$ has lower Banach density at least $1-\varepsilon$, that is,
\[
\underline{D}\Big(\bigsqcup_{i\in I}S_i\cdot V_i\Big)\ge1-\varepsilon.
\]
\end{enumerate}

We say that $G\curvearrowright X$ is \emph{almost finite} if for any finite subset $K\subseteq G$, any non-empty open subset $B\subseteq X$ and any $\delta>0$, there exists an open castle $\mathcal{C}=\{(S_i,V_i)\}_{i\in I}$ satisfying conditions (i) and (ii) above in addition to
\begin{enumerate}

\item[(iii')] the complement of the footprint of $\mathcal{C}$ is dynamically subequivalent to $B$, that is,
\[
X\setminus \bigsqcup_{i\in I}S_i\cdot V_i\precsim B.
\]

\end{enumerate}
\end{df}

The following has been mentioned in \cite{Jos23}.

\begin{lma}\label{rem:EssFreeAlmostFinite}
Let $G$ be an amenable, countable group acting on a compact metrizable space $X$. If $G\curvearrowright X$ is almost finite in measure, then it is essentially free.
\end{lma}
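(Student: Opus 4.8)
The plan is to prove the contrapositive formulation directly: I will show that for every $g\in G\setminus\{1\}$ and every $\mu\in M_G(X)$, the fixed-point set $C_g:=\{x\in X\colon gx=x\}$ is $\mu$-null. Fix such a $g$ and $\mu$, and observe that $C_g$ is closed, since the action is by homeomorphisms. The idea is to trap $C_g$ inside a small portion of a cleverly chosen open castle and then estimate its measure using $G$-invariance of $\mu$.

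Given $\delta,\varepsilon>0$, I would apply almost finiteness in measure (which includes minimality) with $K=\{g\}$ to obtain an open castle $\mathcal{C}=\{(S_i,V_i)\}_{i\in I}$ whose footprint $U:=\bigsqcup_{i\in I}S_iV_i$ satisfies $\underline{D}(U)\ge 1-\varepsilon$ and all of whose shapes $S_i$ are $(\{g\},\delta)$-invariant. Since $U$ is open (the castle being open and each $s$ acting as a homeomorphism), \eqref{eq densities suprema infima} gives $\mu(U)\ge\underline{D}(U)\ge 1-\varepsilon$, hence $\mu(X\setminus U)\le\varepsilon$.

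The key geometric observation is that $C_g$ avoids every \emph{good} level. Put $S_i':=\{s\in S_i\colon gs\in S_i\}$. If $s\in S_i'$, then $sV_i$ and $(gs)V_i$ are two distinct levels of the tower $(S_i,V_i)$ -- distinct because $gs\ne s$, as left translation by $g\ne 1$ is free -- and hence disjoint; thus for $x\in sV_i$ we have $gx\in(gs)V_i$, which is disjoint from $sV_i\ni x$, forcing $gx\ne x$. Therefore $sV_i\cap C_g=\emptyset$ for all $s\in S_i'$, so that
\[
C_g\cap U\subseteq\bigsqcup_{i\in I}\bigsqcup_{s\in S_i\setminus S_i'}sV_i.
\]
To bound the measure of the right-hand side I would use that \emph{every} element of $G$ preserves $\mu$, so $\mu(sV_i)=\mu(V_i)$ for all $s$, whence $\mu\big(\bigsqcup_{s\in S_i\setminus S_i'}sV_i\big)=|S_i\setminus S_i'|\,\mu(V_i)$. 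Since $S_i\setminus S_i'=S_i\setminus g^{-1}S_i$ is carried bijectively onto $gS_i\setminus S_i$ by left translation by $g$, the $(\{g\},\delta)$-invariance gives $|S_i\setminus S_i'|=|gS_i\setminus S_i|\le|gS_i\triangle S_i|<\delta|S_i|$. Summing over $i$, and using that all levels of the castle are pairwise disjoint with $\mu(sV_i)=\mu(V_i)$, yields $\mu(C_g\cap U)<\delta\sum_{i}|S_i|\,\mu(V_i)=\delta\,\mu(U)\le\delta$. Combining the two estimates, $\mu(C_g)\le\mu(C_g\cap U)+\mu(X\setminus U)<\delta+\varepsilon$, and since $\delta,\varepsilon>0$ were arbitrary we conclude $\mu(C_g)=0$, which is essential freeness.

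I expect the only real subtlety to be bookkeeping rather than a genuine obstacle: one must correctly match the index set $S_i\setminus S_i'$ of ``bad'' levels with the F{\o}lner defect $|gS_i\triangle S_i|$, and one must remember that it is the $G$-invariance of $\mu$ (not merely the tower structure) that forces every level of a tower to carry the same mass. No topological input beyond closedness of $C_g$ and the identity \eqref{eq densities suprema infima} is required.
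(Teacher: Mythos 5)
Your proposal is correct and follows essentially the same route as the paper's proof: both extract an open castle from almost finiteness in measure, observe that the levels indexed by $S_i\cap g^{-1}S_i$ (your $S_i'$) are disjoint from $\mathrm{Fix}(g)$, use the F{\o}lner-type invariance of the shapes to show these good levels nearly exhaust the footprint, and convert the lower Banach density bound into a measure bound via \eqref{eq densities suprema infima}. The only differences are cosmetic bookkeeping: the paper phrases the estimate as $1-\mu(\mathrm{Fix}(g))\ge(1-\varepsilon')^2$ with a single parameter, while you split $\mu(C_g)\le\mu(C_g\cap U)+\mu(X\setminus U)<\delta+\varepsilon$.
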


\begin{proof}
Let $g\in G\setminus\{1\}$ and $\mu\in M_G(X)$ be given. Fix $\varepsilon>0$, and let $\varepsilon'>0$ satisfy $1-(1-\varepsilon')^2=\varepsilon$. Set $\mathrm{Fix}(g)=\{x\in X\colon g\cdot x=x\}$. We will show that $\mu(\mathrm{Fix}(g))<\varepsilon$. Find an open castle $\{(V_i,S_i)\}_{i\in I}$ with shapes that are $(\{g^{-1}\},\varepsilon')$-invariant, and satisfying 
\begin{equation}\label{2.4}\tag{2.4}
\underline{D}\Big(\bigsqcup_{i\in I}S_i\cdot V_i\Big)\ge1-\varepsilon'.
\end{equation}
Given $i\in I$, it is easy to see that $(S_i\cap g^{-1}S_i)\cdot V_i$ is disjoint from $\mathrm{Fix}(g)$. Thus $\bigsqcup_{i\in I}(S_i\cap g^{-1}S_i)\cdot V_i\subseteq X\setminus\mathrm{Fix}(g)$, and we have
\begin{align*}
1-\mu(\mathrm{Fix}(g))&\ge\sum_{i\in I}|S_i\cap g^{-1}S_i|\mu(V_i)\\
&\ge(1-\varepsilon')\sum_{i\in I}|S_i|\mu(V_i)\\
&=(1-\varepsilon')\mu\big(\bigsqcup_{i\in I}S_i\cdot V_i\big)\\
&\stackrel{\text{\eqref{2.4} \eqref{eq densities suprema infima}}}{\ge}{(1-\varepsilon')^2}=1-\varepsilon.
\end{align*}
As $\varepsilon$ is arbitrary, this shows that $\mu(\mathrm{Fix}(g))=0$. Thus $G\curvearrowright X$ is essentially free.
\end{proof}

We end this section by recalling the definition of the small boundary property.

\begin{df}\label{def sbp}
Let $G$ be a discrete group and let $X$ be a compact Hausdorff space. An action $G\curvearrowright X$ is said to have the \textit{small boundary property} if $X$ admits a basis for its topology consisting of sets $U\subseteq X$ such that $\mu(\partial U)=0$ for all $\mu\in M_G(X)$.
\end{df}

\section{Essentially free actions and almost finiteness in measure}\label{sec ess free}

Our main goal in this section is to show that essentially free actions on zero-dimensional spaces are almost finite in measure (\autoref{thm ess free actions on zero dim spaces are afinmeas}). It will then follow from \autoref{rem:EssFreeAlmostFinite} that, for actions on zero-dimensional spaces, essential freeness is in fact \emph{equivalent} to almost finiteness in measure.

Our approach follows \cite[Section~3]{KerSza_almost_2020}, and the lemmas we present below are careful extensions of theirs to the situation where a set of fixed points has null density but may be nonempty. In order to efficiently deal with such subsets, we introduce the following notation.

\begin{nota}
Let $G\curvearrowright X$ be an action of a discrete group on a topological space. For $g\in G$, we set $\mathrm{Fix}(g)= \{x\in X\colon g\cdot x=x\}$. For a finite subset $F\subseteq G$, we set 
\[
X^F_\mathrm{nf}\coloneqq  \bigcup_{g\in F^{-1}F\setminus\{1\}}\mathrm{Fix}(g), \ \ \  \text{ and } \ \ \  X^F_\mathrm{free}\coloneqq X\setminus X^F_\mathrm{nf}.
\] 
Note that $X^F_\mathrm{free}$ is an open set, and moreover we have 
\[
X^F_\mathrm{free}=\big\{x\in X\colon \mbox{the map } F\to X \mbox{ given by } g\mapsto gx \mbox{ is injective}\big\}.
\]
In other words, the restricted action $F\curvearrowright X_{\mathrm{free}}^F$ is free, and $X^F_{\mathrm{nf}}$ is the part of $X$ where $F$ acts non-freely.
\end{nota}

The following is \cite[Definition~3.7]{KerSza_almost_2020}.

\begin{df}
Let $\varepsilon>0$ and let $X$ be a topological space. A collection $\{A_i\}_{i\in I}$ of finite subsets of $X$ is said to be \emph{$\varepsilon$-disjoint} if for each $i\in I$ there exists a subset $A_i'\subseteq A_i$ such that $|A_i'|\ge(1-\varepsilon)|A_i|$ and the sets $A_i'$, for $i\in I$, are pairwise disjoint.   
\end{df}

\begin{lma}\label{lem first in essfree are afinmeas}
Let $G$ be a countably infinite discrete amenable group acting on a compact metrizable space $X$. Assume that the action $G\curvearrowright X$ is essentially free. Let $T, L\subseteq G$ be finite subsets, let $W\subseteq X$ be any subset, let $\beta>0$, and let $\varepsilon\in(0,1)$. Assume that for each $w\in W$ there exists a subset $E_w\subseteq L$ such that the following conditions are satisfied:
\begin{enumerate}[label=(\roman*)]
\item $E_w$ is $\big(T^{-1},\beta(1-\varepsilon)\big)$-invariant for all $w\in W$.
			
\item The map $E_w\to E_w\cdot w$, given by $g\mapsto g\cdot w$,  is bijective for all $w\in W$. In other words, $w\in X^{E_w}_\mathrm{free}$ for all $w\in W$.

\item The collection $\{E_w\cdot w\}_{w\in W}$ is $\varepsilon$-disjoint.

\item The set $A\coloneqq \bigcup_{w\in W}E_w\cdot w$ has positive lower Banach density.
\end{enumerate}
Then, for any F\o lner sequence $(F_n)_{n\in\mathbb{N}}$ for $G$, there exists $n_0\in\mathbb{N}$ such that, for any $n\geq n_0$, there is a closed set $Y_n\subseteq X$ of zero upper Banach density satisfying
\[
\big|\{g\in F_n\colon g\cdot x\in (T^{-1}\cdot A)\triangle A\}\big|<\beta \big|\{g\in F_n\colon g\cdot x\in A\}\big|,
\]
for all $x\in X\setminus Y_n$.
\end{lma}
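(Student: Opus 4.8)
The plan is to fix a point $x$ and transport the whole question into the group $G$ by passing to the return set $P=P(x)\coloneqq\{g\in G\colon g\cdot x\in A\}$, so that $\{g\in F_n\colon g\cdot x\in A\}=P\cap F_n$ and, since $g\cdot x\in T^{-1}\cdot A$ exactly when $tg\cdot x\in A$ for some $t\in T$, also $\{g\in F_n\colon g\cdot x\in T^{-1}\cdot A\}=(T^{-1}P)\cap F_n$. The desired inequality then becomes the purely combinatorial statement
\[
\big|\big((T^{-1}P)\triangle P\big)\cap F_n\big|<\beta\,|P\cap F_n|.
\]
To control the shape of $P$ I first quarantine the non-free behaviour: put $S_n\coloneqq F_n^{-1}LL^{-1}F_n$ and
\[
Y_n\coloneqq\bigcup_{s\in S_n\setminus\{1\}}\mathrm{Fix}(s),
\]
which is closed and, by essential freeness together with \eqref{eq densities suprema infima}, satisfies $\overline{D}(Y_n)=\sup_{\mu}\mu(Y_n)=0$. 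For $x\notin Y_n$ the map $g\mapsto g\cdot x$ is injective on $S_n$; since any base $w$ whose tower $E_w\cdot w$ meets the orbit of $x$ lies in $Gx$ and has $\gamma_w\in L^{-1}F_n$ for the unique $\gamma_w$ with $\gamma_w\cdot x=w$, condition (ii) gives the clean identification $P\cap F_n=\bigcup_{w}E_w\gamma_w\cap F_n$, where by (iii) the translated shapes $\{E_w\gamma_w\}_w$ form an $\varepsilon$-disjoint family in $G$.

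Next I would prove the group-level symmetric-difference estimate, which is now free of any freeness subtleties. A direct check shows $(T^{-1}P)\triangle P\subseteq Q$, where $Q\coloneqq\bigcup_{w}\big(T^{-1}E_w\triangle E_w\big)\gamma_w$: indeed, if $g\in P\setminus T^{-1}P$, writing $g=h\gamma_w$ with $h\in E_w$ forces $th\notin E_w$ for every $t\in T$, so $h\in E_w\setminus T^{-1}E_w$, and symmetrically for $g\in(T^{-1}P)\setminus P$. By the invariance hypothesis (i), each piece satisfies $|(T^{-1}E_w\triangle E_w)\gamma_w|=|T^{-1}E_w\triangle E_w|<\beta(1-\varepsilon)|E_w|\le\beta|E_w'|$, where $E_w'\subseteq E_w$ is the $\varepsilon$-disjoint representative furnished by (iii). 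Thus, tower by tower, the symmetric difference is strictly smaller than $\beta$ times the disjoint part of the base.

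It then remains to sum these against $|P\cap F_n|\ge\big|\bigsqcup_w E_w'\gamma_w\cap F_n\big|$. The towers whose $T^{-1}L$-translate lies entirely inside $F_n$ contribute $Q$-mass at most $\beta$ times their disjoint base mass, all of which sits inside $P\cap F_n$, yielding a contribution $<\beta|P\cap F_n|$. For the towers cut by the boundary I would invoke \autoref{lem invariance}: a mildly truncated shape $E_w\cap\gamma_w^{-1}F_n$, retaining a $(1-\varepsilon)$-fraction, remains approximately $(T^{-1},\cdot)$-invariant, so its boundary contribution is still dominated by $\beta$ times its surviving mass, while severely truncated towers lie within bounded distance of $G\setminus F_n$ and hence have total mass $o(|F_n|)$ by the F{\o}lner property. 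Finally, (iv) and \eqref{eq densities Folner sets} give a uniform lower bound $|P\cap F_n|\ge(\underline{D}(A)-o(1))|F_n|>0$, which makes the inequality non-vacuous and lets me fix $n_0$ so large that every $o(|F_n|)$ boundary error falls below the available slack for all $n\ge n_0$ and all $x\notin Y_n$.

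The main obstacle is precisely this last balancing act. The per-tower estimate only beats $\beta$ by the margin between the $\beta(1-\varepsilon)$-invariance of (i) and the $(1-\varepsilon)$-loss incurred by the $\varepsilon$-disjointness of (iii), so the interior contribution is essentially tight at $\beta$; extracting a genuine strict inequality therefore hinges on a careful treatment of the towers truncated by the boundary of $F_n$ through \autoref{lem invariance}, and on exploiting the overlaps between towers (which make $(T^{-1}P)\triangle P$ strictly smaller than the crude bound $Q$) in order to absorb the residual boundary terms. Simultaneously tracking the injectivity window $S_n$, the positions $\gamma_w\in L^{-1}F_n$, and the interaction of these error terms with the uniform density lower bound is where the argument becomes genuinely delicate.
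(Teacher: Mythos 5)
Your overall strategy---transporting the problem into the group via the return set $P$, quarantining non-freeness in a closed fixed-point set $Y_n$ of zero upper Banach density, and splitting towers into those interior to $F_n$ and those cut by its boundary---matches the paper's proof in outline. But there is a genuine gap exactly where you flag ``the main obstacle'': you have no uniform margin below $\beta$ for the interior towers, hence nothing with which to absorb the $o(|F_n|)$ boundary error. The hypotheses give only the strict per-tower inequality $|T^{-1}E_w\triangle E_w|<\beta(1-\varepsilon)|E_w|$, so a priori the interior contribution can be arbitrarily close to $\beta|P\cap F_n|$, and neither of your proposed rescue mechanisms works. \autoref{lem invariance} runs in the wrong direction: it requires the input invariance to be small relative to a prescribed target, whereas here the input invariance $\beta(1-\varepsilon)$ is fixed by hypothesis and truncation only degrades it; quantitatively, a tower retaining a $(1-\varepsilon')$-fraction inside $F_n$ can have the ratio of (symmetric difference inside $F_n$) to (disjoint base inside $F_n$) as large as $\beta(1-\varepsilon)/(1-\varepsilon-\varepsilon')>\beta$, so even mildly truncated towers break the budget. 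And overlaps between towers cannot be exploited, since $\varepsilon$-disjointness permits the towers to be genuinely pairwise disjoint.

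The missing idea is the paper's very first step, which uses the hypothesis $E_w\subseteq L$ with $L$ \emph{finite}: since $L$ has only finitely many subsets, there exists $\beta_0<\beta$ such that every $\big(T^{-1},\beta(1-\varepsilon)\big)$-invariant subset of $L$ is automatically $\big(T^{-1},\beta_0(1-\varepsilon)\big)$-invariant. This is what creates the uniform slack $\beta-\beta_0$. The paper then sets $K=LL^{-1}(\{1_G\}\cup T)$ and requires $F_n$ to be $\big(K,\tfrac{\beta-\beta_0}{2|K|}\underline{D}(A)\big)$-invariant---this is where hypothesis (iv), $\underline{D}(A)>0$, enters essentially---so that the set $C$ of points of $F_n\cdot x$ whose $K$-translates leave $F_n\cdot x$ satisfies $|C|<(\beta-\beta_0)|A\cap F_n\cdot x|$. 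Every point of the symmetric difference coming from an incomplete tower is thrown into $C$, so no truncation analysis is ever needed; the complete towers contribute at most $\beta_0(1-\varepsilon)\sum_{w\in W'}|E_w|\le\beta_0|A\cap F_n\cdot x|$, and the two pieces sum to strictly less than $\beta|A\cap F_n\cdot x|$. A secondary, fixable issue: your injectivity set $S_n=F_n^{-1}LL^{-1}F_n$ omits $T$, so the identification of $(T^{-1}P)\cap F_n$ with tower translates is not justified as stated (the relevant towers are indexed by $\gamma_w\in L^{-1}TF_n$); the paper sidesteps this entirely by working with subsets of $X$ and invoking injectivity of $g\mapsto g\cdot x$ only on $F_n$ itself, together with the bound $|T^{-1}E_w\cdot w\triangle E_w\cdot w|\le|T^{-1}E_w\triangle E_w|$.
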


\begin{proof}
Since the set $L$ is finite, we can find $0<\beta_0<\beta$ such that whenever $L'\subseteq L$ is $(T^{-1},\beta(1-\varepsilon))$-invariant, then $L'$ is actually $(T^{-1},\beta_0(1-\varepsilon))$-invariant. In particular, by (i), for every $w\in W$, the set $E_w$ is $(T^{-1},\beta_0(1-\varepsilon))$-invariant. Set 
\[
K= L L^{-1}(\{1_G\}\cup T),
\] 
and note that $K$ is a finite subset of $G$ containing $LL^{-1}$. 
Let $(F_n)_{n\in\mathbb{N}}$ be a F\o lner sequence for $G$. Employing \eqref{eq densities Folner sets}, we can find $n_0\in\N$ such that for $n\geq n_0$ we have
\begin{enumerate}
\item[(a)] $\underline{D}_{F_n}(A)>\frac{1}{2}\underline{D}(A)$ 
\item[(b)] $F_n$ is $\left(K,\frac{\beta-\beta_0}{2|K|}\underline{D}(A)\right)$-invariant.
\end{enumerate}

For $n\geq n_0$, set $Y_n= X^{F_n}_\mathrm{nf}\subseteq X$, which is a closed subset of $X$. Moreover, it follows from the second part of \eqref{eq densities suprema infima} that the upper Banach density of $Y_n$ is zero, since the action is essentially free. Now, let $x\in X\setminus Y_n$ be given. Since $x$ does not belong to $Y_n$, the map $F_n\to F_n\cdot x$ given by $g\mapsto g\cdot x$ is a bijection. Set 
\[
C=\{z\in X \ \mbox{ such that } \  z\in F_n\cdot x \ \mbox{  and  } \ K\cdot z\not\subseteq F_n\cdot x\}\subseteq X,
\] 
and note that $C\subseteq K^{-1}(KF_n\cdot x\triangle F_n\cdot x)$. Hence, using the definition of $\underline{D}_{F_n}$ at the third step and using at the last step that $x\in X^{F_n}_{\mathrm{free}}$, we have
\begin{align*}\label{eq size of C}\tag{3.1}
\abs{C}&\le\abs{K}\cdot\abs{KF_n\triangle F_n}\\
&\stackrel{\mathrm{(b)}}{\leq}\frac{\beta-\beta_0}{2}\underline{D}(A)\cdot \left|F_n\right|\\
&\leq(\beta-\beta_0)\cdot\frac{1}{2}\underline{D}(A)\cdot\underline{D}_{F_n}(A)^{-1}\cdot\left|\{t\in F_n\colon t\cdot x\in A\}\right|\\
&\stackrel{\mathrm{(a)}}{<}(\beta-\beta_0)\cdot\left|A\cap F_n\cdot x\right|,
\end{align*}
Using $\varepsilon$-disjointness of the sets $\{E_w\cdot w\}_{w\in W}$ and assumption (ii) of the lemma, for every $w\in W$ we can find a subset $E_w'\subseteq E_w$ with $|E'_w|\ge(1-\varepsilon)|E_w|$, such that the sets $E'_w\cdot w$, for $w\in W$, are pairwise disjoint. Set 
\[
W'=\{w\in W\colon E_w\cdot w\subseteq F_n\cdot x\}.
\] 
Then, using assumption (ii) of the lemma at the second step, we get
\begin{equation}\label{eq second estimate}\tag{3.2}
(1-\varepsilon)\sum_{w\in W'}|E_w|\le\sum_{w\in W'}|E'_w|=\sum_{w\in W'}|E'_w\cdot w|=\Big|\bigsqcup_{w\in W'}E'_w\cdot w\Big|\le \big|A\cap F_n\cdot x\big|.
\end{equation}
We claim that 
\begin{equation}\label{eq  claim inclusion}\tag{3.3}
(T^{-1}\cdot A\triangle A)\cap F_n\cdot x\subseteq C\cup\bigcup_{w\in W'}(T^{-1}E_w\cdot w\triangle E_w\cdot w).
\end{equation}
Indeed, first note that
\[
T^{-1}\cdot A\triangle A=\Big(\bigcup_{w\in W}T^{-1}E_w\cdot w\Big)\triangle\Big(\bigcup_{w\in W}E_w\cdot w\Big)\subseteq \bigcup_{w\in W}\big(T^{-1}E_w\cdot w\triangle E_w\cdot w\big).
\]
Let $z\in(T^{-1}\cdot A\triangle A)\cap F_n\cdot x$. If $z\in \bigcup_{w\in W'}(T^{-1}E_w\cdot w\triangle E_w\cdot w)$, then there is nothing to show. Otherwise, there exists $w\in W\setminus W'$ such that $z\in T^{-1}E_w\cdot w\triangle E_w\cdot w$, and, in particular, $z$ belongs to $(\{1_G\}\cup T^{-1})E_w\cdot w$. In this case, we can find $h\in\{1_G\}\cup T^{-1}$ and $g\in E_w$ so that $z=hgw$. By definition of $W'$, we have $E_w\cdot w\not\subseteq F_n\cdot x$, so there exists $g_0\in E_w$ such that $g_0\cdot w\not\in F_n\cdot x$.  Set $s= g_0g^{-1}h^{-1}\in K$. Then 
\[
s\cdot z=g_0\cdot w\not\in F_n\cdot x,
\]
which is to say that $K\cdot z\not\subseteq F_n\cdot x$. Thus, we have $z\in C$, which finishes the proof of the claim (that is, the inclusion in \eqref{eq  claim inclusion}).

To conclude, using that $x\in X_{\mathrm{free}}^{F_n}$ at the first step, and using $\big(T^{-1},\beta_0(1-\varepsilon)\big)$-invariance of $E_w$ at the fourth step, we have
\begin{align*}
\left|\{g\in F_n\colon  g\cdot x\in (T^{-1}\cdot A)\triangle A\}\right| &= \left|(T^{-1}\cdot A\triangle A)\cap F_n\cdot x\right|\\
&\stackrel{\mathmakebox[\widthof{<}]{\text{\eqref{eq  claim inclusion}}}}{\le} \left|C\right|+\sum_{w\in W'}\left|T^{-1}E_w\cdot w\triangle E_w\cdot w\right|\\
&\le \left|C\right|+\sum_{w\in W'}\left|T^{-1}E_w\triangle E_w\right|\\
&\stackrel{\mathmakebox[\widthof{<}]{\text{\eqref{eq size of C}}}}{<} (\beta-\beta_0)\left|A\cap F_n\cdot x\right|+\beta_0(1-\varepsilon)\sum_{w\in W'}\left|E_w\right|\\
&\stackrel{\mathmakebox[\widthof{<}]{\text{\eqref{eq second estimate}}}}{\le} (\beta-\beta_0)\left|A\cap F_n\cdot x\right|+\beta_0\left|A\cap F_n\cdot x\right| \\
& =\beta\left|A\cap F_n\cdot x\right| =\beta\left|\{g\in F_n\colon g\cdot x\in A\}\right|.
\end{align*}

Since this is true for any $x\in X\setminus Y_n$, the proof is complete.
\end{proof}

\begin{lma}\label{lem second in essfree are afinmeas}
Let $G$ be a countably infinite discrete amenable group acting on a compact metrizable zero-dimensional space $X$. Let $T\subseteq G$ be a finite set containing the unit of $G$, and let $\varepsilon,\beta>0$ satisfy $\varepsilon(1+\beta)<1$. Let $A\subseteq B\subseteq X$ be clopen sets such that:
\begin{enumerate}[label=(\roman*)]
\item $A$ satisfies the conclusion of \autoref{lem first in essfree are afinmeas}, that is: for any F\o lner sequence $(F_n)_{n\in\mathbb{N}}$ for $G$, there exists $n_0\in\mathbb{N}$ such that, for any $n\geq n_0$, there is a closed set $Y_n\subseteq X$ of zero upper Banach density satisfying
\[
\big|\{g\in F_n\colon g\cdot x\in (T^{-1}\cdot A)\triangle A\}\big|<\beta \big|\{g\in F_n\colon g\cdot x\in A\}\big|,
\]
for all $x\in X\setminus Y_n$.

\item $\underline{D}_T(B)\ge\varepsilon$.

\end{enumerate}
Then
\[
\underline{D}(B)\ge(1-\varepsilon(1+\beta))\underline{D}(A)+\varepsilon.
\]
\end{lma}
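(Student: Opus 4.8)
The plan is to fix a F\o lner sequence $(F_n)_{n\in\mathbb{N}}$ for $G$ and argue pointwise along orbits, but only at points $x$ lying outside the null ``bad set'' $Y_n$ furnished by hypothesis (i). The presence of this nonempty (though negligible) fixed-point set is exactly what prevents a na\"ive passage to $\underline{D}_{F_n}$ by taking an infimum over all of $X$, as one would in the free case. To circumvent this, I would prove a pointwise estimate on $X\setminus Y_n$ and then \emph{integrate} it against an arbitrary $\mu\in M_G(X)$, using that $Y_n$ is closed with $\overline{D}(Y_n)=0$, so that $\mu(Y_n)=0$ by \eqref{eq densities suprema infima}, together with the invariance identity $\mu(B)=\int_X \tfrac{1}{|F_n|}\sum_{t\in F_n}\mathbbm{1}_B(tx)\,d\mu(x)$. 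At the very end, \eqref{eq densities suprema infima} converts the resulting measure inequality back into the desired density inequality, since $A$ and $B$ are clopen (hence open).

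For the pointwise estimate, fix $x\in X\setminus Y_n$ and write $N_A(x)=|\{u\in F_n\colon ux\in A\}|$ and $N_B(x)=|\{u\in F_n\colon ux\in B\}|$. Since $A\subseteq B$, we have $N_B(x)=N_A(x)+|\{u\in F_n\colon ux\in B\setminus A\}|$, so the whole game is to produce many $B\setminus A$ hits. First, because $1_G\in T$ we have $A\subseteq T^{-1}A$, whence $(T^{-1}A)\triangle A=T^{-1}A\setminus A$; then hypothesis (i) gives $|\{u\in F_n\colon ux\in T^{-1}A\}|<(1+\beta)N_A(x)$. The key observation is that if $u\in F_n$ satisfies $ux\notin T^{-1}A$, then no $s\in T$ can send $ux$ into $A$ (otherwise $ux\in s^{-1}A\subseteq T^{-1}A$), so \emph{every} $s\in T$ with $sux\in B$ in fact has $sux\in B\setminus A$; by hypothesis (ii) there are at least $\varepsilon|T|$ such $s$. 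Setting $G_0=\{u\in F_n\colon ux\notin T^{-1}A\}$, so that $|G_0|>|F_n|-(1+\beta)N_A(x)$, a double count of the pairs $(s,u)\in T\times G_0$ with $sux\in B\setminus A$ (each group element $v=su$ arising with multiplicity at most $|T|$) yields $|\{v\in TF_n\colon vx\in B\setminus A\}|\geq\varepsilon|G_0|$.

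It remains to assemble the pieces. Discarding the elements of $TF_n\setminus F_n$ costs at most $|TF_n\setminus F_n|=\gamma_n|F_n|$ with $\gamma_n\to0$ by the F\o lner property (taking $n$ large enough that $F_n$ is sufficiently $T$-invariant, cf.\ \eqref{eq densities Folner sets} and \autoref{rmk Folner}), so that
\[
N_B(x)\ \geq\ N_A(x)+\varepsilon\big(|F_n|-(1+\beta)N_A(x)\big)-\gamma_n|F_n|\ =\ \big(1-\varepsilon(1+\beta)\big)N_A(x)+(\varepsilon-\gamma_n)|F_n|.
\]
Integrating this against $\mu\in M_G(X)$ over $X\setminus Y_n$, and using $\mu(Y_n)=0$ together with the invariance identities $\int \tfrac{1}{|F_n|}N_A\,d\mu=\mu(A)$ and $\int \tfrac{1}{|F_n|}N_B\,d\mu=\mu(B)$, gives $\mu(B)\geq(1-\varepsilon(1+\beta))\mu(A)+\varepsilon-\gamma_n$; letting $n\to\infty$ removes $\gamma_n$, and since $1-\varepsilon(1+\beta)>0$, taking the infimum over $\mu$ and invoking \eqref{eq densities suprema infima} produces $\underline{D}(B)\geq(1-\varepsilon(1+\beta))\underline{D}(A)+\varepsilon$. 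I expect the main obstacle to be conceptual rather than computational: the crucial insight is that points whose orbit avoids $T^{-1}A$ contribute their \emph{entire} $T$-boost to $B\setminus A$, which is precisely what forces the genuine gain of $\varepsilon$ over $\underline{D}(A)$. The secondary difficulty, specific to the essentially free (as opposed to free) setting, is that the estimate only holds off the null set $Y_n$, which is what necessitates integrating against invariant measures in place of a direct pointwise infimum.
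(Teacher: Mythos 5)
Your proof is correct, and its combinatorial core is the same as the paper's: both bound $|\{g\in F_n\colon g\cdot x\in T^{-1}\cdot A\}|$ by $(1+\beta)\,|\{g\in F_n\colon g\cdot x\in A\}|$ using hypothesis (i) together with $1_G\in T$, and both exploit the fact that if $u\cdot x\notin T^{-1}\cdot A$, then every $T$-translate of $u\cdot x$ that lands in $B$ must land in $B\setminus A$, so that hypothesis (ii) produces at least $\varepsilon|T|$ such translates; your set $G_0$ is exactly the paper's $F'$, and your double count of pairs merely replaces the paper's pigeonhole choice of a single $t^*\in T$. The genuine difference lies in how the pointwise estimate, valid only off the null set $Y_n$, is converted into the density inequality. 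Because $\underline{D}_{TF}$ is an infimum over \emph{all} points of $X$, the paper has to neutralize the points of $Y$: it uses zero-dimensionality of $X$ and \eqref{eq upper dens lim of nbds} to pick a clopen set $C\supseteq Y$ with $\overline{D}(C)<\theta/\left|TF\right|$, proves a lower bound for $\underline{D}(B\cup TFC)$, and then removes the correction term via $\mu(TFC)\le \left|TF\right|\cdot\overline{D}(C)$. You instead integrate the pointwise bound against an arbitrary $\mu\in M_G(X)$, using that $Y_n$ is closed with $\overline{D}(Y_n)=0$, hence $\mu(Y_n)=0$ by \eqref{eq densities suprema infima}, together with the identity $\int_X\tfrac{1}{|F_n|}\sum_{t\in F_n}\mathbbm{1}_B(t\cdot x)\,d\mu(x)=\mu(B)$. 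This is a real simplification: it bypasses the clopen approximation of $Y_n$ entirely, so zero-dimensionality of $X$ is never used in your argument (openness of $A$ and $B$ suffices both for \eqref{eq densities suprema infima} and for measurability), and it yields the slightly stronger intermediate statement $\mu(B)\ge(1-\varepsilon(1+\beta))\mu(A)+\varepsilon$ for every invariant $\mu$, from which the conclusion follows by the infimum argument you give (correctly flagging that $1-\varepsilon(1+\beta)>0$ is what allows replacing $\mu(A)$ by $\underline{D}(A)$). The paper's route buys only stylistic consistency with the density-based arguments of Kerr--Szab\'{o}; yours is shorter and marginally more general.
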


\begin{proof}
The conclusion follows immediately from (ii) if $\underline{D}(A)=0$, so assume that this is not the case. Let $\theta>0$ satisfy $\theta< \underline{D}(A)$. By considering a F\o lner sequence for $G$, we obtain a finite subset $F\subseteq G$ such that

\begin{enumerate}

\item[(a)] $F$ is $(T,\theta)$-invariant.
			
\item[(b)] $\underline{D}_F(A)>\underline{D}(A)-\theta$.

\item[(c)] there is a closed set $Y\subseteq X$ with $\overline{D}(Y)=0$ such that for all $x\in X\setminus Y$ we have
\[
\left|\{g\in F\colon g\cdot x\in T^{-1}\cdot A\triangle A\}\right|<\beta \left|\{g\in F\colon g\cdot x\in A\}\right|.
\]

\end{enumerate}
Condition~(c) implies that for all $x\in X\setminus Y$, we have 
\begin{equation}\label{eq 2nd lem 1 plus beta}\tag{3.4}
\left|\{g\in F\colon g\cdot x\in T^{-1}\cdot A\}\right|<(1+\beta)\left|\{g\in F\colon g\cdot x\in A\}\right|.
\end{equation}
By $(T,\theta)$-invariance of $F$, we get 
\begin{align*}\label{eq 2nd lemma estimate TF}\tag{3.5}
\left|TF\right|<(1+\theta)\left|F\right|.
\end{align*}
For every $x\in X\setminus Y$, set $\alpha_x\coloneqq \frac{1}{\left|F\right|}\left|\{g\in F\colon g\cdot x\in A\}\right|$ and note that 
\begin{equation}\label{eqalphax}\tag{3.6}
\alpha_x\ge\underline{D}_F(A)>\underline{D}(A)-\theta.
\end{equation}
Define $F'=\{g\in F\colon A\cap (Tg\cdot x)=\emptyset\}$, and observe that 
\[
F\setminus F'=\{g\in F\colon g\cdot x\in T^{-1}\cdot A\}.
\] 
Thus,
\begin{align*}\label{eq 2nd lemma estimate alphax}\tag{3.7}
\frac{\left|F'\right|}{\left|F\right|}&=1-\frac{\left|F\setminus F'\right|}{\left|F\right|}\\
&=1-\frac{\left|\{g\in F\colon g\cdot x\in T^{-1}\cdot A\}\right|}{\left|\{g\in F\colon g\cdot x\in A\}\right|}\cdot\frac{\left|\{g\in F\colon g\cdot x\in A\}\right|}{\left|F\right|}\\
&\stackrel{\mathmakebox[\widthof{=}]{\text{\eqref{eq 2nd lem 1 plus beta}}}}{>}1-(1+\beta){\alpha_x}.
\end{align*}
Fix $g\in G$. Since $\underline{D}_T(B)\ge\varepsilon$ by assumption, we have $\left|\{t\in T\colon tg\cdot x\in B\}\right|\ge\varepsilon\left|T\right|$. If $g\in F'$, then we also have $A\cap Tg\cdot x=\emptyset$. Combining these facts, we get $\left|\{t\in T\colon tg\cdot x\in B\setminus A\}\right|\ge\varepsilon\left|T\right|$, and hence 
\[
\left|\{(g,t)\in F'\times T\colon tg\cdot x\in B\setminus A\}\right|\ge\varepsilon\cdot\left|F'\right|\cdot \left|T\right|.
\] 
Thus, there exists $t^*\in T$ such that $\left|\{g\in F'\colon t^*g\cdot x\in B\setminus A\}\right|\ge\varepsilon\left|F'\right|$. In the following calculation, we use at the second step that $T$ contains the identity of $G$, to get:
\begin{align*}\label{long eq}\tag{3.8}
&\frac{\left|\{s\in TF\colon  s\cdot x\in B\}\right|}{\left|TF\right|}\\
&=\frac{\left|\{s\in TF\colon s\cdot x\in A\}\right|}{\left|TF\right|}+\frac{\left|\{s\in TF\colon s\cdot x\in B\setminus A\}\right|}{\left|TF\right|}\\
&\ge \frac{\left|\{g\in F\colon g\cdot x\in A\}\right|}{\left|F\right|}\cdot\frac{\left|F\right|}{\left|TF\right|}+\frac{\left|\{s\in t^*F\colon s\cdot x\in B\setminus A\}\right|}{\left|F'\right|}\cdot\frac{\left|F'\right|}{\left|F\right|}\cdot\frac{\left|F\right|}{\left|TF\right|} \\
&\stackrel{\mathmakebox[\widthof{=}]{\text{\eqref{eq 2nd lemma estimate TF}}}}{>} \frac{1}{1+\theta} \Big(\frac{\left|\{g\in F\colon g\cdot x\in A\}\right|}{\left|F\right|}+\frac{\left|\{s\in t^*F\colon s\cdot x\in B\setminus A\}\right|}{\left|F'\right|}\cdot\frac{\left|F'\right|}{\left|F\right|}\Big) \\
&\stackrel{\mathmakebox[\widthof{=}]{\text{\eqref{eq 2nd lemma estimate alphax}}}}{>} \frac{\alpha_x}{1+\theta}+\frac{\varepsilon(1-(1+\beta)\alpha_x)}{1+\theta}\\
&= \frac{(1-\varepsilon(1+\beta))\alpha_x+\varepsilon}{1+\theta}\\
&\stackrel{\mathmakebox[\widthof{=}]{\text{\eqref{eqalphax}}}}{\ge} \frac{(1-\varepsilon(1+\beta))(\underline{D}(A)-\theta)+\varepsilon}{1+\theta}.
\end{align*}
By \eqref{eq upper dens lim of nbds}, by zero dimensionality of $X$, and by condition~(c) above, there is a clopen set $C\subseteq X$ with $Y\subseteq C$ and $\overline{D}(C)<\frac{\theta}{\left|TF\right|}$. Using at the third step that $Y\subseteq C$, we get
\begin{align*}\label{eq_BTFC}\tag{3.9}
\underline{D}\big(B\cup TFC\big)&\ge\underline{D}_{TF}\big(B\cup TFC\big)\\
&=\inf_{x\in X}\frac{\left|\{s\in TF\colon s\cdot x\in B\cup TFC\}\right|}{\left|TF\right|}\\
&=\inf_{x\in X\setminus Y}\frac{\left|\{s\in TF\colon s\cdot x\in B\cup TFC\}\right|}{\left|TF\right|}\\
&\ge\inf_{x\in X\setminus Y}\frac{\left|\{s\in TF\colon s\cdot x\in B\}\right|}{\left|TF\right|}\\
&\stackrel{\mathmakebox[\widthof{=}]{\eqref{long eq}}}{\ge}\frac{(1-\varepsilon(1+\beta))(\underline{D}(A)-\theta)+\varepsilon}{1+\theta}.
\end{align*}
Let $\mu\in M_G(X)$. Then
\begin{align*}\label{eqmuB}\tag{3.10}
\mu(B)&\ge\mu(B\cup TFC)-\mu(TFC)\\
&\stackrel{\mathmakebox[\widthof{=}]{\eqref{eq densities suprema infima}}}{\geq}\underline{D}(B\cup TFC)-\left|TF\right|\cdot\overline{D}(C)\\
&\ge\underline{D}(B\cup TFC)-\theta.
\end{align*}
Using \eqref{eq_BTFC} and \eqref{eqmuB} at the second step, we deduce that
\[
\underline{D}(B)=\inf_{\mu\in M_G(X)}\mu(B)   \stackrel{}{\ge}\frac{(1-\varepsilon(1+\beta))(\underline{D}(A)-\theta)+\varepsilon}{1+\theta}-\theta.
\]
Taking limits as $\theta\to 0^+$, we conclude that $\underline{D}(B)\ge(1-\varepsilon(1+\beta))\underline{D}(A)+\varepsilon$, as desired.
\end{proof}

\begin{lma}\label{lem 3rd in afinmeas}
Let $G$ be a countably infinite, discrete, amenable group acting on a compact, metrizable, zero-dimensional space $X$. Let $S\subseteq G$ be a finite subset, let $\delta>0$ and $\varepsilon \in \big(0,\tfrac{1}{2}\big)$ be given, and let $Y,Z\subseteq X$ be clopen sets with $X^S_\mathrm{nf}\subseteq Z$. Then there is a clopen castle $\mathcal{C}=\{(S_i,V_i)\}_{i\in I}$ with the following properties:
\begin{enumerate}[label=(\roman*)]
\item The bases of $\mathcal{C}$ are pairwise disjoint subsets of $X\setminus Z$.

\item Each level of $\mathcal{C}$ has diameter less than $\delta$.

\item We have $S_i\subseteq S$ and $\left|S_i\right|\ge(1-\varepsilon)\left|S\right|$, for all $i\in I$.

\item The footprint $C=\bigsqcup_{i\in I}S_i\cdot V_i$ of $\mathcal{C}$ satisfies:
\begin{enumerate}[label=(\arabic*)]
\item[(a)] $Y\cap C=\emptyset$.
\item[(b)] $Y\sqcup C= Y\cup\bigcup_{i\in I}S\cdot V_i$.
\item[(c)] $\left|(Y\sqcup C)\cap S\cdot x\right|\ge\varepsilon\left|S\right|$ for all $x\in X\setminus Z$.
\end{enumerate}
\end{enumerate}
\end{lma}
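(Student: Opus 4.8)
The plan is to build the castle by processing a single fixed finite clopen partition of $X$, refined adaptively as we go, so that the construction is automatically finite and no maximality argument is needed. The one clean idea that makes everything work is to test each translate against the \emph{combined} clopen set $Y\cup C$ (the current footprint together with $Y$), rather than against $Y$ and $C$ separately.

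\emph{Setup.} Since $Z$ is clopen, $X\setminus Z$ is compact, and by hypothesis $X\setminus Z\subseteq X^S_{\mathrm{free}}$, so for every $x\in X\setminus Z$ the map $S\to X$, $s\mapsto sx$, is injective; in particular $|A\cap Sx|$ equals the number of $s\in S$ with $sx\in A$, for any $A\subseteq X$. Using zero-dimensionality together with compactness and the freeness of $S$ on $X\setminus Z$ (this is exactly where the hypothesis $X^S_{\mathrm{nf}}\subseteq Z$ enters), I would fix a finite clopen partition $\mathcal{P}_0$ of $X$ that refines $\{Y,X\setminus Y\}$ and $\{Z,X\setminus Z\}$ and is fine enough that for every piece $P\in\mathcal{P}_0$ with $P\subseteq X\setminus Z$ the translates $\{sP:s\in S\}$ are pairwise disjoint and each satisfies $\diam(sP)<\delta$. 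List the pieces of $\mathcal{P}_0$ contained in $X\setminus Z$ as $P_1,\dots,P_m$.

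\emph{Construction.} I would build the castle in $m$ stages, maintaining a clopen footprint $C^{(k)}$ with $C^{(0)}=\emptyset$. At stage $k$, set $D=Y\cup C^{(k-1)}$ (clopen) and refine $P_k$ into finitely many clopen atoms $Q$ such that for each atom and each $s\in S$ one has either $sQ\subseteq D$ or $sQ\cap D=\emptyset$; this is a finite refinement because $D$ is clopen and $S$ is finite. For an atom $Q$ put $m(Q)=|\{s\in S:sQ\subseteq D\}|$. If $m(Q)\ge\varepsilon|S|$, add no tower. If $m(Q)<\varepsilon|S|$, add the tower with base $V_Q=Q$ and shape $S_Q=\{s\in S:sQ\cap D=\emptyset\}$, and enlarge the footprint by its levels. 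After the $m$ stages, output the resulting finite collection of towers. Finiteness is immediate, since there are finitely many atoms in each $P_k$.

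\emph{Verification.} The bookkeeping then runs as follows. All genuine levels are pairwise disjoint: within a single $P_k$, two levels $sQ$ and $s'Q'$ are disjoint because either $s=s'$ (and $Q\cap Q'=\emptyset$) or $s\ne s'$ (and $sP_k\cap s'P_k=\emptyset$), so within-stage disjointness is automatic from the disjointness of translates of $P_k$; across stages, the shapes at stage $k$ avoid $D\supseteq C^{(k-1)}$, so new levels miss all earlier ones. Added towers satisfy $|S_Q|=|S|-m(Q)>(1-\varepsilon)|S|$, giving (iii); each level avoids $D\supseteq Y$, giving (a); and for $s\notin S_Q$ one has $sQ\subseteq D\subseteq Y\cup C$, so $\bigcup_{s\in S}s\cdot V_Q\subseteq Y\cup C=Y\sqcup C$, giving (b). Finally, for coverage (c), each $x\in X\setminus Z$ lies in a unique atom $Q$ of some $P_k$: if $Q$ was skipped then $|(Y\cup C^{(k-1)})\cap Sx|=m(Q)\ge\varepsilon|S|$, while if a tower was added at $Q$ then $x\in V_Q$ forces $sx\in C$ for every $s\in S_Q$, whence $|C\cap Sx|\ge|S_Q|>(1-\varepsilon)|S|>\varepsilon|S|$ using $\varepsilon<\tfrac12$. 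In either case $|(Y\sqcup C)\cap Sx|\ge\varepsilon|S|$.

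\emph{Main obstacle.} The genuine difficulty is to satisfy \emph{simultaneously} the castle requirement that all levels be pairwise disjoint, the demand that each shape have size at least $(1-\varepsilon)|S|$, and the requirement that the collection be finite. Measuring each atom against $Y$ and against the footprint separately only yields shapes of size $>(1-2\varepsilon)|S|$, which is too weak; testing instead against the single clopen set $Y\cup C$ is what produces the clean bound $>(1-\varepsilon)|S|$ and, at the same time, guarantees that every new level is disjoint from everything constructed so far. Thus the crux is really the interplay of zero-dimensionality (to perform the adaptive clopen refinement with the in/out dichotomy $sQ\subseteq D$ or $sQ\cap D=\emptyset$), the freeness of $S$ on $X\setminus Z$ afforded by $X^S_{\mathrm{nf}}\subseteq Z$ (to get disjoint translates and exact orbit counts), and the strict inequality $\varepsilon<\tfrac12$ (so that $(1-\varepsilon)>\varepsilon$ turns a large shape into sufficient coverage); refining only against the current $Y\cup C^{(k-1)}$ is what keeps each stage finite.
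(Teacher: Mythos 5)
Your proof is correct and takes essentially the same approach as the paper's: both fix a clopen partition of $X\setminus Z$ whose pieces have pairwise disjoint $S$-translates of diameter less than $\delta$, process the pieces sequentially, and classify points according to which $S$-translates land in the accumulated clopen set $Y\cup(\text{footprint so far})$, creating a tower exactly when fewer than (essentially) $\varepsilon\left|S\right|$ translates land inside, with the complementary set of group elements as the shape; the verifications of (a), (b), (c) then run identically, including the use of $\varepsilon<\tfrac12$ and of freeness of $S$ on $X\setminus Z$. The only difference is bookkeeping: the paper indexes towers by the admissible shapes $T\in\mathcal{J}$, its base $V_{i,T}$ (an intersection of preimages $s^{-1}A_{i-1}$ and $s^{-1}(X\setminus A_{i-1})$) being precisely the union of your stage-$k$ atoms with pattern $T$, whereas you index towers by the atoms of your adaptive refinement.
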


\begin{proof}
Since $X\setminus Z\subseteq X^S_\mathrm{free}$, using compactness and zero dimensionality of $X\setminus Z$, we can produce a clopen partition $\{V_j\}_{j=1}^m$ of $X\setminus Z$ so that, for every $j=1,\ldots,m$, the pair $(S,V_j)$ is a tower with levels of diameter less than $\delta$.\footnote{For example, start with a clopen partition of $X\setminus Z$ whose elements have diameter less than $\delta$. The images of these sets under $S$ may in general have larger diameter, so one subdivides the partition further to guarantee that all the translates also have small diameter.}

Set 
\[
\mathcal{J}=\big\{T\subseteq S\colon \left|T\right|\ge(1-\varepsilon)\left|S\right|\big\},
\]
and write $A_0\coloneqq Y$. For $T\in\mathcal{J}$, we define
\begin{equation*}
V_{1,T}\coloneqq V_1\cap\bigg(\bigcap_{s\in S\setminus T}s^{-1}\cdot A_0\bigg)\cap\bigg(\bigcap_{s\in T}s^{-1}\cdot (X\setminus A_0)\bigg)\subseteq X\setminus Z.
\end{equation*}

Note that the collection $\big\{(T, V_{1,T})\big\}_{T\in\mathcal{J}}$ is a castle whose footprint does not intersect $A_0$. Set $C_1=\bigsqcup_{T\in\mathcal{J}}T\cdot V_{1,T}$, and define $A_1= A_0\sqcup C_1$. For $T\in\mathcal{J}$, we further set
\begin{equation*}
V_{2,T}\coloneqq V_2\cap\bigg(\bigcap_{s\in S\setminus T}s^{-1}\cdot A_1\bigg)\cap\bigg(\bigcap_{s\in T}s^{-1}\cdot (X\setminus A_1)\bigg)\subseteq X\setminus Z.
\end{equation*}

One verifies that $\big\{(T,V_{2,T})\big\}_{T\in\mathcal{J}}$ is a castle whose footprint does not intersect $A_1$. Next, set $C_2=\bigsqcup_{T\in\mathcal{J}}T\cdot V_{2,T}$ and $A_2= A_1\sqcup C_2$. Proceeding inductively, we obtain castles $\mathcal{C}_i=\{(T,V_{i,T})\}_{T\in\mathcal{J}}$, for $i=1,\ldots,m$, with pairwise disjoint footprints, all of which are disjoint from $Y$, and with bases given by
\begin{align*}\label{df ViT}\tag{3.11}
V_{i,T}=V_i\cap\bigg(\bigcap_{s\in S\setminus T}s^{-1}\cdot A_{i-1}\bigg)\cap\bigg(\bigcap_{s\in T}s^{-1}\cdot (X\setminus A_{i-1})\bigg)\subseteq X\setminus Z.
\end{align*}
We will also need the nested sets 
\[
Y=A_0\subseteq  A_1 \subseteq \cdots \subseteq A_m\subseteq X,
\]
defined as $A_j=Y\sqcup\bigsqcup_{\substack{T\in\mathcal{J}\\ 1\le i\le j}}T\cdot V_{i,T}$ for $j=1,\ldots,m$.

Denote by $\mathcal{C}$ the union of $\mathcal{C}_1,\ldots,\mathcal{C}_m$, that is, $\mathcal{C}=\{(T,V_{i,T})\}_{\substack{T\in\mathcal{J}\\ 1\le i\le m}}$. We claim that $\mathcal{C}$ satisfies the properties listed in the statement of the lemma. 

Properties (i), (ii) and (iii) are evident, so we turn to (iv). Denoting by $C$ the footprint of $\mathcal{C}$, it follows that $Y\cap C=\emptyset$ because the footprints of the $\mathcal{C}_i$ are disjoint from $Y$. This shows (a).
		To verify condition~(b), note first that the fact that $S_i\subseteq S$ for all $i=1,\ldots,m$ implies that
\[
Y\sqcup C\subseteq Y\cup\bigcup_{\substack{T\in\mathcal{J}\\ 1\le i\le m}}S\cdot V_{i,T}.
\]
For the reverse inclusion, it suffices to show that if $x\notin Y$ and $x\in \bigcup_{\substack{T\in\mathcal{J}\\ 1\le i\le m}}S\cdot V_{i,T}$, then $x\in C$. Let $x$ satisfy these conditions, and set

\begin{equation*}
i_0\coloneqq\min\Big\{i=1,\dots,m\colon x\in\bigcup_{T\in\mathcal{J}}S\cdot V_{i,T}\Big\}.
\end{equation*}
Then there exist $s\in S$, $T\in\mathcal{J}$ and $x'\in V_{i_0,T}$ such that $x=s\cdot x'$. If $s\in T$, then $x\in C$ and we are done.

If $s\not\in T$, then by \eqref{df ViT} we have $x'\in s^{-1}\cdot A_{i_0-1}$, and thus $x=s\cdot x'\in A_{i_0-1}$. Since $x\not\in Y$, there is $1\le j<i_0$ so that \[
x\in\bigsqcup_{T\in\mathcal{J}}T\cdot V_{j,T}\subseteq\bigcup_{T\in\mathcal{J}}S\cdot V_{j,T}.
\] 
This contradicts the definition of $i_0$, and shows that $s$ must belong to $T$. We have established (b).
		
Finally, we prove condition~(c) in property~(iv). Let $x\in X\setminus Z$ and find $i=1,\ldots,m$ such that $x\in V_i$. We first treat the case where there exists $T\in\mathcal{J}$ such that $x\in V_{i,T}$. Then $T\cdot x\subseteq T\cdot V_{i,T}\subseteq C$. Using $x\in X^S_\mathrm{free}$ at the third step, we get
\begin{equation*}
\left|(Y\sqcup C)\cap S\cdot x\right|\ge\left|(Y\sqcup C)\cap T\cdot x\right|=\left|T\cdot x\right|=\left|T\right|\ge(1-\varepsilon)\left|S\right|\ge\varepsilon\left|S\right|,
\end{equation*}
as desired. Assume now that $x\not\in\bigcup_{T\in\mathcal{J}}V_{i,T}$, we claim that 
\begin{align*}\label{ineq}\tag{3.12}
\left|\{s\in S\colon s\cdot x\in A_{i-1}\}\right|\ge\varepsilon\left|S\right|.
\end{align*}
Arguing by contradiction, assume that the inequality fails. Define $T_x= \{s\in S\colon s\cdot x\not\in A_{i-1}\}$. Then $T_x$ belongs to $\mathcal{J}$, and it is clear that $x\in V_{i,T_x}$. This contradicts our assumption that $x\not\in\bigcup_{T\in\mathcal{J}}V_{i,T}$, and proves inequality \eqref{ineq}. Using at the second step again that $x\in X^S_\mathrm{free}$, we have
\begin{equation*}
\left|(Y\sqcup C)\cap S\cdot x\right|\ge\left|A_{i-1}\cap S\cdot x\right|=\left|\{s\in S\colon s\cdot x\in A_{i-1}\}\right| \stackrel{\eqref{ineq}}{\ge}\varepsilon\left|S\right|.
\end{equation*}
This completes the proof of (c) and of the lemma.
\end{proof}

The following is the main result of this section.

\begin{thm}\label{thm ess free actions on zero dim spaces are afinmeas}
Let $G$ be a countably infinite, discrete, amenable group acting on a compact metrizable, zero-dimensional space $X$. Then $G\curvearrowright X$ is 
almost finite in measure if and only if it is essentially free.
\end{thm}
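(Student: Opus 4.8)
The forward direction requires nothing new: an action that is almost finite in measure is essentially free by \autoref{rem:EssFreeAlmostFinite}, and that implication uses neither zero-dimensionality nor any of the preparatory lemmas. All the content is therefore in the converse, which is the zero-dimensional case of \autoref{thmintro: AF in measure}, and the plan is to assemble \autoref{lem 3rd in afinmeas}, \autoref{lem first in essfree are afinmeas} and \autoref{lem second in essfree are afinmeas} into an iterative castle-building scheme adapted from \cite{KerSza_almost_2020}.

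Fix a finite subset $K \subseteq G$ and $\delta, \varepsilon_0 > 0$; the goal is to produce a clopen castle with $(K,\delta)$-invariant shapes, levels of diameter $< \delta$, and footprint of lower Banach density $\geq 1 - \varepsilon_0$. First I would fix a small $\beta > 0$ with $\tfrac{1}{1+\beta} > 1 - \varepsilon_0$ and a trimming parameter $\varepsilon \in \big(0, \tfrac12\big)$ with $\varepsilon(1+\beta) < 1$, and then choose a single finite set $S$ so invariant that, by \autoref{lem invariance}, every subset $S' \subseteq S$ with $|S'| \geq (1-\varepsilon)|S|$ is $(K,\delta)$-invariant and also invariant enough for the boundary estimates below. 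The iteration starts from the empty castle, with footprint $A_0 = \varnothing$ and $\underline{D}(A_0) = 0$. At stage $k$ I would apply \autoref{lem 3rd in afinmeas} with shape $S$, with $Y = A_{k-1}$ the current footprint, and with a clopen set $Z \supseteq X^S_{\mathrm{nf}}$ (chosen as below), producing a new clopen castle whose levels are disjoint from $A_{k-1}$ and whose shapes are large subsets of $S$; set $A_k$ to be the enlarged footprint. Property (iv)(c) of \autoref{lem 3rd in afinmeas} provides the uniform lower-density floor that serves as hypothesis (ii) of \autoref{lem second in essfree are afinmeas}, while \autoref{lem first in essfree are afinmeas}, applied to the footprint (a union of towers whose shapes are sufficiently invariant subsets of $S$, with $\varepsilon$-disjointness coming from the castle structure), certifies that it satisfies the boundary-control conclusion that is hypothesis (i). \autoref{lem second in essfree are afinmeas} then yields the increment
\[
\underline{D}(A_k) \ge \big(1 - \varepsilon(1+\beta)\big)\,\underline{D}(A_{k-1}) + \varepsilon .
\]

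Since $\varepsilon(1+\beta) < 1$, the map $t \mapsto (1 - \varepsilon(1+\beta))t + \varepsilon$ is an affine contraction with fixed point $\tfrac{1}{1+\beta}$, so the densities $\underline{D}(A_k)$ increase monotonically toward $\tfrac{1}{1+\beta} > 1 - \varepsilon_0$; hence after finitely many stages the footprint has lower Banach density $> 1 - \varepsilon_0$. As $A_N$ is a disjoint union of the trimmed towers over all stages, the combined collection is a genuine clopen castle whose shapes are $(K,\delta)$-invariant, whose levels have diameter $< \delta$, and whose footprint has density $> 1 - \varepsilon_0$, which is exactly the castle demanded by \autoref{def almost finite in measure}.

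The single place where essential freeness enters — and, I expect, the main obstacle — is the choice and control of $Z$. Essential freeness gives $\mu(\mathrm{Fix}(g)) = 0$ for every $g \neq 1$ and every $\mu \in M_G(X)$, so by the second identity in \eqref{eq densities suprema infima} the closed set $X^S_{\mathrm{nf}} = \bigcup_{g \in S^{-1}S \setminus \{1\}} \mathrm{Fix}(g)$ has zero upper Banach density; zero-dimensionality together with \eqref{eq upper dens lim of nbds} then lets me enclose it in a clopen set $Z$ of arbitrarily small upper Banach density, on whose complement $X \setminus Z \subseteq X^S_{\mathrm{free}}$ the set $S$ acts freely. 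This freeness on $X \setminus Z$ is what legitimizes the bijectivity and disjointness hypotheses of all three lemmas. The delicate point, and the essential-freeness analogue of the purely free estimates in \cite{KerSza_almost_2020}, is to verify that discarding the non-free part inside $Z$ costs only a negligible amount of density at each of the finitely many stages, so that the clean recursion above survives; this is precisely the careful estimation of the measures of fixed-point sets against invariant measures that the introduction flags, and it is where I expect essentially all of the work to lie.
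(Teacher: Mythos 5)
Your high-level architecture --- building disjoint clopen castles with \autoref{lem 3rd in afinmeas}, certifying hypothesis~(i) of \autoref{lem second in essfree are afinmeas} through \autoref{lem first in essfree are afinmeas}, running the affine density recursion, and using essential freeness only to enclose the non-free part in clopen sets of small upper Banach density --- is indeed the paper's. But there is a genuine gap exactly where you compressed the setup: the iteration cannot be run with a \emph{single} shape $S$. Hypothesis~(i) of \autoref{lem first in essfree are afinmeas} requires the shapes $E_w$ of the towers built at \emph{earlier} stages to be $\big(T^{-1},\beta(1-\varepsilon)\big)$-invariant, where $T$ is the shape of the \emph{current} stage. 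In your scheme both are (large subsets of) the same set $S$, so already at stage two you need $\left|S^{-1}S\triangle S\right|<\beta(1-\varepsilon)\left|S\right|$. No degree of F{\o}lner invariance of $S$ delivers this, because the obstruction scales with $\left|S\right|$: in $G=\mathbb{Z}$ with $S=\{0,1,\dots,N\}$ one has $S^{-1}S=\{-N,\dots,N\}$, hence $\left|S^{-1}S\triangle S\right|=N$, which exceeds $\beta(1-\varepsilon)\left|S\right|$ for every $\beta<1$ once $N$ is large. So \autoref{lem first in essfree are afinmeas} is inapplicable at the second stage, hypothesis~(i) of \autoref{lem second in essfree are afinmeas} is never certified, and your density recursion never produces its second increment.

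The paper's proof is organized precisely to avoid this. The number of rounds $n$ is fixed \emph{in advance} (via $(1-\varepsilon)^n<\varepsilon$, with $\alpha,\beta$ chosen as in \eqref{eq geometric0}), and one pre-selects a nested chain $F_1\subseteq\cdots\subseteq F_n$ of $(K,\varepsilon)$-invariant sets such that each $F_j$ is $\big(F_i^{-1},\beta(1-\varepsilon)\big)$-invariant for all $i<j$; the algorithm then consumes these in decreasing order of invariance ($F_n$ at stage $1$, down to $F_1$ at stage $n$), so that every shape already used is automatically invariant enough under the inverse of the current one. This pre-selection also rules out your open-ended stopping rule (``iterate until the density clears $1-\varepsilon_0$''): once a shape $E$ has been used, any later shape $T$ containing $1_G$ for which $E$ is $\big(T^{-1},\beta(1-\varepsilon)\big)$-invariant must be contained in the fixed finite set $EE^{-1}$ (nontrivial overlap of $t^{-1}E$ with $E$ forces $t\in EE^{-1}$), so fresh shapes cannot be conjured after the fact; the whole chain, and hence the number of stages, has to be fixed before the first castle is built. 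A smaller, repairable, inaccuracy: condition~(c) of part~(iv) of \autoref{lem 3rd in afinmeas} controls density only at points outside $Z$, so hypothesis~(ii) of \autoref{lem second in essfree are afinmeas} is verified for $B=\big(\bigsqcup_j C_j\big)\cup T\cdot Z_k$ rather than for the footprint itself, and the resulting error $\left|T\right|\cdot\overline{D}(Z_k)$ is absorbed by the factor $\alpha$ in \eqref{small D} --- this is the ``negligible cost'' you flagged, and it does work out; the single-shape issue is the one that does not.
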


\begin{proof}
The ``only if'' implication is the content of \autoref{rem:EssFreeAlmostFinite}, so we will prove the ``if'' part of the statement. Let $K\subseteq G$ be a finite subset, let $\delta>0$, and let $\varepsilon\in(0,\tfrac{1}{2})$ be small enough so that the statement in \autoref{lem invariance} is satisfied. Let $n\in\N$ satisfy $(1-\varepsilon)^n<\varepsilon$. Fix $\beta>0$ and $\alpha\in(0,1)$ so that $\varepsilon(1+\beta)<1$ and
\begin{equation}\label{eq geometric0}\tag{3.13}
\alpha\cdot \frac{1-(1-\varepsilon(1+\beta))^n}{1+\beta}>1-\varepsilon.
\end{equation}

By recursively employing \autoref{rmk Folner}, we can find $(K,\varepsilon)$-invariant finite sets $F_1\subseteq\dots\subseteq F_n\subseteq G$ with $1_G\in F_1$ such that, for each $j=2,\ldots,n$, the set $F_j$ is also $\big(F_i^{-1},\beta(1-\varepsilon)\big)$-invariant, for all $i=1,\ldots,j-1$.

We follow an algorithm to construct, for each $k=1,\ldots, n$, a clopen castle 
$\mathcal{C}_k=\{(S_{i,k},V_{i,k})\}_{i\in I_k}$ such that, if we denote by $C_k$ its footprint, then:
\begin{enumerate}[label=(\arabic*)]
\item All the levels of $\mathcal{C}_k$ have diameter less than $\delta$.
			
\item We have $S_{i,k}\subseteq F_{n-k+1}$ and $\left|S_{i,k}\right|\ge(1-\varepsilon)\cdot\left|F_{n-k+1}\right|$ for all $i\in I_k$.

\item The sets $C_1,\dots, C_k$ are pairwise disjoint.

\item $\bigsqcup\limits_{j=1}^kC_j=\bigsqcup\limits_{j=1}^{k-1}C_j\cup\bigcup\limits_{i\in I_k}F_{n-k+1}\cdot V_{i,k}$.

\item $\underline{D}\Big(\bigsqcup\limits_{j=1}^kC_j\Big)\ge\alpha\varepsilon\sum\limits_{j=0}^{k-1}(1-\varepsilon(1+\beta))^j$.
\end{enumerate}

For the first step, namely for $k=1$, we use essential freeness and \eqref{eq upper dens lim of nbds} to choose a clopen set $Z_1$ containing $X^{F_n}_\mathrm{nf}$ and satisfying 
\begin{align*}\label{small D}\tag{3.14}
\overline{D}(Z_1)<\frac{(1-\alpha)\varepsilon}{\left|F_n\right|}.
\end{align*}
We now apply \autoref{lem 3rd in afinmeas} with $S=F_n$, $Y=\emptyset$ and $Z=Z_1$ to obtain a castle $\mathcal{C}_1=\{(S_{i,1},V_{i,1})\}_{i\in I_1}$ satisfying the conclusion of said lemma. It is evident that conditions (1), (2), (3) and (4) are satisfied, so we check (5). Let $x\in X\setminus Z_1$. Then $\left|C_1\cap F_n\cdot x\right|\ge\varepsilon\left|F_n\right|$. Using this at the last step, we get
\begin{equation*}
\big|\{s\in F_n\colon s\cdot x\in C_1\cup F_n\cdot Z_1\}\big|\ge\big|\{s\in F_n\colon s\cdot x\in C_1\}\big|=\big|C_1\cap F_n\cdot x\big|\ge\varepsilon\big|F_n\big|.
\end{equation*}
On the other hand, for $x\in Z_1$, we (trivially) have
\begin{equation*}
\big|\{s\in F_n\colon s\cdot x\in C_1\cup F_n\cdot Z_1\}\big|=\big|F_n\big|\ge\varepsilon\big|F_n\big|.
\end{equation*}
In particular, we get
\[
\underline{D}(C_1\cup F_n\cdot Z_1)\ge\underline{D}_{F_n}(C_1\cup F_n\cdot Z_1)\ge\varepsilon.
\] 
For $\mu\in M_G(X)$, we have $\mu(C_1\cup F_n\cdot Z_1)\le\mu(C_1)+\left|F_n\right|\cdot\mu(Z_1)$, and, by \eqref{eq densities suprema infima}, we deduce that 
\[
\varepsilon\le\underline{D}(C_1)+\left|F_n\right|\cdot \overline{D}(Z_1)\stackrel{\eqref{small D}}{<}\underline{D}(C_1)+(1-\alpha)\varepsilon.
\] 
We conclude that $\underline{D}(C_1)\ge\alpha\varepsilon$, thus verifying condition~(5) for $k=1$.

Assume that the algorithm has run up to stage $k-1<n$, for some $k\geq 2$. Use essential freeness and \eqref{eq upper dens lim of nbds} to find a clopen set $Z_k$ which contains $X_\mathrm{nf}^{F_{n-k+1}}$ and satisfies $\overline{D}(Z_k)<\frac{(1-\alpha)\varepsilon}{\left|F_{n-k+1}\right|}$. Apply \autoref{lem 3rd in afinmeas} with $S=F_{n-k+1}$, $Y=\bigsqcup_{j=1}^{k-1}C_j$, and $Z=Z_k$, in order to obtain a castle $\mathcal{C}_k=\{(S_{i,k},V_{i,k})\}_{i\in I_k}$. As before, conditions (1), (2), (3) and (4) of the list above follow directly from the statement of \autoref{lem 3rd in afinmeas}, so it suffices to verify condition~(5). As in the first step, we see that 
\[
\underline{D}_{F_{n-k+1}}\bigg(F_{n-k+1}\cdot Z_k\cup \bigsqcup_{j=1}^kC_j\bigg)\ge\varepsilon.
\] 
Therefore, with $T=F_{n-k+1}$, $A=\bigsqcup_{j=1}^{k-1}C_j$ and $B=F_{n-k+1}\cdot Z_k\cup\bigsqcup_{j=1}^kC_j$, condition~(ii) in the statement of \autoref{lem second in essfree are afinmeas} is satisfied. Assume for a moment that $A$ satisfies condition~(i) of \autoref{lem second in essfree are afinmeas} as well; we will prove this shortly. Then, applying \autoref{lem second in essfree are afinmeas} at the first step and using condition~(5) of the algorithm for stage $k-1$ at the second step, we get
\begin{align*}
\underline{D}\bigg(F_{n-k+1}\cdot Z_k \cup\bigsqcup_{j=1}^kC_j\bigg)&\ge(1-\varepsilon(1+\beta))\cdot\underline{D}\bigg(\bigsqcup_{j=1}^{k-1}C_j\bigg)+\varepsilon\\
&\ge\alpha\varepsilon\sum_{j=1}^{k-1}(1-\varepsilon(1+\beta))^j+\varepsilon.
\end{align*}
As argued at the first step of the algorithm, one sees that 
\begin{align*}\label{eq geometric2}
\underline{D}\bigg(F_{n-k+1}\cdot Z_k \cup\bigsqcup_{j=1}^kC_j\bigg)&\le\underline{D}\bigg(\bigsqcup_{j=1}^kC_j\bigg)+\left|F_{n-k+1}\right|\cdot\overline{D}(Z_k)\\
&<\underline{D}\bigg(\bigsqcup_{j=1}^kC_j\bigg)+(1-\alpha)\varepsilon.
\end{align*}
Combining the last two inequalities, we obtain 
\[
\underline{D}\bigg(\bigsqcup_{j=1}^kC_j\bigg)\ge\alpha\varepsilon\sum_{j=0}^{k-1}(1-\varepsilon(1+\beta))^j,
\] 
as required. (Note that the above sum starts at $j=0$, while the
previous one does at $j=1$.)

Thus, the problem of constructing $\mathcal{C}_k$ has been reduced to showing that the set $A=\bigsqcup_{j=1}^{k-1}C_j$ satisfies condition~(i) of \autoref{lem second in essfree are afinmeas}. To that end, set $W=\bigcup_{j=1}^{k-1}\bigsqcup_{i\in I_j}V_{i,j}$ and for each $w\in W$, let $j_w$ denote the smallest $j \in\{1,\dots,k-1\}$ such that $w\in\bigsqcup_{i\in I_j}V_{i,j}$. We wish to apply \autoref{lem first in essfree are afinmeas} with $L=F_n$, $T=F_{n-k+1}$, and $E_w=F_{n-j_w+1}$ for $w\in W$, so we will first check that its assumptions are met. Conditions~(i) and~(ii) of \autoref{lem first in essfree are afinmeas} follow by construction. To see that the collection $\{E_w\cdot w\}_{w\in W}$ is $\varepsilon$-disjoint, given $w\in W$ we let $i$ be the unique index in $I_{j_w}$ such that $w\in V_{i,{j_w}}$, and set $E_w'= S_{i,j_w}$. Note that $\{E_w'\cdot w\}_{w\in W}$ is a collection of pairwise disjoint sets, and that $\left|E_w'\right|\ge(1-\varepsilon)\left|E_w\right|$, since $w\in X^{E_w}_{\mathrm{free}}$. It is left to show that $\bigcup_{w\in W}E_w\cdot w=\bigsqcup_{j=1}^{k-1}C_j$. By definition of the sets $E_w$, we have 
\begin{align*}
\bigcup_{w\in W}E_w\cdot w&=F_n\cdot \bigg(\bigsqcup_{i\in I_1}V_{i,1}\bigg)\cup F_{n-1}\cdot \bigg(\bigsqcup_{i\in I_2}V_{i,2}\setminus\bigsqcup_{i\in I_1}V_{i,1}\bigg)\cup \cdots\\ 
& \ \ \ \cdots \cup F_{n-k+2}\cdot \bigg(\bigsqcup_{i\in I_{k-1}}V_{i,k-1}\setminus\bigcup_{j=1}^{k-2}\bigsqcup_{i\in I_j}V_{i,j}\bigg).
\end{align*}
In the next chain of equalities, we apply condition~(4) of the algorithm recursively, from $k-1$ backwards to $1$, and we also use that $C_1=F_n\cdot \left(\bigsqcup_{i\in I_1} V_{i,1}\right)$, which is true by part~(b) in condition~(iv) of \autoref{lem 3rd in afinmeas}:
\begin{align*}
\bigsqcup_{j=1}^{k-1}C_j&=\bigsqcup_{j=1}^{k-2}C_j\cup F_{n-k+2}\cdot \bigg(\bigsqcup_{i\in I_{k-1}}V_{i,{k-1}}\bigg)\\
&=\ldots\\
&=F_n\cdot \bigg(\bigsqcup_{i\in I_1}V_{i,1}\bigg)\cup F_{n-1}\cdot \bigg(\bigsqcup_{i\in I_2}V_{i,2}\bigg)\cup\dots\cup F_{n-k+2}\cdot \bigg(\bigsqcup_{i\in I_{k-1}}V_{i,k-1}\bigg).
\end{align*}
Using the inclusions $F_1\subseteq\dots\subseteq F_n$, we deduce that 
\begin{align*}
F_n\cdot \bigg(\bigsqcup_{i\in I_1}V_{i,1}\bigg)\cup F_{n-1}\cdot \bigg(\bigsqcup_{i\in I_2}&V_{i,2}\bigg)\cup  \dots\cup F_{n-k+2}\cdot \bigg(\bigsqcup_{i\in I_{k-1}}V_{i,k-1}\bigg)=\\
F_n\cdot \bigg(\bigsqcup_{i\in I_1}V_{i,1}\bigg)\cup F_{n-1}\cdot  &\bigg(\bigsqcup_{i\in I_2} V_{i,2}\setminus\bigsqcup_{i\in I_1}V_{i,1}\bigg) \cup\dots \\
\dots \cup F_{n-k+2}\cdot &\bigg(\bigsqcup_{i\in I_{k-1}}V_{i,k-1}\setminus\bigcup_{j=1}^{k-2}\bigsqcup_{i\in I_j}V_{i,j}\bigg).
\end{align*}

This implies that $\bigcup_{w\in W}E_w\cdot w=\bigsqcup_{j=1}^{k-1}C_j$. Therefore \autoref{lem first in essfree are afinmeas} applies, and both conditions of \autoref{lem second in essfree are afinmeas} are met, as required.
		
After the algorithm has run its full course and we have clopen castles $\mathcal{C}_1,\ldots,\mathcal{C}_n$ as above, we set $\mathcal{C}=\{(S_{i,k},V_{i,k})\}_{\substack{i\in I_k\\ 1\le k\le n}}$, that is, the union of all of them. By condition~(1) of the algorithm, each level in this castle has diameter less than $\delta$. By condition~(2) of the algorithm, as well as \autoref{lem invariance} and our choice of $\varepsilon$, each shape is $(K,\delta)$-invariant. Finally, we have
\begin{align*}
\underline{D}\bigg(\bigsqcup_{\substack{i\in I_k\\ 1\le k\le n}}S_{i,k}\cdot V_{i,k}\bigg)&=\underline{D}\big(\bigsqcup_{j=1}^nC_j\big)\\
&\stackrel{\mathmakebox[\widthof{=}]{\mathrm{(5)}}}{\ge}\alpha\varepsilon \sum_{j=0}^{n-1}(1-\varepsilon(1+\beta))^j\\ 
&=\alpha\varepsilon\frac{1-(1-\varepsilon(1+\beta))^n}{\varepsilon(1+\beta)}\\
&=\alpha\frac{1-(1-\varepsilon(1+\beta))^n}{1+\beta}\\
&\stackrel{\mathmakebox[\widthof{=}]{\text{\eqref{eq geometric0}}}}{>} \ \ 1-\varepsilon.
\end{align*}
This completes the proof.
\end{proof}

We close this section with two interesting consequences of the above theorem. First, we observe that even though the theory developed in \cite{KerSza_almost_2020} (and particularly Sections 4--6 therein) is stated and proved for free actions, once we have \autoref{thm ess free actions on zero dim spaces are afinmeas} at our disposal, some of the results in \cite{KerSza_almost_2020} can be easily extended to cover actions that are essentially free. We state below two such results (corresponding to \cite[Theorem~5.6]{KerSza_almost_2020} and \cite[Theorem~A]{KerSza_almost_2020}) as corollaries. Both proofs are entirely analogous to those in \cite{KerSza_almost_2020}, invoking \autoref{thm ess free actions on zero dim spaces are afinmeas} instead of \cite[Theorem~3.13]{KerSza_almost_2020}. 

\begin{cor}\label{cor ess free sbp iff afinmeas}
Let $G$ be a countably infinite, discrete, amenable group acting on a compact, metrizable space $X$. Assume moreover that the action $G\curvearrowright X$ is essentially free. Then $G\curvearrowright X$ has the small boundary property if and only if it is almost finite in measure.
\end{cor}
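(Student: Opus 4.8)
The plan is to prove the two implications separately, following \cite[Section~5]{KerSza_almost_2020}. The implication ``almost finiteness in measure $\Rightarrow$ small boundary property'' does not interact with fixed-point sets, so the essential-freeness hypothesis plays no role there and the argument is literally that of \cite{KerSza_almost_2020}: from an open castle as in \autoref{def almost finite in measure}, with small-diameter levels and footprint of lower Banach density close to $1$, one extracts a basis of $X$ and shrinks its members to open sets whose boundaries are null for \emph{every} $\mu\in M_G(X)$ simultaneously. Controlling the boundaries uniformly over $M_G(X)$ is the only delicate point, and it is handled via the equality $\overline{D}(\,\cdot\,)=\sup_{\mu}\mu(\,\cdot\,)$ from \eqref{eq densities suprema infima} together with an averaging argument along a F\o lner sequence, exactly as in \cite{KerSza_almost_2020}. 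I therefore concentrate on the converse.

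For ``small boundary property $\Rightarrow$ almost finiteness in measure'' the first step is to pass to a zero-dimensional model. Since $X$ is second countable, the small boundary property yields a countable basis $(U_n)_{n\in\N}$ of open sets with $\mu(\partial U_n)=0$ for all $\mu\in M_G(X)$. Let $\mathcal{B}$ be the $G$-invariant Boolean algebra of subsets of $X$ generated by $\{g\cdot U_n\colon g\in G,\ n\in\N\}$; it is countable, and its Stone space $Z$ is therefore a compact metrizable, zero-dimensional space equipped with an induced action of $G$. Sending an ultrafilter on $\mathcal{B}$ to the unique point of $X$ lying in the intersection of the closures of its members defines a $G$-equivariant continuous surjection $\pi\colon Z\to X$. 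Writing $R=\bigcup_{g,n}g\cdot\partial U_n$, one checks that $\pi$ is injective over $X\setminus R$ and that $R$ is null for every $\mu\in M_G(X)$; consequently $\pi_*\colon M_G(Z)\to M_G(X)$ is an affine homeomorphism and $\pi^{-1}(R)$ is null for every measure in $M_G(Z)$.

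The decisive point is that $G\curvearrowright Z$ is again essentially free: for $g\in G\setminus\{1\}$, equivariance of $\pi$ gives $\mathrm{Fix}(g)\subseteq\pi^{-1}(\mathrm{Fix}(g))$ inside $Z$, so that $\nu(\mathrm{Fix}(g))\le(\pi_*\nu)(\mathrm{Fix}(g))=0$ for every $\nu\in M_G(Z)$, the last equality because $\pi_*\nu\in M_G(X)$ and $G\curvearrowright X$ is essentially free. Hence \autoref{thm ess free actions on zero dim spaces are afinmeas} applies and $G\curvearrowright Z$ is almost finite in measure. To transfer this down, fix $K$, $\delta$ and $\varepsilon$ and choose a clopen castle $\{(S_i,W_i)\}_{i\in I}$ in $Z$ with $(K,\delta)$-invariant shapes, footprint of lower Banach density at least $1-\varepsilon$, and levels so small that every $\pi(s\cdot W_i)$ has diameter below $\delta$. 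Put $V_i=\mathrm{int}\,\pi(W_i)$. By equivariance $s\cdot V_i=\mathrm{int}\,\pi(s\cdot W_i)$, and since the $s\cdot W_i$ are pairwise disjoint clopen sets and $R$ is nowhere dense, the sets $s\cdot V_i$ are pairwise disjoint open subsets of $X$ with $\partial(s\cdot V_i)\subseteq R$, hence with boundaries null for all invariant measures. Thus $\{(S_i,V_i)\}_{i\in I}$ is an open castle in $X$ with the same shapes and small-diameter levels; and because $s\cdot V_i$ differs from $\pi(s\cdot W_i)$ only within the $\mu$-null set $R$ while $\mu(\pi(s\cdot W_i))=(\pi_*^{-1}\mu)(s\cdot W_i)$, the footprint of this castle has, for every $\mu\in M_G(X)$, the same mass as that of the $Z$-castle, so by \eqref{eq densities suprema infima} its lower Banach density is still at least $1-\varepsilon$. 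This is precisely the castle demanded by \autoref{def almost finite in measure}.

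The step I expect to require the most care is the verification of the properties of the Stone-space extension $\pi$ -- that it is a measure isomorphism injective away from the null set $R$ -- since these are exactly what make the small boundary property the correct hypothesis for running the zero-dimensional machinery on a general compact metrizable space. The genuinely new ingredient, however, is the observation that $Z$ inherits essential freeness from $X$; this is what allows \autoref{thm ess free actions on zero dim spaces are afinmeas} to be invoked and is the sole place where the essential-freeness hypothesis is used. Everything else, including the uniform boundary control needed for the converse implication, is routine bookkeeping carried out exactly as in \cite{KerSza_almost_2020}.
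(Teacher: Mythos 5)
Your proposal is correct and takes essentially the same approach as the paper: the paper's proof is exactly Kerr--Szab\'o's zero-dimensional-extension argument for \cite[Theorem~5.6]{KerSza_almost_2020} with \autoref{thm ess free actions on zero dim spaces are afinmeas} invoked in place of their Theorem~3.13, and your one-line verification that essential freeness passes to the extension $Z$ is precisely the observation that makes this substitution legitimate. The only minor slip is that $R=\bigcup_{g,n}g\cdot\partial U_n$ is meager rather than nowhere dense; this is harmless, since your disjointness and boundary claims only require that $R$ contain no nonempty open subset, which the Baire category theorem guarantees.
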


The notion of $m$-comparison for $m\in\mathbb{N}$ is a weaker version of comparison, we refer to \cite[Definition~3.2]{Ker_dimension_2020} for the exact definitions.

\begin{cor}\label{cor ess free af iff sbp and comparison}
Let $G$ be a countably infinite, discrete, amenable group acting on a compact, metrizable space $X$. Assume moreover that the action $G\curvearrowright X$ is essentially free. Then, the following statements are equivalent:
\begin{enumerate}
\item The action $G\curvearrowright X$ is almost finite.

\item The action $G\curvearrowright X$ has the small boundary property and comparison.

\item The action $G\curvearrowright X$ has the small boundary property and $m$-comparison, for some $m\ge0$.
\end{enumerate}
\end{cor}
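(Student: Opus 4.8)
The plan is to transcribe the proof of \cite[Theorem~A]{KerSza_almost_2020} essentially verbatim, the only substitution being that every appeal to \cite[Theorem~3.13]{KerSza_almost_2020} (free actions on zero-dimensional spaces are almost finite in measure) is replaced by an appeal to \autoref{thm ess free actions on zero dim spaces are afinmeas}, and every appeal to \cite[Theorem~5.6]{KerSza_almost_2020} by its essentially free counterpart \autoref{cor ess free sbp iff afinmeas}. Concretely, I would establish the cycle $(2)\Rightarrow(3)\Rightarrow(1)\Rightarrow(2)$. The implication $(2)\Rightarrow(3)$ is immediate: dynamical comparison (\autoref{df:comparison}) is $0$-comparison, and it trivially implies $m$-comparison for every $m\ge 0$ (see \cite[Definition~3.2]{Ker_dimension_2020}), while the small boundary property is carried along unchanged.

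For $(1)\Rightarrow(2)$ I would invoke the structural fact from \cite{Ker_dimension_2020,KerSza_almost_2020} that almost finiteness is equivalent to the conjunction of almost finiteness in measure and dynamical comparison, together with \autoref{cor ess free sbp iff afinmeas}, which identifies almost finiteness in measure with the small boundary property in the essentially free setting. The forward half of the first fact---that almost finiteness yields both almost finiteness in measure and comparison---requires no freeness: given $\varepsilon>0$, one applies \autoref{def almost finite in measure} to a nonempty open set $B$ with $\overline{D}(B)<\varepsilon$, and since dynamical subequivalence is monotone with respect to $\overline{D}$ (via \eqref{eq densities suprema infima}), the closed complement of the footprint has upper Banach density below $\varepsilon$, so the open footprint has lower Banach density at least $1-\varepsilon$, which is condition (iii) of \autoref{def almost finite in measure}. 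Combining this with \autoref{cor ess free sbp iff afinmeas} yields $(2)$.

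The substantive direction is $(3)\Rightarrow(1)$. By \autoref{cor ess free sbp iff afinmeas}, the small boundary property already supplies almost finiteness in measure, so it remains to upgrade ``almost finiteness in measure together with $m$-comparison'' to almost finiteness. This is precisely the tiling-and-stacking argument of \cite{KerSza_almost_2020}: one uses the small boundary property to build open castles with small-boundary bases whose footprints have lower Banach density arbitrarily close to $1$, and then uses $m$-comparison to absorb the remaining sliver into any prescribed nonempty open set, thereby verifying condition (iii$'$) of \autoref{def almost finite in measure}. The essentially free input needed by this argument enters only through the zero-dimensional almost-finiteness-in-measure statement, which is now \autoref{thm ess free actions on zero dim spaces are afinmeas} in place of \cite[Theorem~3.13]{KerSza_almost_2020}. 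Together with the previous paragraph this completes the cycle.

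I expect the main obstacle to be bookkeeping rather than conceptual. One must verify line by line that freeness enters the arguments of \cite{KerSza_almost_2020} \emph{only} through the zero-dimensional almost-finiteness-in-measure statement, which we have already generalized; everywhere else the reasoning is density-theoretic. The one point demanding care is the treatment of the fixed-point sets $\mathrm{Fix}(g)$: by essential freeness they have zero upper Banach density (as in \autoref{rem:EssFreeAlmostFinite} and \eqref{eq densities suprema infima}), and they must be absorbed into closed, clopen-approximable sets of null density without disturbing the castle constructions, exactly as is done in the proofs of \autoref{lem first in essfree are afinmeas} through \autoref{thm ess free actions on zero dim spaces are afinmeas}. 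Since those proofs already show that such null-density fixed-point sets can be discarded at negligible cost, no genuinely new idea beyond them is required.
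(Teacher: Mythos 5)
Your proposal coincides with the paper's own proof: the paper justifies this corollary in exactly the same way, asserting that the proofs in \cite{KerSza_almost_2020} carry over once every appeal to \cite[Theorem~3.13]{KerSza_almost_2020} is replaced by \autoref{thm ess free actions on zero dim spaces are afinmeas} (with \autoref{cor ess free sbp iff afinmeas} playing the role of \cite[Theorem~5.6]{KerSza_almost_2020}), which is precisely your substitution scheme. Your explicit cycle $(2)\Rightarrow(3)\Rightarrow(1)\Rightarrow(2)$ and your observation that freeness enters the arguments of \cite{KerSza_almost_2020} only through the zero-dimensional almost-finiteness-in-measure statement are a faithful, slightly more detailed rendering of that same argument.
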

	
\section{Actions on finite-dimensional spaces and groups of polynomial growth}\label{sec:sbp}

In this section, we show that essentially free actions of amenable groups on
\emph{finite-dimensional} spaces have the small boundary property; see \autoref{thm ess free actions on fin dim spaces have sbp}. 
Specializing to groups with polynomial growth, for which comparison has been
shown in \cite{Nar22} to be automatic in the minimal setting, we give necessary and sufficient conditions for classifiability of the associated crossed product; see \autoref{cor polynomial growth}.

We recall a definition due to Kulesza \cite{Kul95} (see also \cite[Section~3]{Lin95}, and recall that $\dim(\emptyset)=-1$).
	
\begin{df}\label{def general position}
Let $X$ be a compact metrizable finite-dimensional space. A family $(A_i)_{i\in I}$ of subsets of $X$ is said to be in \textit{general position} if for any finite subset $F\subseteq I$, we have 
\[
\dim\Big(\bigcap_{i\in F}A_i\Big)\le\max \big\{-1, \dim(X)-\left|F\right|\big\}.
\]
\end{df}

We isolate the following observation for later use.

\begin{rem}\label{rmk general position upper ban dens}
Note that if a collection $(A_i)_{i\in I}$ of subsets of a compact metrizable space $X$ is in general position, then for any finite subset $J\subseteq I$ with $\left|J\right|\geq \dim(X)+1$, we have that $\bigcap_{i\in J}A_i=\emptyset$. In particular, if $G\curvearrowright X$ is an action, $F\subseteq G$ is a finite subset, and $A\subseteq X$ is a subset such that the collection $(g\cdot A)_{g\in F}$ is in general position, then 
\[
\overline{D}(A)\leq \overline{D}^{F^{-1}}(A)\le\frac{\dim(X)}{\left|F\right|}.
\]
\end{rem}

As it turns out, one can obtain families in general position as images under a free action of certain boundaries of open sets:

\begin{lma}\label{lem boundaries in gp}\cite[Lemma~3.1.8]{Sza15diss}
Let $G\curvearrowright X$ be an action of a discrete group on a compact metrizable space. Let $F\subseteq G$ be a finite set, let $K\subseteq  X^F_\mathrm{free}$ be a closed subset of $X$, and let $V$ be an open neighborhood of $K$ such that $K\subseteq V\subseteq X^F_{\mathrm{free}}$. Then, there exists an open set $W\subseteq X$ satisfying $K\subseteq W\subseteq\overline{W}\subseteq V$, such that the family $\big(g\cdot \partial W\big)_{g\in F}$ is in general position.
\end{lma}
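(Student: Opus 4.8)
The plan is to realize $W$ as a superlevel set of a continuous function interpolating between $K$ and $V$, and then to perturb that function so that the boundary and its $F$-translates fall into general position, the freeness of the action on $V$ being what makes such a perturbation work. First I would use normality of $X$ together with Urysohn's lemma to fix a continuous function $f_0\colon X\to[0,1]$ with $f_0\equiv 1$ on $K$ and $\overline{\{f_0>0\}}\subseteq V$. For any continuous $f\colon X\to[0,1]$ that agrees with $f_0$ on $K$ and on $X\setminus V$, and for any $c\in(0,1)$, the open set $W=\{f>c\}$ automatically satisfies $K\subseteq W\subseteq\overline{W}\subseteq V$, and its boundary lies in the corresponding level set, $\partial W\subseteq f^{-1}(c)\subseteq V$. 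Thus the problem reduces to arranging that $f^{-1}(c)$ is in general position with respect to the translates $\big(g\cdot f^{-1}(c)\big)_{g\in F}$.

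The key observation is the containment
\[
\bigcap_{g\in F'} g\cdot\partial W\ \subseteq\ \bigcap_{g\in F'}\big(f\circ g^{-1}\big)^{-1}(c)\ =\ \Phi_{F'}^{-1}\big((c,\dots,c)\big),
\]
valid for every nonempty $F'\subseteq F$, where $\Phi_{F'}\colon X\to[0,1]^{F'}$ is given by $\Phi_{F'}(y)=\big(f(g^{-1}y)\big)_{g\in F'}$. Since $\partial W\subseteq V\subseteq X^F_{\mathrm{free}}$, whenever $g^{-1}y\in V$ for all $g\in F'$ the points $\{g^{-1}y\colon g\in F'\}$ are pairwise distinct. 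Consequently the coordinate functions $y\mapsto f(g^{-1}y)$ of $\Phi_{F'}$ sample $f$ at distinct points and can be perturbed independently by perturbing $f$ near those points; this is precisely the mechanism by which freeness forces $\Phi_{F'}$ to behave like a \emph{generic} map into the $|F'|$-cube. Working over the complete metric space of admissible functions (those with $f|_K=1$ and $f|_{X\setminus V}=0$), and invoking the dimension-theoretic general position techniques of Kulesza \cite{Kul95} (see also \cite[Section~3]{Lin95}) inside the free region, one shows that for a comeager set of such $f$ the fiber $\Phi_{F'}^{-1}\big((c,\dots,c)\big)$ has dimension at most $\max\{-1,\dim(X)-|F'|\}$. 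As $F$ is finite there are only finitely many subsets $F'$, so intersecting the corresponding comeager sets yields a nonempty family of functions $f$ satisfying all the dimension bounds simultaneously; any such $f$ produces the desired $W$.

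I expect the main obstacle to be the dimension estimate for the fibers $\Phi_{F'}^{-1}\big((c,\dots,c)\big)$, that is, the step where general position is actually produced. The subtlety is that all translates share the single level value $c$, so one cannot simply choose a generic constant: the genericity must be taken over the \emph{shape} of the boundary, and the argument must exploit freeness of $F$ on $V$ to decouple the translated copies. Further care is required because the translates $g\cdot V$ may overlap, so that for a given $y$ only some of the points $g^{-1}y$ lie in $V$; the general position analysis therefore has to be carried out locally on the free part and then assembled. This local-to-global dimension bookkeeping, rather than any formal manipulation, is the technical heart of the argument, and is exactly what the perturbation techniques of \cite{Kul95,Lin95} are designed to handle.
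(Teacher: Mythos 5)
First, a point of reference: the paper does not prove this lemma at all --- it is imported verbatim from Szab\'o's dissertation \cite[Lemma~3.1.8]{Sza15diss} --- so your attempt can only be measured against that cited proof, which is a perturbation/general-position argument in the tradition of Kulesza \cite{Kul95} and Lindenstrauss \cite{Lin95}. Your reductions are correct and follow the same strategy: realizing $W$ as a superlevel set $\{f>c\}$ of an admissible function gives $K\subseteq W\subseteq\overline{W}\subseteq V$ for free, the containment $\bigcap_{g\in F'}g\cdot\partial W\subseteq\Phi_{F'}^{-1}\big((c,\dots,c)\big)$ is valid, and freeness enters exactly as you say. In fact, more is true than you claim: since $f$ vanishes off $V$ and $c>0$, \emph{every} point $y$ of the fiber has all of its samples $g^{-1}y$, $g\in F'$, inside $\{f=c\}\subseteq V\subseteq X^F_{\mathrm{free}}$, and these samples are pairwise distinct (if $g^{-1}y=h^{-1}y$ for $g\neq h$, then $g^{-1}y$ is fixed by $h^{-1}g\in F^{-1}F\setminus\{1\}$, contradicting $g^{-1}y\in X^F_{\mathrm{free}}$). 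This makes your worry about overlaps of the translates $g\cdot V$ vacuous: no local-to-global assembly over a partially free region is needed, because the whole fiber lies in the free part.

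The genuine gap is the sentence in which you invoke ``the dimension-theoretic general position techniques of \cite{Kul95,Lin95}'' to conclude that, for a comeager set of admissible $f$, one has $\dim\Phi_{F'}^{-1}\big((c,\dots,c)\big)\le\max\{-1,\dim(X)-|F'|\}$. That statement \emph{is} the lemma; it is not a quotable result of either reference. Both \cite{Kul95} and \cite{Lin95} treat iterates of a single homeomorphism (essentially the case $G=\Z$), and --- more importantly --- the maps $\Phi_{F'}$ are not generic elements of $C(X,[0,1]^{F'})$: all coordinates are built from the single function $f$, so the classical Hurewicz-type theorem that a generic map into $[0,1]^n$ has all fibers of dimension at most $\dim(X)-n$ cannot be applied off the shelf. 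Closing the gap requires exactly the work you deferred: (i) a \emph{density} argument --- given admissible $f$ and $\varepsilon>0$, cover the compact set $\bigcap_{g\in F'}g\cdot\{f=c\}$ by finitely many open sets $U$ so small that the translates $g^{-1}U$, $g\in F'$, are pairwise disjoint, and use bump-function perturbations of $f$, chosen independently on these disjoint pieces (this is admissible since $\{f=c\}$ is disjoint from $K$ and from $X\setminus V$), to reduce to the classical genericity theorem; (ii) an \emph{openness} argument, which cannot be run on the condition ``$\dim\le k$'' itself (that condition is not open in $f$), but rather on the condition that the fiber admits a finite open cover of small mesh and order at most $k+1$, using upper semicontinuity of the fibers $\Phi_{F'}^{-1}\big((c,\dots,c)\big)$ as $f$ varies; and (iii) the intersection over the finitely many nonempty $F'\subseteq F$. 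Steps (i) and (ii) constitute the actual content of \cite[Lemma~3.1.8]{Sza15diss}; as written, your proposal reduces the lemma to the techniques by which it is proved, rather than proving it.
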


We are ready for the main result of this section. 

\begin{thm}\label{thm ess free actions on fin dim spaces have sbp}
Let $G$ be a countably infinite, discrete, amenable group acting on a compact, metrizable, finite-dimensional space $X$. If the action $G\curvearrowright X$ is essentially free, then it has the small boundary property.
\end{thm}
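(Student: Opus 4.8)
The plan is to prove the small boundary property directly from its definition: I must produce, around an arbitrary point and inside an arbitrary open neighborhood, an open set whose boundary is null for every invariant measure. The key technical input is \autoref{lem boundaries in gp}, which says that one can choose the boundary of a suitable open set so that its translates under a finite set $F$ are in general position, together with \autoref{rmk general position upper ban dens}, which converts general position into the quantitative density bound $\overline{D}(A)\le \dim(X)/|F|$. Since $\overline{D}(\partial W)=\sup_{\mu\in M_G(X)}\mu(\partial W)$ by \eqref{eq densities suprema infima}, making this upper Banach density small for all large $F$ will force $\mu(\partial W)=0$ simultaneously for every $\mu\in M_G(X)$, which is exactly what the small boundary property requires.

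The main obstacle is that \autoref{lem boundaries in gp} only applies on the free part: it requires the closed set $K$ to sit inside $X^F_{\mathrm{free}}$. For an essentially free (but not free) action, a given point $x$ and its neighborhood need not avoid the non-free locus $X^F_{\mathrm{nf}}$, and in fact the fixed-point sets are genuinely nonempty. Here is where essential freeness must be exploited: although $X^F_{\mathrm{nf}}$ is nonempty, it has zero upper Banach density by the second part of \eqref{eq densities suprema infima}, since each $\mathrm{Fix}(g)$ with $g\neq 1$ is null for every $\mu\in M_G(X)$. So the plan is to treat the two regions separately. Given a point $x$ and a target neighborhood $U$, I would first enlarge the relevant finite set $F\subseteq G$ so that $\dim(X)/|F|$ is smaller than a prescribed tolerance. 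I then separate $K=\overline{U'}$ (a small closed neighborhood of $x$ inside $U$) into its intersection with the free part $X^F_{\mathrm{free}}$, where \autoref{lem boundaries in gp} produces a good open set $W$ with $(g\cdot\partial W)_{g\in F}$ in general position, and the leftover piece near $X^F_{\mathrm{nf}}$.

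To handle the leftover piece, I would use that $X^F_{\mathrm{nf}}$ is closed with $\overline{D}(X^F_{\mathrm{nf}})=0$, so by \eqref{eq upper dens lim of nbds} I can surround it by an open set of arbitrarily small upper Banach density, and hence arrange that the part of the constructed boundary coming from near the non-free locus is already measure-negligible for every invariant measure. Concretely, for a basic neighborhood of a point $x\in X^F_{\mathrm{nf}}$ I can simply take the set $W$ to be contained in such a small-density open neighborhood of the non-free locus, so that $\mu(\partial W)\le \mu(\overline W)$ is small uniformly; whereas for $x\in X^F_{\mathrm{free}}$ I apply \autoref{lem boundaries in gp} outright, obtaining $\overline{D}(\partial W)\le \dim(X)/|F|$ via \autoref{rmk general position upper ban dens}. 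Assembling these, every point of $X$ admits arbitrarily small open neighborhoods $W$ with $\sup_{\mu\in M_G(X)}\mu(\partial W)$ as small as desired.

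Finally, to pass from ``arbitrarily small boundary measure'' to ``exactly zero boundary measure,'' I would invoke the standard intersection trick: given $x$ and $U$, produce a nested sequence of open sets $W_1\supseteq W_2\supseteq\cdots$ around $x$ inside $U$ with $\sup_\mu \mu(\partial W_k)\to 0$, and with the $\partial W_k$ pairwise disjoint (achievable by shrinking each $W_{k+1}$ strictly inside $W_k$ and using that boundaries of strictly nested open sets can be made disjoint); then for a fixed $\mu$ the disjoint boundaries $\partial W_k$ have summable, in fact vanishing, measures, so some $W_k$ has $\mu(\partial W_k)$ arbitrarily small. More robustly, since the bound $\dim(X)/|F|$ is uniform over all $\mu\in M_G(X)$ and can be driven below any $\eta$ by enlarging $F$, one directly obtains for each point a neighborhood basis of sets $W$ with $\mu(\partial W)<\eta$ for all $\mu$ and all prescribed $\eta$; taking a countable intersection over a sequence $\eta=1/m$ of such boundaries (chosen pairwise disjoint) yields a single open set whose boundary is genuinely null for every invariant measure, establishing the small boundary property.
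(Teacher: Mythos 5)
Your geometric core is sound and is essentially the paper's construction: fix a finite $F\subseteq G$ with $\dim(X)/\abs{F}$ below the tolerance, use essential freeness together with \eqref{eq densities suprema infima} and \eqref{eq upper dens lim of nbds} to enclose the closed null set $X^F_{\mathrm{nf}}$ in an open set $B$ of small upper Banach density, apply \autoref{lem boundaries in gp} and \autoref{rmk general position upper ban dens} on the free part, and combine the two pieces. Up to that point your argument produces, as you say, open sets whose boundaries have upper Banach density less than any prescribed $\eta>0$.

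The genuine gap is your final step, the passage from ``boundary density $<\eta$ for every prescribed $\eta$'' to ``boundary of measure zero for every invariant measure''; this is precisely the hard part of the theorem, and your argument for it fails. The disjointness trick only handles one measure at a time: if the $\partial W_k$ are pairwise disjoint, then for each \emph{fixed} $\mu$ one has $\mu(\partial W_k)\to 0$, but the index where this becomes small depends on $\mu$, and $M_G(X)$ is in general uncountable, so no single $W_k$ is exhibited with null boundary for all $\mu$ simultaneously. The proposed ``countable intersection'' does not repair this: intersecting the (disjoint) boundaries yields the empty set, while intersecting the open sets $W_k$ yields a set that need not be open, whose interior need not contain the point $x$ (it can be empty), and whose boundary is \emph{not} contained in $\bigcup_k\partial W_k$ nor controlled by $\liminf_k\mu(\partial W_k)$ --- for instance, a decreasing sequence of finite unions of open intervals, each with finite (hence Lebesgue-null) boundary, can intersect to a fat Cantor set, which has empty interior and boundary of positive Lebesgue measure. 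Boundary control simply does not survive countable limiting operations. This is why the paper does not attempt this step by hand: it proves the approximate statement in ``sandwich'' form --- for all open $U\subseteq\overline{U}\subseteq V$ and all $\varepsilon>0$ there is an open $U'$ with $U\subseteq U'\subseteq\overline{U'}\subseteq V$ and $\overline{D}(\partial U')<\varepsilon$ --- and then quotes the implication (v)$\implies$(i) of \cite[Theorem~5.5]{KerSza_almost_2020}, observing that this implication does not use freeness. Note also that the sandwich form is what that citation requires, whereas your bounds are pointwise (a neighborhood basis at each point); upgrading pointwise bounds to the sandwich form by a covering argument is circular, since the per-set tolerance $\varepsilon/k$ requires knowing the cardinality $k$ of the subcover before the sets are chosen. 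To repair the proof, run your construction once, globally, applying \autoref{lem boundaries in gp} to the closed set $\overline{U}\cap(X\setminus B)\subseteq V\cap X^F_{\mathrm{free}}$ (as your second paragraph nearly does), set $U'=W\cup(A\cap B)$ for a suitable buffer $A$, and replace the intersection trick by the citation of \cite[Theorem~5.5]{KerSza_almost_2020}.
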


\begin{proof}
Let $\varepsilon\in(0,1)$ and let $U,V\subseteq X$ be open sets satisfying $\overline{U}\subseteq V$. We claim that there exists an open set $U'\subseteq X$ such that 
\[
U\subseteq U'\subseteq\overline{U'}\subseteq V \ \ \mbox{ and } \ \ \overline{D}(\partial U')<\varepsilon.
\] 
Before proceeding, we note that proving the claim is equivalent to establishing the small boundary property, due to \cite[Theorem~5.5]{KerSza_almost_2020} and the fact that the implication (v)$\implies$(i) therein does not require freeness.

Set $d=\dim(X)<\infty$. Since $G$ is infinite, there exists a finite subset $F\subseteq G$ with $\left|F\right|>2d/\varepsilon$. 
		By essential freeness, we have $\overline{D}(X^F_\mathrm{nf})=0$, so by \eqref{eq upper dens lim of nbds} we can find an open set $B\subseteq X$ such that \[
X^F_\mathrm{nf}\subseteq B \ \ \mbox{ and } \ \ \overline{D}(\overline{B})<\frac{\varepsilon}{2}.
\]
Thus the set $\overline{U}\cap(X\setminus B)$ is a closed subset of $V\cap X^F_\mathrm{free}$, which, in turn, is an open subset of $X^F_\mathrm{free}$. Apply \autoref{lem boundaries in gp} to find an open set $W$ with 
\[
\overline{U}\cap(X\setminus B)\subseteq W\subseteq\overline{W}\subseteq V\cap X_\mathrm{free}^F,
\] 
and such that $(g\partial W)_{g\in F}$ is in general position. In particular, by \autoref{rmk general position upper ban dens}, we have $\overline{D}(\partial W)\le d/\left|F\right|<\varepsilon/2$. Since $\overline{U}\subseteq V$, there exists an open set $A\subseteq X$ with $\overline{U}\subseteq A\subseteq\overline{A}\subseteq V$. 

Set $U'= W\cup(A\cap B)$. We want to show that $U\subseteq U'$. For this, let $x\in U$ be given. If $x\in B$, then $x\in A\cap B\subseteq U'$. On the other hand, if $x\not\in B$, then 
\[
x\in U\cap (X\setminus B)\subseteq W\subseteq U',
\]
so in either case we have $x\in U'$, so $U\subseteq U'$. Moreover, we have
\[
\overline{U'}=\overline{W}\cup\overline{A\cap B}\subseteq \overline{W}\cup\overline{A}\subseteq V.
\] 
Finally, 
\[
\partial U'\subseteq\partial W\cup\partial(A\cap B)\subseteq\partial W\cup\overline{B},
\] 
and thus 
\[
\overline{D}(\partial U')\le\overline{D}(\partial W)+\overline{D}(\overline{B})<\frac{\varepsilon}{2}+\frac{\varepsilon}{2}=\varepsilon,
\] 
as desired.
\end{proof}

Specializing to groups of polynomial growth, we show that when the space is finite-dimensional, the necessary conditions of topological freeness and minimality are in fact \emph{sufficient} for  classifiability\footnote{For the sake of this work, we will say that a $C^*$-algebra is \emph{classifiable} if it satisfies the assumptions of the classification theorem stated in the introduction, namely if it is simple, separable, unital, nuclear, $\mathcal{Z}$-stable and satisfies the UCT.} of the crossed product.

\begin{cor}\label{cor polynomial growth}
Let $G$ be a finitely generated group of polynomial growth acting on a compact metrizable, finite-dimensional space $X$. Then $C(X)\rtimes G$ is classifiable if and only if $G\curvearrowright X$ is minimal and topologically free.
\end{cor}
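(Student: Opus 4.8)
The plan is to handle the two implications separately; the forward direction is immediate and the content lies in the backward direction. For ``only if'', note that a classifiable $C^*$-algebra is in particular simple, and by Archbold--Spielberg \cite{ArcSpi94} simplicity of $C(X)\rtimes G$ forces $G\curvearrowright X$ to be minimal and topologically free. No growth assumption is needed here.

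For ``if'', I would first clear away all the conditions of the classification theorem except $\mathcal{Z}$-stability. Since $G$ has polynomial growth it is amenable, so $G\curvearrowright X$ is an amenable action; hence $C(X)\rtimes G$ is nuclear and, by Tu \cite{Tu99}, satisfies the UCT, while unitality and separability are automatic, and simplicity follows from minimality and topological freeness via \cite{ArcSpi94}. We may assume $G$ is infinite, which is the setting of the results invoked below (for finite $G$, minimality would force $X$ to be finite). It thus remains to prove $\mathcal{Z}$-stability, for which the strategy is to establish almost finiteness of the action and then invoke the known implication that almost finiteness yields a $\mathcal{Z}$-stable crossed product \cite{Ker_dimension_2020}.

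To obtain almost finiteness I would assemble the ingredients now at hand. First, because $G$ has polynomial growth, a minimal topologically free action is automatically \emph{essentially} free by \cite[Corollary~2.4]{Jos21}; this is the step that moves us into the regime where our machinery applies. Second, since $X$ is finite-dimensional, \autoref{thm ess free actions on fin dim spaces have sbp} gives the small boundary property. Third, comparison holds automatically for minimal actions of groups of polynomial growth by the main result of \cite{Nar22}. Finally, for essentially free actions \autoref{cor ess free af iff sbp and comparison} shows that the small boundary property together with comparison is equivalent to almost finiteness. Chaining these yields almost finiteness, hence $\mathcal{Z}$-stability, and therefore classifiability.

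The main point to be careful about is the closing implication ``almost finite $\Rightarrow$ $\mathcal{Z}$-stable'': this is classically recorded for free actions, so one should confirm that the argument survives in the merely essentially free case. This is reasonable to expect because the towers of an open castle have pairwise disjoint levels, which already forces the action to be free on each base (i.e.\ each base lies in $X^{S_i}_{\mathrm{free}}$), so the matrix structure exploited in that implication remains available. Beyond this, every step is a direct citation, and the only bookkeeping is to ensure minimality is carried throughout, as it is used both for comparison \cite{Nar22} and for upgrading topological to essential freeness \cite{Jos21}.
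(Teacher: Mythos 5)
Your proof is correct and takes essentially the same route as the paper's: Archbold--Spielberg \cite{ArcSpi94} for the ``only if'' direction, and for the converse the chain \cite[Corollary~2.4]{Jos21} (upgrading topological to essential freeness), \autoref{thm ess free actions on fin dim spaces have sbp} (small boundary property), \cite{Nar22} (comparison), \autoref{cor ess free af iff sbp and comparison} (almost finiteness), and then $\mathcal{Z}$-stability via \cite[Theorem~12.4]{Ker_dimension_2020}, whose proof the paper likewise notes does not employ freeness. Your explicit reduction to infinite $G$ and your observation that tower bases automatically lie in the free part are slightly more careful than the paper's wording, but the argument is the same.
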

	
\begin{proof}
By \cite{ArcSpi94}, for actions of amenable groups, minimality and topological freeness are equivalent to simplicity of the crossed product. In particular, the ``only if'' implication is automatic.

We turn to the converse, for which, as explained in the introduction, it only remains to check $\mathcal{Z}$-stability of the crossed product. 
 
By Gromov's theorem, finitely generated groups with polynomial growth are virtually nilpotent. In particular, it follows that $G$ has at most countably many subgroups (see, for example, page 2 of the introduction in \cite{Jos21}). By \cite[Corollary~2.4]{Jos21}, any topologically free and minimal action of $G$ is automatically essentially free. Using \autoref{thm ess free actions on fin dim spaces have sbp} we deduce that $G\curvearrowright X$ has the small boundary property.  By the main result of \cite{Nar22}, any minimal action of a finitely generated group with polynomial growth has comparison, and thus \autoref{cor ess free af iff sbp and comparison} implies that $G\curvearrowright X$ is almost finite. Finally, it follows from \cite[Theorem~12.4]{Ker_dimension_2020}, the proof of which does not employ freeness, that $C(X)\rtimes G$ is $\mathcal{Z}$-stable and thus classifiable.
\end{proof}

\section{Joseph's allosteric actions and $\mathcal{Z}$-stability of their crossed products}\label{sec allosteric}
	
At this point we wish to highlight the fact that none of the results in previous sections apply to actions that are topologically free but not essentially free.
	
In the minimal setting, the difference between essential and topological freeness is a subtle one. Following Joseph \cite{Jos21}, a minimal action of an amenable group that is topologically free but not essentially free is called \emph{allosteric} (this agrees with \cite[Definition~1.1]{Jos21}, since $M_G(X)\neq\emptyset$ by amenability of $G$). It was an open problem for some time whether there exists a minimal, allosteric action of an amenable group and the first such examples were recently constructed by Joseph in \cite{Jos23}. It was left as an open question in Joseph's work to determine whether the crossed products of his actions are classifiable.

In this section, we analyze Joseph's work on allosteric actions of amenable groups and show that the crossed products of his examples are $\mathcal{Z}$-stable (and thus classifiable). We do not develop a general framework for establishing $\mathcal{Z}$-stability for crossed products of general topologically free actions, but it is conceivable that the methods developed here are a first step towards extending the theory of almost finiteness to the topologically free setting. We begin with some preliminary lemmas.

\begin{lma}
\label{WeddingCakeLemmaFinite}
Let $\Lambda$ be a group, let $\Lambda_0  \subseteq \Lambda$ be a finite, symmetric subset, let $n\in\mathbb{N}$, let $Y$ be a finite set, let $\Lambda \curvearrowright Y$ be an action, and let $Y_0 \subseteq Y$ be a subset. Then there exists a function $f \colon Y \to [0,1]$ such that
\begin{enumerate}
\item $f(y) = 0$ for all $y \in Y_0$,
			
\item $\abs{f(\lambda\cdot y) - f(y)} \le 1/n$ for all $y \in Y$ and all $\lambda \in \Lambda_0 $, and

\item $\abs{\left\{y \in Y \colon f(y) \neq 1\right\}} \le \big(1 + \abs{\Lambda_0 } + \abs{\Lambda_0 }^2 + \ldots + \abs{\Lambda_0 }^{n-1}\big)\abs{Y_0}$.
\end{enumerate}
\end{lma}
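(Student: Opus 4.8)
The plan is to build $f$ as a layered ``distance'' function to $Y_0$, measured in the Schreier graph of the $\Lambda_0$-action and rescaled so that it climbs from $0$ to $1$ in $n$ steps of size $1/n$ (this is the ``wedding cake''). Since $\Lambda_0$ is symmetric, for each $k\ge 0$ the set $\Lambda_0^{\le k}\coloneqq\bigcup_{j=0}^{k}\Lambda_0^{j}$ (with $\Lambda_0^0=\{1\}$) is again symmetric, and I define the successive \emph{layers} $B_k\coloneqq\Lambda_0^{\le k}\cdot Y_0$, so that $Y_0=B_0\subseteq B_1\subseteq\cdots$. For $y\in Y$ I let $d(y)$ be the least $k$ with $y\in B_k$, setting $d(y)=\infty$ if $y$ lies in no $B_k$, and finally I put $f(y)\coloneqq\min\{d(y)/n,\,1\}$.

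Condition (1) is then immediate, since $d\equiv 0$ on $Y_0=B_0$. For condition (3), the set $\{y\colon f(y)\ne 1\}$ is exactly $\{y\colon d(y)\le n-1\}=B_{n-1}$, whose cardinality is controlled by
\[
\abs{B_{n-1}}\le\sum_{j=0}^{n-1}\abs{\Lambda_0^{j}\cdot Y_0}\le\sum_{j=0}^{n-1}\abs{\Lambda_0}^{j}\abs{Y_0}=\big(1+\abs{\Lambda_0}+\cdots+\abs{\Lambda_0}^{n-1}\big)\abs{Y_0},
\]
which is the desired estimate (here I use $\abs{\Lambda_0^j}\le\abs{\Lambda_0}^j$).

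The only step requiring an argument is condition (2), which reduces to showing that $d$ changes by at most $1$ under the action of any $\lambda\in\Lambda_0$; since $s\mapsto\min\{s,1\}$ is $1$-Lipschitz, this yields $\abs{f(\lambda\cdot y)-f(y)}\le\abs{d(\lambda\cdot y)-d(y)}/n\le 1/n$. To bound the change in $d$, suppose $d(y)=k<\infty$, so there is $\mu\in\Lambda_0^{\le k}$ with $\mu\cdot y\in Y_0$; then $\mu\lambda^{-1}\in\Lambda_0^{\le k+1}$ and $(\mu\lambda^{-1})\cdot(\lambda\cdot y)=\mu\cdot y\in Y_0$, so $d(\lambda\cdot y)\le k+1$. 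Applying the same reasoning to $\lambda^{-1}\in\Lambda_0$ and the point $\lambda\cdot y$ gives $d(y)\le d(\lambda\cdot y)+1$, hence $\abs{d(\lambda\cdot y)-d(y)}\le 1$; and if $d(y)=\infty$ then $d(\lambda\cdot y)=\infty$ as well, so both values of $f$ equal $1$. I expect this to be the main (and essentially only) obstacle, and it is precisely the place where the symmetry hypothesis on $\Lambda_0$ is used, as it is what makes the two-sided bound on $d$ valid.
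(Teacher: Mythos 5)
Your proposal is correct and follows essentially the same route as the paper: your layer function $d$ is exactly the Schreier-graph distance $\rho(\cdot,Y_0)$ used there, and your $f=\min\{d/n,1\}$ coincides with the paper's function, with the same Lipschitz argument for (2) and the same counting bound $\bigl|\bigcup_{j=0}^{n-1}\Lambda_0^j\cdot Y_0\bigr|\le\bigl(1+|\Lambda_0|+\cdots+|\Lambda_0|^{n-1}\bigr)|Y_0|$ for (3). If anything, you spell out the two-sided estimate $|d(\lambda\cdot y)-d(y)|\le 1$ (and the role of symmetry in it) more explicitly than the paper does.
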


\begin{proof}
Let $\rho \colon Y \times Y \to \N_0 \cup \{\infty\}$ be the distance function in the Shreier graph of $(Y, \Lambda_0 )$, which is given by
\[
\rho(y,z) = \min\big\{m \in \N_0 \colon y = \lambda_1\cdots \lambda_m z,  \mbox{ with } \lambda_1,\ldots,\lambda_m \in \Lambda_0 \big\}     
\]
for all $y,z\in Y$. Given $y\in Y$, we set $\rho(y,Y_0)=\min\limits_{z\in Y_0}\rho(y,z)$. For $y\in Y$, we now define
\[
f(y) = \begin{cases}
1 & \text{if } \rho(y, Y_0) \geq n, \\
\frac{\rho(y,Y_0)}{n} & \text{otherwise}.
\end{cases}
\]
We claim that $f$ satisfies the properties in the statement. Condition (1) is clear. Given $y\in Y$ and $\lambda \in \Lambda_0 $, we have 
\[
\rho(y, Y_0)-1 \leq \rho(\lambda\cdot y,Y_0)\leq \rho(y, Y_0)+1
\] 
and thus $|f(\lambda\cdot y)-f(y)|\leq 1/n$, as desired. Finally, adopting the convention that $\Lambda_0 ^0=\{1\}$, we have
\[
\big\{y\in Y\colon f(y)\neq 1\big\}=\big\{y\in Y\colon \rho(y,Y_0)\leq n-1\big\}=\bigcup_{m=0}^{n-1}\Lambda_0 ^m Y_0,
\]
which has cardinality at most $\big(1 + \abs{\Lambda_0 } + \abs{\Lambda_0 }^2 + \ldots + \abs{\Lambda_0 }^{n-1}\big)\abs{Y_0}$, as desired.
\end{proof}

The following technical lemma is essentially due to Weiss; see \cite[Pages 260-261]{Wei_monotileable_2001}. The result implies that a residually finite, amenable group $\Lambda$ admits a F{\o}lner sequence whose elements are images under sections of finite quotients of $\Lambda$.

\begin{lma}\label{AlmostEquivariantLiftLemma}
Let $\Lambda$ be a countably infinite, residually finite group and let $(\Lambda_n)_{n\in\mathbb{N}}$ be a collection of normal subgroups with finite index such that for every finite subset $F\subseteq \Lambda$ there exists $n\in\mathbb{N}$ such that the restriction of the quotient map $\Lambda\to \Lambda/\Lambda_n$ to $F$ is injective. For $n\in\mathbb{N}$, write $\pi_n\colon \Lambda\to \Lambda/\Lambda_n$ for the quotient map. Then $\Lambda$ is amenable if and only if for every finite subset $K \subseteq \Lambda$ and every $\varepsilon>0$ there exist $n \in \N$ and a map $\varphi \colon \Lambda/\Lambda_n \to \Lambda$ such that:
\begin{enumerate}
\item $\pi_n \circ \varphi = \mathrm{id}_{\Lambda/\Lambda_n}$,

\item $\big|\big\{t \in \Lambda/\Lambda_n \colon \lambda\varphi(t) \neq \varphi(\pi_n(\lambda)t) \ \text{for some} \ \lambda \in K\big\}\big| < \varepsilon \abs{\Lambda/\Lambda_n}$.
\end{enumerate}
In other words, $\varphi$ is an $(K,\varepsilon)$-equivariant section for $\pi_n\colon\Lambda\to \Lambda/\Lambda_n$.
\end{lma}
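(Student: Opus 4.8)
The plan is to prove the equivalence after first reformulating the conclusion in terms of transversals. Write $Q_n=\Lambda/\Lambda_n$. A section $\varphi\colon Q_n\to\Lambda$ with $\pi_n\circ\varphi=\mathrm{id}$ is exactly the same datum as a transversal $\Gamma=\varphi(Q_n)$ for $\Lambda_n$ in $\Lambda$, i.e.\ a set meeting every coset of $\Lambda_n$ exactly once; under this correspondence $\pi_n|_\Gamma$ is a bijection onto $Q_n$. I would first observe that, for a fixed $t\in Q_n$ with $\gamma=\varphi(t)$ and a fixed $\lambda\in K$, the equality $\lambda\varphi(t)=\varphi(\pi_n(\lambda)t)$ holds if and only if $\lambda\gamma\in\Gamma$, since $\pi_n(\lambda\gamma)=\pi_n(\lambda)t$ and $\Gamma$ meets that coset only once. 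Hence condition~(2) is equivalent to asking that $\Gamma$ be \emph{left-$K$-closed on most of itself}, that is $\big|\{\gamma\in\Gamma\colon K\gamma\not\subseteq\Gamma\}\big|<\varepsilon\,|\Gamma|$. Both implications will be proved in this reformulation.

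For the implication that the section condition forces amenability, fix a finite $K\subseteq\Lambda$ with $1\in K$ and $\delta>0$, apply the hypothesis with $\varepsilon=\delta/|K|$, and set $\Gamma=\varphi(Q_n)$. The reformulation produces a subset $B\subseteq Q_n$ with $|B|<\varepsilon|Q_n|$ off which $K\gamma\subseteq\Gamma$; consequently $K\Gamma\setminus\Gamma\subseteq\bigcup_{t\in B}K\varphi(t)$ has at most $|K|\,|B|<\delta\,|\Gamma|$ elements, while $1\in K$ gives $\Gamma\subseteq K\Gamma$. Thus $\Gamma$ is a $(K,\delta)$-invariant finite set. Since the injectivity hypothesis forces $[\Lambda:\Lambda_n]\geq|F|$ for any prescribed finite $F$ and a suitable $n$, the transversals obtained this way have unbounded cardinality, so $\Lambda$ admits arbitrarily invariant finite subsets and is therefore amenable.

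The substantial direction is that amenability yields such sections. Given $K$ (which I take symmetric with $1\in K$) and $\varepsilon>0$, I would invoke the Ornstein--Weiss quasi-tiling theorem to fix finitely many F\o lner ``tiles'' $F_1,\dots,F_k\subseteq\Lambda$, each $(K,\varepsilon/3)$-invariant, with the property that sufficiently invariant regions can be covered, up to an $\varepsilon/3$-fraction, by pairwise disjoint right-translates of the $F_j$. Next, using residual finiteness together with the injectivity hypothesis, choose $n$ so large that $\pi_n$ is injective on a finite set $D$ containing all products needed to run the quasi-tiling (for instance $\bigcup_j F_j$ together with $F_i^{-1}F_j$ and $KF_j$); this is precisely the statement that the finite groups $Q_n$ converge to $\Lambda$ in the local (Benjamini--Schramm) sense, so that the faithful copies $S_j=\pi_n(F_j)$ carry, inside $Q_n$, the same invariance and overlap data as the $F_j$ do in $\Lambda$. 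Running the greedy Ornstein--Weiss procedure inside the \emph{finite} group $Q_n$ then produces pairwise disjoint right-translates $S_j\,c$ covering at least $(1-\varepsilon/3)\,|Q_n|$, and since $Q_n$ has no boundary there is no boundary loss. Finally I define $\varphi$ by lifting: on a tile $S_j c$, writing $t=\pi_n(f)\,c$ with $f\in F_j$ and a fixed lift $\widetilde c$ of $c$, set $\varphi(t)=f\,\widetilde c$, and on the uncovered remainder let $\varphi$ be an arbitrary section. For $\gamma=f\widetilde c$ and $\lambda\in K$ one has $\lambda\gamma=(\lambda f)\widetilde c$, which again lies in the tile whenever $Kf\subseteq F_j$; by $(K,\varepsilon/3)$-invariance this fails for at most $(\varepsilon/3)|F_j|$ of the $f\in F_j$. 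Summing the uncovered part and these tile boundaries bounds the bad set by $\varepsilon|Q_n|$, as required.

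The main obstacle is the middle step: transporting the quasi-tiling from the infinite group $\Lambda$ to the finite quotients $Q_n$. One must verify that injectivity of $\pi_n$ on a set whose diameter exceeds that of all the tiles makes the bounded-range combinatorics of the Ornstein--Weiss procedure (maximality of packings, the covering estimates) valid verbatim inside $Q_n$, and that the absence of boundary in the finite group $Q_n$ upgrades the quasi-tiling of invariant subsets of $\Lambda$ into a genuine $(1-\varepsilon)$-covering of \emph{all} of $Q_n$. This is exactly the content of Weiss' argument in \cite[Pages 260--261]{Wei_monotileable_2001}; everything else, namely the transversal reformulation and the two counting estimates, is routine bookkeeping.
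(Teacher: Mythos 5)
Your proposal is correct and, at its core, follows the same route as the paper. The transversal reformulation---$\lambda\varphi(t)=\varphi(\pi_n(\lambda)t)$ if and only if $\lambda\varphi(t)\in\Gamma=\varphi(\Lambda/\Lambda_n)$---is precisely the observation the paper's proof turns on, and your argument that the section condition implies amenability is the paper's argument up to bookkeeping (the paper fixes $\lambda\in K$ and shows $|\lambda F\cap F|\ge(1-\varepsilon)|F|$, hence $|\lambda F\triangle F|<2\varepsilon|F|$; you adjoin $1$ to $K$ and bound $|K\Gamma\setminus\Gamma|$). The real difference is how the hard direction is organized. The paper treats Weiss' construction as a black box: it cites \cite{Wei_monotileable_2001} for the existence of a section whose image $F$ is $(K,\varepsilon/|K|)$-invariant, and then obtains the approximate equivariance in condition~(2) \emph{a posteriori} by a short counting argument, namely an injective map from the bad set $E$ into $K\times(KF\setminus F)$. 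You instead unfold Weiss' argument---Ornstein--Weiss quasi-tiling transported to a finite quotient whose injectivity radius exceeds the tile sizes---and build $\varphi$ by lifting tiles, so that equivariance is read off directly from the tiling rather than deduced from invariance of the image. Both routes hinge on the same ingredient (Weiss, pp.~260--261), which you acknowledge; the paper's packaging is shorter exactly because the counting map lets it avoid re-running the tiling machinery, while yours would be more self-contained if completed. If you do want to complete it, two points need repair: first, Ornstein--Weiss produces $\varepsilon$-disjoint rather than pairwise disjoint translates, and overlapping tiles would make your lifted definition of $\varphi$ ambiguous and spoil the equivariance count, so you must either pass to disjoint cores of the tiles (which remain sufficiently invariant by \autoref{lem invariance}) or invoke an exact tiling theorem such as \cite{DowHucZha19}; second, the set $\{f\in F_j\colon Kf\not\subseteq F_j\}$ is only bounded by $|K|\cdot|KF_j\setminus F_j|$, so your tiles should be taken $(K,\varepsilon/(3|K|))$-invariant rather than $(K,\varepsilon/3)$-invariant.
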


\begin{proof}
We begin with the ``if'' implication. Given a finite subset $K\subseteq \Lambda$ and $\varepsilon>0$, let $n\in\mathbb{N}$ and $\varphi\colon \Lambda/\Lambda_n\to\Lambda$ be as in the statement. Set $F=\varphi(\Lambda/\Lambda_n)$, and observe that $|F|=|\Lambda/\Lambda_n|$. Given $\lambda\in \Lambda$ and $t\in \Lambda/\Lambda_n$, we have $\pi_n(\lambda\varphi(t))=\pi_n(\lambda)t$. Using condition~(1), we see that $\lambda\varphi(t)$ belongs to $F$ if and only if $\lambda\varphi(t)=\varphi(\pi_n(\lambda)t)$, and thus
\[
\lambda F\cap F=\lambda\varphi\Big(\big\{t\in \Lambda/\Lambda_n\colon \lambda\varphi(t) = \varphi(\pi_n(\lambda)t)\big\}\Big).
\]
Fix $\lambda\in K$. Using condition~(2) and the equality above, we deduce that $|\lambda F\cap F|\geq (1-\varepsilon)|F|$. Thus, for $\lambda\in K$, we have
\[
\frac{|\lambda F\triangle F|}{|F|}=2\Big(1-\frac{|\lambda F\cap F|}{|F|}\Big)<2\varepsilon.
\] This implies that $\max_{\lambda\in K}\frac{|\lambda F\triangle F|}{|F|}<2\varepsilon$, and so $\Lambda$ is amenable.
		
For the ``only if'' implication, assume that $\Lambda$ is amenable. Let $K\subseteq \Lambda$ be a finite subset, and let $\varepsilon>0$. The conclusion of the discussion in \cite[Section~2]{Wei_monotileable_2001} gives us the existence of $n\in\mathbb{N}$ and a section $\varphi \colon \Lambda/\Lambda_n\to \Lambda$ such that $\varphi(\Lambda/\Lambda_n)$ is $(K,\frac{\varepsilon}{|K|})$-invariant. Set $F=\varphi(\Lambda/\Lambda_n)$ and 
\[
E=\{t \in \Lambda/\Lambda_n \colon \lambda\varphi(t) \neq \varphi(\pi_n(\lambda)t) \ \text{for some} \ \lambda \in K\}.
\]
Note that $|KF\setminus F |<\varepsilon\tfrac{|F|}{|K|}$ by invariance, and that  $|F|=|\Lambda/\Lambda_n|$ since $\varphi$ is injective. It remains to prove that $|E|< \varepsilon \abs{\Lambda/\Lambda_n}$. It suffices to show that there is an injective map
\[
\rho \colon E\to K\times \big( KF\setminus F\big),
\]
since the codomain has cardinality at most $\varepsilon|\Lambda/\Lambda_n|$.

We proceed to construct such a map. Let $t\in E$ and choose $\lambda\in K$ with $\lambda\varphi(t)\neq \varphi(\pi_n(\lambda)t)$. We set $\rho(t)=(\lambda,\lambda\varphi(t))$. To show that $\rho$ is well-defined, we need to show that $\lambda\varphi(t)$ does not belong to $F=\varphi(\Lambda/\Lambda_n)$. Arguing by contradiction, suppose that there exists $s\in \Lambda/\Lambda_n$ with $\lambda\varphi(t)=\varphi(s)$. Applying $\pi_n$ to this equality gives $\pi_n(\lambda)t=s$, and applying $\varphi$ again gives $\varphi(\pi_n(\lambda)t)=\varphi(s)=\lambda\varphi(t)$, which is the desired contradiction.

Finally, we claim that $\rho$ is injective. Let $s,t\in E$ satisfy $\rho(s)=\rho(t)$. Thus there exists $\lambda\in K$ such that $\lambda\varphi(s)= \lambda\varphi(t)$. Hence $\varphi(s)=\varphi(t)$ and since $\varphi$ is injective, we conclude that $s=t$. This finishes the proof.
\end{proof}

Next, we outline the construction of the first examples of allosteric actions of amenable groups due to Joseph; see \cite{Jos23}. Recall that for a prime number $p$, a countable group $\Lambda$ is said to be \emph{residually $p$-finite} if there exists a sequence $(\Lambda_n)_{n\in\mathbb{N}}$ of normal subgroups of $\Lambda$ such that $\bigcap_{n\in\mathbb{N}}\Lambda_n=\{1\}$ and $\Lambda/\Lambda_n$ is a finite group whose cardinality is a power of $p$. 
	
\begin{constr}\label{con Matthieu's examples}{\normalfont{\cite[Theorem~3.2 and Theorem~4.1]{Jos23}}}
Let $\Lambda$ be a countable group that is residually $p$-finite for infinitely many distinct prime numbers $p$. (For example, we may take $\Lambda=\mathbb{Z}$). Fix $d\in\N$, and identify the direct sum $\bigoplus_{\Lambda}\mathbb{Z}^d$ with the group of finitely supported functions $\Lambda\to\mathbb{Z}^d$, with pointwise addition. Denote by $\beta\colon \Lambda \to\mathrm{Aut}(\bigoplus_{\Lambda}\mathbb{Z}^d)$ the action given by shifting the copies of $\mathbb{Z}^d$, namely 
\[
\beta_{\lambda}(f)(\lambda')=f(\lambda^{-1}\lambda')
\]
for all $f\in \bigoplus_{\Lambda}\mathbb{Z}^d$ and $\lambda,\lambda'\in\Lambda$. Set 
\[
\mathbb{Z}^d\wr\Lambda=\bigg(\bigoplus_{\Lambda}\mathbb{Z}^d\bigg)\rtimes_{\beta}\Lambda,
\] 
which is the \textit{wreath product} of $\mathbb{Z}^d$ by $\Lambda$. In other words, $\mathbb{Z}^d\wr\Lambda$ is the cartesian product $\bigoplus_{\Lambda}\mathbb{Z}^d\times \Lambda$ with operations given by
\[
(f,\lambda)\cdot(f',\lambda')=(f+\beta_{\lambda}( f'),\lambda\lambda')\;\;\text{ and }(f,\lambda)^{-1}=(-\beta_\lambda^{-1}(f),\lambda^{-1})
\]
for all $f,f'\in \bigoplus_{\Lambda}\mathbb{Z}^d$ and $\lambda,\lambda'\in\Lambda$. 
We will from now on abbreviate $\mathbb{Z}^d\wr\Lambda$ to $\Gamma$. For each $\gamma = (g, \delta) \in \Gamma \setminus \{1\}$, choose
\begin{itemize}
\item a prime number $p_\gamma$,

\item a positive number $\varepsilon_\gamma$,

\item a natural number $l_\gamma$,

\item a normal subgroup $\Lambda_\gamma \lhd \Lambda$, and

\item a finite subset $E_\gamma \subseteq \Lambda/\Lambda_\gamma$ containing the trivial coset $\Lambda_\gamma$,
\end{itemize}
such that, with $\pi_\gamma \colon \Lambda \to \Lambda/\Lambda_\gamma$ denoting the canonical quotient map, the following are satisfied:
\begin{enumerate}
\item the primes $p_\gamma$, for $\gamma\in\Gamma\setminus\{1\}$, are pairwise distinct, and $\Lambda$ is residually $p_\gamma$-finite,

\item $g(\Lambda) \cap \ {(p_\gamma\mathbb{Z})}^d = \{0\}$ in $\mathbb{Z}^d$,
			
\item $\prod_{\gamma \neq 1}\ {(1 - \varepsilon_\gamma)} > 0$,

\item $l_\gamma > \abs{\supp(g)}$,

\item $[\Lambda : \Lambda_\gamma]$ is a power of $p_\gamma$ and $\varepsilon_\gamma[\Lambda : \Lambda_\gamma] > l_\gamma$,

\item $\supp(g)^{-1}\supp(g) \cap \Lambda_\gamma = \{1\}$,

\item if $\delta \neq 1_\Lambda$ then $\delta \not \in \Lambda_\gamma$, and

\item $\pi_\gamma(\supp(g)) \subseteq E_\gamma$ and $\abs{E_\gamma}$ = $l_\gamma$.
\end{enumerate}

For $\gamma\in\Gamma\setminus\{1\}$ and upon identifying an element $q\in E_{\gamma}\subseteq \Lambda/\Lambda_{\gamma}$ with the corresponding coset in $\Lambda$, set 
\[
A_\gamma = \Big\{h \in \bigoplus_{\Lambda}\mathbb{Z}^d \colon \sum_{\lambda\in q} h(\lambda) \in \ {(p_\gamma\mathbb{Z})}^d \mbox{ for all }  q\in E_{\gamma}\Big\}.
\]
It is shown in \cite[Theorem~4.2, Claim~1]{Jos23} that $A_{\gamma}$ is a subgroup of $\bigoplus_\Lambda \mathbb{Z}^d$, which is invariant under the restriction of $\beta$ to $\Lambda_{\gamma}$. Thus, setting $\Gamma_\gamma = A_\gamma \rtimes \Lambda_\gamma$, it follows that $\Gamma_{\gamma}$ is a subgroup of $\Gamma$, and its index is a (finite) power of $p_\gamma$, see \cite[Theorem~4.2, Claim~2]{Jos23}. For a finite subset $F\subseteq \Gamma\setminus\{1\}$, set $X_F=\Gamma / \bigcap_{\gamma \in F} \Gamma_\gamma$. Then $X_F$ is finite, and the action of $\Gamma$ on itself by left translation induces an action on $X_F$.

Set 
\[
X= \varprojlim_{F \Subset \Gamma\setminus\{1\}} X_F,
\]
which is a Cantor space. Since for $F\subseteq F'$, the canonical quotient map $X_{F'}\to X_{F}$ is $\Gamma$-equivariant, we obtain an inverse limit action $\Gamma\curvearrowright X$. For a finite subset $F\subseteq \Gamma\setminus\{1\}$, we write $q_F\colon X\to X_F$ for the canonical quotient map, and note that $q_F$ is $\Gamma$-equivariant. By \cite[Theorem~3.2 and Theorem~4.1]{Jos23}, the action of $\Gamma$ on $X$ is allosteric.

\end{constr}	
	
We will need convenient notation for the generators of $\Gamma$, which we now establish. We will regard both $\bigoplus_{\Lambda}\mathbb{Z}^d$ and $\Lambda$ as subgroups of $\Gamma$ in a canonical way.

\begin{nota}\label{nota:generators}
For $j=1,\ldots,d$, we set $e_j=(0, \ldots, 1, \ldots, 0)\in \mathbb{Z}^d$, where $1$ is in the $j$-th coordinate. Given $j=1,\ldots,d$ and $\lambda \in \Lambda$, we write $\xi^\lambda_{j}\in \bigoplus_{\Lambda}\mathbb{Z}^d\subseteq \Gamma$ for the function given by $\xi^\lambda_j(\lambda')=\delta_{\lambda,\lambda'}e_j$ for $\lambda'\in\Lambda$. Note that $\xi_j^\lambda=\beta_{\lambda}(\xi_j^{1})$ and $\{\xi_j^\lambda\colon j=1,\ldots,d,\lambda\in\Lambda\}$ generates the copy of $\bigoplus_{\Lambda}\mathbb{Z}^d$ in $\Gamma$. Moreover, if $\mathcal{L}\subseteq \Lambda$ is any generating set, then $\Gamma$ is generated by 
\[
\mathcal{L}\cup \big\{\xi_j^1\colon j=1,\ldots,d\big\}.
\]
For further use, we note here that
\[
\label{eqn:IdentityGamma}\tag{5.1}
\lambda\cdot (n\xi_j^{\lambda'})= n\xi_{j}^{\lambda\lambda'}\cdot\lambda 
\]
for all $\lambda,\lambda'\in\Lambda$, all $j=1,\ldots,d$, and $n\in \mathbb{Z}$.
\end{nota}

\begin{cor}\label{cor: cor to con and section lemma}
Adopt the assumptions and notations introduced in \autoref{con Matthieu's examples}, and in particular set $\Gamma= \mathbb{Z}^d\wr \Lambda$. Assume that $\Lambda$ is amenable. Let $F\subseteq\Gamma$ and $\Lambda_0\subseteq\Lambda$ be finite subsets, let $n\in\mathbb{N}$ and let $\eta>0$. Then there exists a finite subset $E\subseteq\Gamma\setminus\{1\}$ such that, with $\Lambda_E=\bigcap_{\gamma\in E}\Lambda_{\gamma}$ and $\pi\colon \Lambda\to \Lambda/\Lambda_E$ the quotient map, we have
\begin{enumerate}
\item $E\cap F=\emptyset,$

\item there exists a map $\varphi\colon\Lambda/\Lambda_E\to\Lambda$ satisfying $\pi\circ\varphi=\mathrm{id}_{\Lambda/\Lambda_E}$ and
\[
\abs{\bigg\{t\in\Lambda/\Lambda_E\colon \lambda\varphi(t)\ne\varphi(\lambda t)\text{ for some }\lambda\in\Lambda_0\mbox{ or }\varphi(t)\in\bigcup_{\gamma\in E}\Lambda_\gamma\bigg\}}<\eta\cdot|\Lambda/\Lambda_E|,
\]
\item $p_\gamma>n$ for all $\gamma\in E$.
\end{enumerate}
\end{cor}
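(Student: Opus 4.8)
The plan is to produce $E$ and $\varphi$ by a single application of \autoref{AlmostEquivariantLiftLemma} to a well-chosen decreasing sequence of subgroups of the form $\Lambda_{E_k}$, after first isolating the part of the bad set in condition~(2) that does \emph{not} depend on the section. Writing $\pi\colon\Lambda\to\Lambda/\Lambda_E$ for the quotient map and using $\Lambda_E\subseteq\Lambda_\gamma$ (so that $\pi^{-1}(\pi(\Lambda_\gamma))=\Lambda_\gamma$), the fact that $\varphi$ is a section of $\pi$ gives
\[
\big\{t\in\Lambda/\Lambda_E\colon \varphi(t)\in\textstyle\bigcup_{\gamma\in E}\Lambda_\gamma\big\}=\bigcup_{\gamma\in E}\Lambda_\gamma/\Lambda_E,
\]
independently of $\varphi$. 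Its cardinality is at most $\sum_{\gamma\in E}[\Lambda_\gamma:\Lambda_E]=|\Lambda/\Lambda_E|\cdot\sum_{\gamma\in E}[\Lambda:\Lambda_\gamma]^{-1}$, so this part of the bad set has relative size at most $\sum_{\gamma\in E}[\Lambda:\Lambda_\gamma]^{-1}$. Since the equivariance-failure part will be controlled by the lift lemma with tolerance $\eta/2$, it suffices to build $E$ with $E\cap F=\emptyset$, with $p_\gamma>n$ for all $\gamma\in E$ and with $\sum_{\gamma\in E}[\Lambda:\Lambda_\gamma]^{-1}<\eta/2$, sitting inside a sequence of subgroups to which \autoref{AlmostEquivariantLiftLemma} applies.

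The heart of the argument is the construction of a countable family $\mathcal{E}\subseteq\Gamma\setminus\{1\}$ that simultaneously separates the points of $\Lambda$ (so that the lift lemma is applicable) and has a summably small collection of indices. I would enumerate $\Lambda\setminus\{1\}=\{\lambda^{(j)}\}_{j\in\mathbb{N}}$ (note that a nontrivial group that is residually $p$-finite for infinitely many $p$ is countably infinite) and, for each $j$, select $\gamma_j=(g_j,\lambda^{(j)})\in\Gamma$. By condition~(7) of \autoref{con Matthieu's examples}, the nontriviality of the $\Lambda$-component $\lambda^{(j)}$ forces $\lambda^{(j)}\notin\Lambda_{\gamma_j}$ for \emph{every} choice of $g_j$; hence $\Lambda_{\gamma_j}\ne\Lambda$ and, by condition~(5), $[\Lambda:\Lambda_{\gamma_j}]\ge p_{\gamma_j}$. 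As $g_j$ varies over the infinitely many elements with fixed $\Lambda$-component $\lambda^{(j)}$, the primes $p_{\gamma_j}$ are pairwise distinct by condition~(1), so I can pick $g_j$ with $p_{\gamma_j}>\max\{n,\,2^{j+1}/\eta\}$ and $\gamma_j\notin F$, excluding only finitely many values. This yields $[\Lambda:\Lambda_{\gamma_j}]^{-1}<\eta\,2^{-(j+1)}$, whence $\sum_{j}[\Lambda:\Lambda_{\gamma_j}]^{-1}<\eta/2$, while $\bigcap_{j}\Lambda_{\gamma_j}=\{1\}$ because $\gamma_j$ removes $\lambda^{(j)}$. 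Setting $E_k=\{\gamma_1,\dots,\gamma_k\}$, the groups $\Lambda_{E_k}=\bigcap_{j\le k}\Lambda_{\gamma_j}$ form a decreasing sequence of finite-index normal subgroups of $\Lambda$ with trivial intersection, and therefore satisfy the separation hypothesis of \autoref{AlmostEquivariantLiftLemma}.

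With this sequence in hand, I would invoke \autoref{AlmostEquivariantLiftLemma} (using amenability of $\Lambda$) with $K=\Lambda_0$ and tolerance $\eta/2$ to obtain some index $k$ and a section $\varphi\colon\Lambda/\Lambda_{E_k}\to\Lambda$ of $\pi$ whose equivariance-failure set $\{t\colon\lambda\varphi(t)\ne\varphi(\pi(\lambda)t)\text{ for some }\lambda\in\Lambda_0\}$ has cardinality less than $(\eta/2)\,|\Lambda/\Lambda_{E_k}|$. I then set $E=E_k$. Conditions~(1) and~(3) hold because every $\gamma_j$ was chosen with $\gamma_j\notin F$ and $p_{\gamma_j}>n$. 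For condition~(2), the bad set is the union of the equivariance-failure set and the set $\{t\colon\varphi(t)\in\bigcup_{\gamma\in E}\Lambda_\gamma\}$; the former has cardinality $<(\eta/2)\,|\Lambda/\Lambda_E|$ by the lift lemma, and the latter has cardinality $\le|\Lambda/\Lambda_E|\sum_{\gamma\in E}[\Lambda:\Lambda_\gamma]^{-1}<(\eta/2)\,|\Lambda/\Lambda_E|$ by the first paragraph, so their union has cardinality $<\eta\,|\Lambda/\Lambda_E|$, as required.

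I expect the \textbf{main obstacle} to be the tension exhibited in the second paragraph: the separation property needed to run \autoref{AlmostEquivariantLiftLemma} forces $\mathcal{E}$ to be rich enough to kill every nontrivial element of $\Lambda$, whereas the density bound on $\bigcup_{\gamma\in E}\Lambda_\gamma/\Lambda_E$ forces the indices $[\Lambda:\Lambda_\gamma]$ to be large. The reconciliation rests on the flexibility granted by conditions~(1), (5) and~(7): any single $\lambda^{(j)}$ can be separated by \emph{any} group element with $\Lambda$-component $\lambda^{(j)}$, and among these there are elements with arbitrarily large prime $p_\gamma$, hence arbitrarily large index, which permits a geometrically decaying choice of $[\Lambda:\Lambda_{\gamma_j}]^{-1}$. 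A minor point worth noting is that the smallness estimate $\sum_{\gamma\in E_k}[\Lambda:\Lambda_\gamma]^{-1}<\eta/2$ must survive the \emph{a priori} unknown truncation $k$ returned by the lift lemma; this is automatic, since every partial sum is dominated by the full sum, which is already $<\eta/2$.
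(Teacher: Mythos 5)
Your proof is correct, and it rests on the same two pillars as the paper's: the bad set in condition~(2) is split into the equivariance-failure part, controlled by \autoref{AlmostEquivariantLiftLemma} with tolerance $\eta/2$, and the section-independent part $\bigcup_{\gamma\in E}\Lambda_\gamma/\Lambda_E$, controlled by the index estimate $\sum_{\gamma\in E}[\Lambda_\gamma:\Lambda_E]<\tfrac{\eta}{2}\,\abs{\Lambda/\Lambda_E}$; moreover, in both arguments condition~(7) of \autoref{con Matthieu's examples} provides the separation needed for the lift lemma, while condition~(1) provides, for any prescribed nontrivial $\Lambda$-component, elements $(g,\delta)$ avoiding a given finite set and carrying an arbitrarily large prime. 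Where you genuinely differ is in how the countable family of subgroups required by \autoref{AlmostEquivariantLiftLemma} is organized, and in the mechanism behind the sum bound. The paper feeds the lemma the family $(\Lambda_E)_E$ indexed by \emph{all} finite sets $E\subseteq\Gamma\setminus\{1\}$ satisfying $E\cap F=\emptyset$, $p_\gamma>n$ and $\sum_{\gamma\in E}[\Lambda:\Lambda_\gamma]^{-1}<\eta/2$, and verifies the separation hypothesis for each finite $\Theta\subseteq\Lambda$ by a tailored admissible set $\{\gamma_\theta\colon \theta\in\Theta^{-1}\Theta\setminus\{1\}\}$; admissibility of that set rests on the convergence of $\sum_{\gamma\neq 1}[\Lambda:\Lambda_\gamma]^{-1}$, which is extracted from conditions~(3) and~(5), together with an exceptional-set argument. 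You instead build one explicit nested sequence $E_1\subseteq E_2\subseteq\cdots$ from an enumeration of $\Lambda\setminus\{1\}$, forcing geometric decay $[\Lambda:\Lambda_{\gamma_j}]^{-1}<\eta\,2^{-(j+1)}$ by taking $p_{\gamma_j}>\max\{n,2^{j+1}/\eta\}$, which is legitimate because $[\Lambda:\Lambda_{\gamma_j}]\geq p_{\gamma_j}$ by conditions~(5) and~(7); separation then comes for free from decreasingness plus trivial intersection, and, as you correctly note, the bound survives whatever truncation the lemma returns. The net effect is that your argument never invokes condition~(3) (summability of the $\varepsilon_\gamma$), so it is marginally more self-contained, whereas the paper's version needs no enumeration or nesting and uses separation sets of size at most $\abs{\Theta^{-1}\Theta}$. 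Both arguments are complete and correct.
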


\begin{proof}
Note that, by condition (5) of \autoref{con Matthieu's examples} we have $\frac{1}{[\Lambda:\Lambda_\gamma]}<\varepsilon_\gamma$ and by condition (3) we have that $\sum_{\gamma\ne 1}\frac{1}{[\Lambda:\Lambda_\gamma]}<\infty$. Denote by $\mathcal{F}(\Gamma)$ the (countable) collection of all finite subsets $E$ of $\Gamma\setminus \{1\}$ such that $E \cap F = \emptyset$, $p_\gamma > n$ for all $\gamma \in E$ and $\sum_{\gamma\in E}\frac{1}{[\Lambda:\Lambda_\gamma]}<\eta/2$. Then $(\Lambda_E)_{E\in \mathcal{F}(\Gamma)}$ is a countable collection of finite index normal subgroups of $\Lambda$.

Let us show that for every finite set $\Theta \subseteq \Lambda$ there exists $H\in\mathcal{F}(\Gamma)$ such that the quotient map $\pi_H \colon \Lambda \to \Lambda/\Lambda_H$ is injective on $\Theta$, in order to apply \autoref{AlmostEquivariantLiftLemma}. Since $\sum_{\gamma\ne 1}\frac{1}{[\Lambda:\Lambda_\gamma]}<\infty$, there is a finite set $K\subseteq\Gamma\setminus\{1\}$ so that if $K'\subseteq\Gamma\setminus\{1\}$ satisfies $K\cap K'=\emptyset$, then $\sum_{\gamma\in K'}\frac{1}{[\Lambda:\Lambda_\gamma]}<\eta/2$. Let $\theta_1, \theta_2 \in \Theta$ with $\theta_1\neq\theta_2$ be given, and set $\theta = \theta_1^{-1}\theta_2 \in \Theta^{-1}\Theta \setminus \{1\}$. Since the set $K\cup F\cup\{\gamma \in \Gamma\colon p_\gamma \le n\}$ is finite, there exists $h_\theta \in \bigoplus_\Lambda \mathbb{Z}^d$ such that $\gamma_\theta\coloneqq (h_\theta, \theta) \notin K\cup F$ and $p_{\gamma_\theta} > n$. By property (7) in \autoref{con Matthieu's examples}, we have $\pi_{\gamma_\theta}(\theta) \ne 1_{\Lambda/\Lambda_{\gamma_\theta}}$ and therefore $\pi_{\gamma_\theta}(\theta_1) \ne \pi_{\gamma_\theta}(\theta_2)$. Hence putting $H\coloneqq \{\gamma_\theta\colon \theta\in \Theta^{-1}\Theta\setminus\{1\}\}$ yields a finite set in $\mathcal{F}(\Gamma)$ such that $\pi_H$ is injective on $\Theta$. By \autoref{AlmostEquivariantLiftLemma}, there exists some $E\in\mathcal{F}(\Gamma)$ and a map $\varphi\colon\Lambda/\Lambda_E\to \Lambda$ such that $\pi_E\circ\varphi=\mathrm{id}_{\Lambda/\Lambda_E}$ and
\[
\big|\big\{t \in \Lambda/\Lambda_E \colon \lambda\varphi(t) \neq \varphi(\pi_n(\lambda)t) \ \text{for some} \ \lambda \in \Lambda_0\big\}\big| < \frac{\eta}{2} \abs{\Lambda/\Lambda_E}.
\]
Moreover, by definition of $\mathcal{F}(\Gamma)$ we have that $\sum_{\gamma\in E}\frac{1}{[\Lambda:\Lambda_\gamma]}<\eta/2$, therefore
\[
\abs{\bigg\{t\in\Lambda/\Lambda_E\colon \varphi(t)\in\bigcup_{\gamma\in E}\Lambda_\gamma\bigg\}}\le\sum_{\gamma\in E}[\Lambda_\gamma:\Lambda_E]=\sum_{\gamma\in E}\frac{[\Lambda:\Lambda_E]}{[\Lambda:\Lambda_\gamma]}<\frac{\eta}{2}|\Lambda/\Lambda_E|
\]
and thus (2) is verified.
\end{proof}

For positive elements $a,b$ in a $C^*$-algebra $A$, we say that $a$ is \emph{Cuntz subequivalent} to $b$ (in $A$), written $a\precsim b$, if there exists a sequence $(d_n)_{n\in\mathbb{N}}$ in $A$ such that $\lim_{n\to \infty}\|a-d_nbd_n^*\|=0$. (We refer the reader to either \cite[Chapter 2]{AntPerThi_tensor_2018} or \cite{GarPer_modern_2022} for surveys on Cuntz comparison and the Cuntz semigroup.)

We recall the following definition from \cite{HirOro13}.

\begin{df}\label{df:trZst}(\cite[Definition~2.1]{HirOro13}).
Let $A$ be a unital $C^*$-algebra. We say that $A$ is \emph{tracially $\mathcal{Z}$-stable} if $A\neq \mathbb{C}$ and for every finite subset $S\subseteq A$, every $\varepsilon>0$, every $a\in A_+\setminus\{0\}$ and every $n\in\mathbb{N}$, there exists a completely positive contractive order zero map $\rho\colon M_n\to A$ such that
\begin{enumerate} 
\item $1-\rho(1)\precsim a$, and 

\item for any $x\in M_n$ and $b\in S$ we have $\|\rho(x)b-b\rho(x)\|\leq \varepsilon\|x\|$.
\end{enumerate}
\end{df}

Given $n\in\mathbb{N}$, let $\{e_{i,j}\}_{i,j=1}^n$ denote a system of matrix units for $M_n$. Upon replacing $\varepsilon$ by $\varepsilon/n^2$ in the definition above, condition (2) can be equivalently replaced with $\|\rho(e_{i,j})b-b\rho(e_{i,j})\|\le\varepsilon$ for all $b\in S$ and all $i,j=1,\dots,n$. We will use this at the end of the proof of \autoref{thm allosteric actions Z-stable}.

In practice, tracial $\mathcal{Z}$-stability is easier to establish than $\mathcal{Z}$-stability. For simple, separable, unital, nuclear $C^*$-algebras, it is shown in \cite[Theorem~4.1]{HirOro13} that tracial $\mathcal{Z}$-stability is in fact equivalent to $\mathcal{Z}$-stability, making it a useful criterion to apply in concrete examples.

For a positive function $a\in C(X)$ on a compact Hausdorff space $X$, we write
$\mathrm{supp}(a)$ for its open support, namely the open set $a^{-1}((0,\infty))$. The following is implicit in the literature (it can be deduced, for example, from the arguments in \cite[Lemma 12.3]{Ker_dimension_2020}), and we isolate it here for later use.

\begin{prop}\label{prop:DynCompCuntz}
Let $\Gamma$ be a discrete group, let $X$ be a compact Hausdorff space, and let $\Gamma\curvearrowright X$ be an action with dynamical comparison. Let $a,b\in C(X)$ be positive functions satisfying 
\[
\mu\big(\mathrm{supp}(a)\big)<\mu\big(\mathrm{supp}(b)\big)
\]
for all $\Gamma$-invariant Borel probability measures $\mu$ on $X$. Then $a\precsim b$ in $C(X)\rtimes_{\mathrm{r}}\Gamma$.
\end{prop}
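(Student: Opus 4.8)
The plan is to transfer the dynamical subequivalence furnished by comparison into a Cuntz subequivalence inside the crossed product, following the mechanism of \cite[Lemma~12.3]{Ker_dimension_2020}. Write $U=\mathrm{supp}(a)$ and $V=\mathrm{supp}(b)$, so that the hypothesis reads $\mu(U)<\mu(V)$ for every $\Gamma$-invariant Borel probability measure $\mu$ on $X$. By R{\o}rdam's lemma it suffices to prove $(a-\varepsilon)_+\precsim b$ for each fixed $\varepsilon>0$; I would therefore fix $\varepsilon>0$ and set $C=\{x\in X\colon a(x)\ge\varepsilon\}$, a compact subset of the open set $U$.

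First I would invoke dynamical comparison for the pair $(U,V)$ to obtain $U\precsim V$ in the dynamical sense of \autoref{df:comparison}. Applying that definition to the closed set $C\subseteq U$ yields a finite open cover $\{U_i\}_{i=1}^k$ of $C$ together with elements $g_1,\dots,g_k\in\Gamma$ such that the translates $g_iU_i$ are pairwise disjoint and contained in $V$. I would then choose a partition of unity $(\phi_i)_{i=1}^k$ subordinate to this cover, with $0\le\phi_i\le1$, $\mathrm{supp}(\phi_i)\subseteq U_i$, and $\sum_i\phi_i=1$ on $C$.

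Writing $u_\gamma$ for the canonical unitaries of $C(X)\rtimes_{\mathrm r}\Gamma$ (so that $u_\gamma h u_\gamma^*=h\circ\gamma^{-1}$ has open support $\gamma\cdot\mathrm{supp}(h)$ for $h\in C(X)$), I would put $f=(a-\varepsilon)_+$ and $b_i=u_{g_i}\,(\phi_i f)^{1/2}$. Then $b_i^*b_i=\phi_i f$, so $\sum_{i=1}^k b_i^*b_i=f$ because $\sum_i\phi_i=1$ on $\mathrm{supp}(f)\subseteq C$. Moreover each $b_ib_i^*=g_i\cdot(\phi_i f)$ is a positive function with support inside $g_iU_i$; as the sets $g_iU_i$ are pairwise disjoint, the functions $b_ib_i^*$ are mutually orthogonal and $\sum_i b_ib_i^*$ is a positive element of $C(X)$ whose open support is contained in $V$. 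Standard Cuntz-semigroup manipulations then give
\[
f=\sum_{i=1}^k b_i^*b_i\precsim\bigoplus_{i=1}^k b_i^*b_i\sim\bigoplus_{i=1}^k b_ib_i^*\sim\sum_{i=1}^k b_ib_i^*,
\]
where the first step uses $\sum_i x_i\precsim\bigoplus_i x_i$ for positive $x_i$, the middle step the equivalence $x^*x\sim xx^*$ applied summand by summand, and the last step orthogonality. Finally, since $\mathrm{supp}\big(\sum_i b_ib_i^*\big)\subseteq V=\mathrm{supp}(b)$, a routine functional-calculus argument inside $C(X)$ shows $\sum_i b_ib_i^*\precsim b$, and this subequivalence persists in $C(X)\rtimes_{\mathrm r}\Gamma$. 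Chaining these relations yields $(a-\varepsilon)_+\precsim b$, and letting $\varepsilon\to0^+$ gives $a\precsim b$.

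The only genuinely delicate point is the passage from the covering data to orthogonal ranges: the cover $\{U_i\}$ of $C$ will in general overlap, so the source elements $b_i^*b_i$ are not orthogonal, but what the argument actually needs is orthogonality of the \emph{ranges} $b_ib_i^*$, which is guaranteed precisely by the disjointness of the translates $g_iU_i$ coming from dynamical subequivalence. Everything else is bookkeeping with the cut-downs $(a-\varepsilon)_+$ in order to remain within compact supports while applying the definition of dynamical subequivalence.
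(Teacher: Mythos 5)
Your proposal is correct and takes essentially the same approach as the paper: the paper offers no written proof of this proposition, stating only that it can be deduced from the arguments in Kerr's Lemma~12.3 of \cite{Ker_dimension_2020}, and your argument (cutting down to the compact level set $\{a\ge\varepsilon\}$, applying dynamical subequivalence to get translates with pairwise disjoint images inside $\mathrm{supp}(b)$, passing from $\sum_i b_i^*b_i$ to $\sum_i b_ib_i^*$ via standard Cuntz-semigroup facts, and finishing with R{\o}rdam's lemma) is precisely that mechanism written out in full. The steps you label as standard (orthogonality of ranges giving $\bigoplus_i b_ib_i^*\sim\sum_i b_ib_i^*$, and $g\precsim h$ in $C(X)$ whenever $\mathrm{supp}(g)\subseteq\mathrm{supp}(h)$) are indeed routine, so there is no gap.
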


We now come to the main result of this section: the crossed products of Joseph's actions are $\mathcal{Z}$-stable. In particular, since they are also simple, separable, unital, nuclear, and satisfy the UCT, these $C^*$-algebras are classifiable.

\begin{thm}\label{thm allosteric actions Z-stable}
Let $\Lambda$ be a countable amenable group that is residually $p$-finite for infinitely many distinct prime numbers $p$, and let $d\in\N$. Set $\Gamma=\mathbb{Z}^d\wr\Lambda$ and let $\alpha$ denote an allosteric action of $\Gamma$ on the Cantor space $X$ obtained following the procedure described in \autoref{con Matthieu's examples}. Then, the crossed product $C(X)\rtimes_{\alpha}\Gamma$ is $\mathcal{Z}$-stable. 
\end{thm}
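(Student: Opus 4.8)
The plan is to verify tracial $\mathcal{Z}$-stability of $C(X)\rtimes_\alpha\Gamma$ (which, by \cite[Theorem~4.1]{HirOro13}, is equivalent to $\mathcal{Z}$-stability since the crossed product is simple, separable, unital and nuclear). Fix $n\in\mathbb{N}$, a finite $S\subseteq C(X)\rtimes_\alpha\Gamma$, $\varepsilon>0$ and a nonzero positive $a$. We need a c.p.c.\ order zero map $\rho\colon M_n\to C(X)\rtimes_\alpha\Gamma$ that almost commutes with $S$ and whose defect $1-\rho(1)$ is Cuntz-subequivalent to $a$. The key structural feature of \autoref{con Matthieu's examples} is that $\Gamma=\bigl(\bigoplus_\Lambda\mathbb{Z}^d\bigr)\rtimes_\beta\Lambda$, and the copy of $\bigoplus_\Lambda\mathbb{Z}^d$ acts \emph{freely}; the non-freeness of the allosteric action lives entirely along the $\Lambda$-direction. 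The strategy is therefore to build Rokhlin towers for the order-zero map along a single free generator $\xi_j^1$ (or a product of such), while arranging that these towers are approximately invariant under $\Lambda$, so that they almost commute with the $\Lambda$-part of $S$ as well.

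\textbf{Construction of the towers.} First I would reduce the finite set $S$ to a finite subset $F\subseteq\Gamma$ of group elements and functions in $C(X)$, choosing a level $X_{F_0}$ of the inverse limit on which the relevant functions are (approximately) supported. Using \autoref{cor: cor to con and section lemma}, applied with the prescribed $\Lambda_0\subseteq\Lambda$ (the $\Lambda$-parts of the elements of $F$) and suitable $n,\eta$, I obtain a finite $E\subseteq\Gamma\setminus\{1\}$ disjoint from $F$, together with an almost-equivariant section $\varphi\colon\Lambda/\Lambda_E\to\Lambda$. This section lets me transport the free shift action of a generator $\xi_j^1$ across the finite quotient $X_E=\Gamma/\bigcap_{\gamma\in E}\Gamma_\gamma$ in a way that is equivariant off an $\eta$-small error set. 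On $X_E$, the element $\xi_j^1$ together with the relation \eqref{eqn:IdentityGamma} generates a genuinely free $\mathbb{Z}^d$-type dynamics, so I can tile $X_E$ (minus the small error coming from the section and from the cosets meeting $\bigcup_{\gamma\in E}\Lambda_\gamma$) by Rokhlin intervals of length $n$ in this free direction. Pulling these clopen tiles back along $q_E\colon X\to X_E$ gives clopen Rokhlin towers in $X$.

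\textbf{The order-zero map.} From the towers I define $\rho\colon M_n\to C(X)\rtimes_\alpha\Gamma$ by sending $e_{i,j}$ to (a continuous, slightly smeared version of) the partial isometry that shifts the $i$-th level of each tower to the $j$-th level along the chosen free generator; continuity of the tower functions will be arranged using \autoref{WeddingCakeLemmaFinite} to interpolate the tower heights, guaranteeing the estimate $\|\rho(\lambda\cdot x)-\rho(x)\|\le 1/n$-type smallness that makes $\rho$ almost commute with the $\Lambda$-direction. The near-commutation with the free generators in $F$ is automatic from the Rokhlin structure, and near-commutation with the $\Lambda$-part follows from the almost-invariance of the towers under $\varphi$. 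Finally, the defect $1-\rho(1)$ has open support contained in the union of the small error sets and the uncovered "boundary" levels, whose measure (with respect to every $\Gamma$-invariant measure) is small; since $\Gamma\curvearrowright X$ has dynamical comparison (it is allosteric, hence topologically free and minimal, and comparison holds for these examples), \autoref{prop:DynCompCuntz} yields $1-\rho(1)\precsim a$.

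\textbf{The main obstacle.} The hard part is not any single estimate but the \emph{simultaneous} control of two directions: producing Rokhlin towers that are long in the free $\mathbb{Z}^d$-direction yet \emph{invariant} under the non-free $\Lambda$-action. Ordinary almost finiteness fails here (these actions are not essentially free, by \autoref{rem:EssFreeAlmostFinite} and the discussion after \autoref{thmintro: equivalence}), so I cannot invoke \autoref{thm ess free actions on zero dim spaces are afinmeas}. Everything hinges on the specific wreath-product form: the relation \eqref{eqn:IdentityGamma} says that conjugating the free generator $\xi_j^1$ by $\lambda\in\Lambda$ only shifts its $\Lambda$-index, so a tower built from $\xi_j^1$ can be made $\Lambda$-invariant up to the section error controlled by \autoref{cor: cor to con and section lemma}. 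Reconciling the combinatorics of the finite quotients $\Lambda/\Lambda_E$ (which govern where the action on $X_E$ is free, via conditions (2) and (6) of \autoref{con Matthieu's examples}) with the Rokhlin tiling, while keeping all error sets of uniformly small upper Banach density, is the delicate technical heart of the argument.
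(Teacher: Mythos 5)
Your proposal follows the paper's architecture very closely: reduce to tracial $\mathcal{Z}$-stability via \cite[Theorem~4.1]{HirOro13}, build Rokhlin-type towers on the finite quotient $X_E$ along the free $\bigoplus_\Lambda\mathbb{Z}^d$-direction using the almost-equivariant section from \autoref{cor: cor to con and section lemma}, damp the resulting order-zero map near the bad set $Y_0$ with the function from \autoref{WeddingCakeLemmaFinite} so as to gain approximate invariance in the $\Lambda$-direction, and dispose of the defect $1-\rho(1)$ via dynamical comparison and \autoref{prop:DynCompCuntz}. This is exactly what the paper does, and your identification of the wreath-product relation \eqref{eqn:IdentityGamma} as the enabling mechanism is correct.

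There is, however, one genuine gap: you never address commutation of $\rho$ with the \emph{function} part of $S$. After your own reduction, $S$ contains a finite set of functions in $C(X_{F_0})$, but your verification of condition (2) of \autoref{df:trZst} treats only group elements (``free generators in $F$'' and ``the $\Lambda$-part''). This omission is not routine. A tower shifted by $\xi_j^1$ does not even approximately commute with $C(X_{F_0})$: the element $\xi_j^1$ acts nontrivially on $X_{F_0}$ (indeed $\xi_j^1\notin A_\kappa$ for $\kappa\in F_0$, since its sum over the trivial coset of $E_\kappa$ is $e_j\notin(p_\kappa\mathbb{Z})^d$), and for a matrix unit of the form $\mathbbm{1}_{V}u_{g}$, with $V$ a tower level and $g$ the shift, the commutator with $h\in C(X_{F_0})$ equals $\mathbbm{1}_{V}\big(h\circ g^{-1}-h\big)u_{g}$, whose norm compares the (locally constant) values of $h$ on two \emph{different} levels; making the levels have small diameter does not help. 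The paper's fix is a specific arithmetic idea your sketch does not contain: every tower shift is taken to be a multiple of $P\xi_1^{\varphi(t)}$, where $P=\prod_{\kappa\in F}p_\kappa$. Since $p_\kappa\xi\cdot\Gamma_\kappa\subseteq\Gamma_\kappa$, the element $P\xi$ acts trivially on $X_F$ (this is \eqref{eq:5.1}), so the shift unitaries commute \emph{exactly} with $C(X_F)$; and because the primes attached to $E$ are distinct from those attached to $F$ (condition (1) of \autoref{con Matthieu's examples} together with $E\cap F=\emptyset$), $P$ is coprime to $Q=\prod_{\gamma\in E}p_\gamma$, so the Chinese Remainder Theorem still shows that the translates $jP\xi_1^{\varphi(t)}\big(\varphi(t)W\big)$, $0\le j\le Q-1$, of the congruence-defined base $W$ partition the fiber of $X_E$ over $t$, which is what makes the height-$n$ tower decomposition possible. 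Your parenthetical ``(or a product of such)'' could in principle accommodate $P\xi_1^{\varphi(t)}$, but nothing in your sketch identifies the problem this product is needed to solve, and with the shift $\xi_j^1$ as written the map fails condition (2) of \autoref{df:trZst} on $C(X_{F_0})$. (A smaller inaccuracy: comparison for these actions is not a consequence of allostery or minimality; the paper gets it from the fact that the action is profinite, citing \cite[Lemma~2.4]{Jos23}.)
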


\begin{proof}
The crossed product is clearly unital, separable and nuclear. Since $\alpha$ is minimal and topologically free, the work of Archbold--Spielberg \cite{ArcSpi94} implies that $C(X)\rtimes_{\alpha}\Gamma$ is simple. Thus, by \cite[Theorem 4.1]{HirOro13}, in order to establish $\mathcal{Z}$-stability for $C(X)\rtimes_{\alpha}\Gamma$, it suffices to show tracial $\mathcal{Z}$-stability. Throughout the proof, we will identify a group element $\gamma\in\Gamma$ with the canonical unitary $u_{\gamma}\in C(X)\rtimes_{\alpha}\Gamma$, and will thus, for example, write $\Gamma\subseteq C(X)\rtimes_{\alpha}\Gamma$.

Let $S\subseteq C(X)\rtimes_{\alpha}\Gamma$ be a finite subset, let $\varepsilon>0$, let $a\in C(X)\rtimes_{\alpha}\Gamma$ be a nonzero positive element, and let $n\in\mathbb{N}$ be given. We begin by making a number of standard reductions and simplifications in our setting. First, by \cite[Lemma~7.9]{Phi14} we can assume without loss of generality that $a\in C(X)_+\setminus\{0\}$. Also, we may assume that $\varepsilon<\|a\|/2$. Using that $C(X)$ and $\Gamma$ generate $C(X)\rtimes_{\alpha}\Gamma$, that $C(X)$ is the inductive limit of $C(X_F)$ for finite subsets $F\subseteq \Gamma\setminus\{1\}$, and adopting the notation introduced in \autoref{nota:generators}, we will without loss of generality assume that there exist finite subsets $F\subseteq \Gamma\setminus\{1\}$, $S_0\subseteq C(X_F)$ and $\Lambda_0\subseteq \Lambda$, and a positive element $b\in C(X_F)$ such that 
\[
S=S_0\cup \Lambda_0 \cup \big\{\xi_k^1\colon k=1,\ldots,d\big\} \ \ \mbox{ and } \ \ \|a-b\|<\varepsilon.
\]
By \cite[Lemma~2.5]{GarPer_modern_2022}, we have $(b-\varepsilon)_+\precsim a$. Since $(b-\varepsilon)_+\in C(X_F)$ is positive and nonzero (the latter because $\varepsilon<\|a\|/2$), upon replacing $a$ with it we may assume that $a\in C(X_F)$. It is proved in \cite[Lemma~3.1]{Jos23} that the action $\Gamma\curvearrowright X$ is uniquely ergodic, and we denote by $\mu$ the unique $\Gamma$-invariant Borel probability measure on $X$. Upon replacing $\varepsilon$ with $\min\big\{\varepsilon, \tfrac{1}{2}\mu\big(\mathrm{supp}(a)\big)\big\}$, we will assume without loss of generality that 
\[
\label{eq:musuppbig}\tag{5.2}\mu\big(\mathrm{supp}(a)\big)>\varepsilon.
\]
Finally, upon replacing $\Lambda_0$ with $\Lambda_0\cup \Lambda_0^{-1}$, we will assume that $\Lambda_0$ is symmetric.

Fix a natural number $m\in\mathbb{N}$ with $m > 2/\varepsilon$, and let $\eta>0$ satisfy
\[
\eta<\frac{\varepsilon}{2\big(1 + \abs{\Lambda_0} + \abs{\Lambda_0}^2 + \ldots + \abs{\Lambda_0}^{m-1}\big)}.
\]

Use \autoref{cor: cor to con and section lemma} to find a finite subset $E\subseteq\Gamma\setminus\{1\}$ with $E\cap F=\emptyset$, such that $p_\gamma>2n/\varepsilon$ for all $\gamma\in E$ and such that there exists a map $\varphi\colon\Lambda/\Lambda_E\to\Lambda$ satisfying
\begin{enumerate}
\item[(a)] $\pi \circ \varphi = \mathrm{id}_{\Lambda/\Lambda_E}$, where $\pi\colon \Lambda\to \Lambda/\Lambda_E$ is the canonical quotient map, and

\item[(b)] we have
\[
\abs{\bigg\{t \in \Lambda/\Lambda_{E} \colon \lambda\varphi(t) \neq \varphi(\lambda t) \ \text{for some} \ \lambda \in \Lambda_0\mbox{ or } \varphi(t)\in\bigcup_{\gamma\in E}\Lambda_\gamma\bigg\}} < \eta \abs{\Lambda/\Lambda_E}.
\]
\end{enumerate}

With $Y=\Lambda/\Lambda_E$ and 
\[
Y_0=\bigg\{t \in \Lambda/\Lambda_{E} \colon \lambda\varphi(t) \neq \varphi(\lambda t) \ \text{for some} \ \lambda \in \Lambda_0\mbox{ or } \varphi(t)\in\bigcup_{\gamma\in E}\Lambda_\gamma\bigg\},
\] 
use \autoref{WeddingCakeLemmaFinite} to find a function $f\colon \Lambda/\Lambda_E\to [0,1]$ satisfying
\begin{enumerate}
\item[(i)] $f(t) = 0$ for all $t \in Y_0$,

\item[(ii)] $\abs{f(\lambda t) - f(t)} \le 1/m$ for all $t\in Y$ and all $\lambda \in \Lambda_0$, and

\item[(iii)] $\abs{\left\{t \in \Lambda/\Lambda_E \colon f(t) \neq 1\right\}} \le \big(1 + \abs{\Lambda_0} + \abs{\Lambda_0}^2 + \ldots + \abs{\Lambda_0}^{m-1}\big)\abs{Y_0}$.
\end{enumerate}
In particular, we get 
\begin{enumerate}
\item[(c)] $\lambda\varphi(t) = \varphi(\lambda t)$ for all $\lambda \in \Lambda_0$ and all $t\notin Y_0$.
\end{enumerate}
Moreover, 
\[\label{eqn:SmallMeas}\tag{5.3}
\frac{\abs{\left\{t \in \Lambda/\Lambda_E \colon f(t) \neq 1\right\}}}{|Y|}
\ \stackrel{\mathrm{(iii)}}{\leq} \ \frac{\big(1 + \abs{\Lambda_0} + \abs{\Lambda_0}^2 + \ldots + \abs{\Lambda_0}^{m-1}\big)\abs{Y_0}}{|Y|}\ \stackrel{\mathrm{(b)}}{<} \ \frac{\varepsilon}{2}.
\]
		
As in \autoref{con Matthieu's examples}, for any $\gamma\in\Gamma\setminus\{1_\Gamma\}$ we set 
\[
A_\gamma \coloneqq \Big\{\xi \in \bigoplus_{\Lambda}\mathbb{Z}^d \colon \sum_{\lambda\in q} \xi(\lambda) \in \ {(p_\gamma\mathbb{Z})}^d \mbox{ for all }  q\in E_{\gamma}\Big\},
\]
which is a subgroup of $\bigoplus_{\Lambda}\mathbb{Z}^d$ that is invariant under the restriction of $\beta$ to $\Lambda_{\gamma}$. Recall that we set $\Gamma_{\gamma}\coloneqq A_{\gamma}\rtimes \Lambda_{\gamma}$ and 
		$\Gamma_E=\bigcap_{\gamma\in E}\Gamma_\gamma$, which are
		finite index subgroups of $\Gamma$, and consider
		the finite $\Gamma$-space
		$X_E=\Gamma/\Gamma_E$.
		
		Set $P = \prod_{\kappa \in F}p_\kappa$. Given $\xi \in \bigoplus_{\Lambda}\mathbb{Z}^d$, since $p_\kappa \xi\cdot \Gamma_{\kappa}\subseteq \Gamma_{\kappa}$, one checks that $p_\kappa \xi\cdot \gamma\Gamma_\kappa\subseteq \gamma\Gamma_\kappa$ for all $\gamma\in\Gamma$, and therefore $P\xi$ acts trivially
		on $X_{\kappa}$ for all $\kappa\in F$. In particular, $P\xi$ acts trivially on $X_F$. Denoting by $\alpha^{F}$ the action of $\Gamma$ on $X_F$, we have
\[\label{eq:5.1}\tag{5.4}
\alpha^F_{P\xi}(h)=h
\]
for all $h\in C(X_F)$. 
Given $x\in \mathbb{Z}^d$, we write $x_1\in\mathbb{Z}$ for its first coordinate. Set $Q\coloneqq\prod_{\gamma\in E}p_\gamma$ and define 
\[
W\coloneqq \Big\{(\xi, 1_\Lambda)\Gamma_E \in X_E \colon \xi\in \bigoplus_{\Lambda}\mathbb{Z}^d \mbox{ and } \sum_{\lambda \in \Lambda_\gamma}\xi(\lambda)_1 \equiv 0 \ \mathrm{mod} \ p_\gamma \mbox{ for all }\gamma\in E\Big\},
\]
which is a subset of the finite set $X_{E}$. 

Since $E\cap F=\emptyset$, condition~(1) in \autoref{con Matthieu's examples} implies that $Q$ and $P$ are coprime. 
Recall that in \autoref{con Matthieu's examples}, all the sets $E_\gamma$ are assumed to contain the identity $\Lambda_\gamma \in \Lambda / \Lambda_\gamma$, and thus
the trivial coset $\Lambda_\gamma$ belongs to $E_\gamma$ for all $\gamma\in \Gamma\setminus\{1\}$. Using this, we deduce that the sets 
\begin{align*}
& \ \ \ \ \ \ \ \ \ \ \ \ \ (jP\xi_1^{1_\Lambda})W \\
&= \Big\{(\xi, 1_\Lambda)\Gamma_E \in X_E \colon \xi\in \bigoplus_{\Lambda}\mathbb{Z}^d \mbox{ and }  \sum_{\lambda \in \Lambda_\gamma}\xi(\lambda)_1 \equiv jP \ \mathrm{mod} \ p_\gamma \mbox{ for all }\gamma\in E  \Big\},
\end{align*}
are pairwise disjoint for $j=0,\dots, Q-1$, and, by the Chinese Remainder Theorem, their union is 
\[
\bigsqcup_{j=0}^{Q-1}(jP\xi_1^{1_\Lambda})W = \Big\{(\xi, 1_\Lambda)\Gamma_E \in X_E \colon \xi\in \bigoplus_{\Lambda}\mathbb{Z}^d\Big\}.
\]
	
A routine calculation shows that, for every $\lambda\in \Lambda$, one has
\begin{align*}
& \ \ \ \ \ \ \ \ \ \ \ \ \ jP\xi_1^\lambda\lambda W=\lambda\cdot(jP\xi_1^{1_\Lambda})W\\
& = \Big\{(\xi,\lambda)\Gamma_E\in X_E\colon\xi\in\bigoplus_\Lambda\mathbb{Z}^d,\mbox{ and }\sum_{\lambda^\prime \in \lambda\Lambda_\gamma}\xi(\lambda^\prime)_1\equiv jP \ \mathrm{mod} \ p_\gamma \mbox{ for all }\gamma\in E\Big\}.
\end{align*}
Thus, we have
\[
\bigsqcup_{j=0}^{Q-1}(jP\xi_1^{\lambda})(\lambda W) = \Big\{(\xi, \lambda)\Gamma_E \in X_E \colon \xi\in \bigoplus_{\Lambda}\mathbb{Z}^d\Big\},
\]
for all $\lambda\in\Lambda$, and it follows that $X_E$ can be written as the disjoint union
\[\label{eqn:PartXgamma}\tag{5.5}
X_E = \bigsqcup_{t \in\Lambda/\Lambda_E}\bigsqcup_{j=0}^{Q-1}\big(jP\xi_1^{\varphi(t)}\big)(\varphi(t) W).
\]
Moreover, given $k=1,\ldots,d$ and $t\in \Lambda/\Lambda_{E}$ with $\varphi(t)\not\in\bigcup_{\gamma\in E} \Lambda_\gamma$, observe that $1_\Lambda\not\in\varphi(t)\Lambda_\gamma$ for any $\gamma\in E$, and therefore
\[
\sum_{\lambda\in\varphi(t)\Lambda_\gamma}\xi_k^1(\lambda)_1=0,
\]
whence
\[\label{eqn:xiWW}\tag{5.6}
\xi_k^1\varphi(t)W = \varphi(t)W.
\]

Dividing $Q$ by $n$, find $c\ge1$ and $r\in\{0,\dots,n-1\}$ so that $Q=nc+r$. For $j=0,\dots,n-1$, define 
\[
R_j= \big\{j,j+n,j+2n,\dots,j+(c-1)n\big\}\subseteq\mathbb{N}.
\] 
Note that $|R_j|=c$ and that $\bigsqcup_{j=0}^{n-1}R_j=\{0,\dots, nc-1\}$. Now for each $t\in\Lambda/\Lambda_E$ and each $j=0,\dots,n-1$, set 
\[\label{eqn:XEJT}\tag{5.7}
X_{E,j,t}\coloneqq \bigsqcup_{l\in R_j}lP\xi_1^{\varphi(t)}\varphi(t)W
\]
and
\[
X_R\coloneqq\bigsqcup_{t\in\Lambda/\Lambda_E}\bigsqcup_{l=nc}^{Q-1}lP\xi_1^{\varphi(t)}\varphi(t)W,
\]
with the convention that $X_R=\emptyset$ if $Q=nc$. By \eqref{eqn:PartXgamma} we obtain
\[\label{eqn:decomp}\tag{5.8}
X_E=X_R \sqcup \bigsqcup_{t\in\Lambda/\Lambda_E}\bigsqcup_{j=0}^{n-1}X_{E,j,t}.
\]

We now proceed to define a linear map $\rho\colon M_{n}(\mathbb{C})\to C(X_E) \rtimes \Gamma$ which, once composed with the inclusion $C(X_E) \rtimes \Gamma\hookrightarrow C(X) \rtimes \Gamma$, will be shown to satisfy the conditions of \autoref{df:trZst}. Let $\{e_{i,j}\}_{i,j=0}^{n-1}\subseteq M_{n}$ denote a system of matrix units for $M_n$, which we index by $i,j=0,\dots,n-1$ for convenience. Define
\[
\rho(e_{i,j})=\sum_{t \in \Lambda/\Lambda_E} f(t)\mathbbm{1}_{X_{E,i,t}}u_{P(i-j)\xi_1^{\varphi(t)}} \in C(X_E) \rtimes \Gamma
\]
and extend linearly to $M_n$. We claim that $\rho$ is completely positive contractive and order zero. To see this, first observe that
\[
\rho(1)=\sum_{t\in\Lambda/\Lambda_E}f(t)\mathbbm{1}_{\bigsqcup_{j=0}^{n-1}X_{E,j,t}}\in C(X_E)
\]
is positive and contractive. Define a map $\psi\colon M_n\to C(X_E)\rtimes\Gamma$ by setting
\[
\psi(e_{i,j})\coloneqq\sum_{t\in\Lambda/\Lambda_E}\mathbbm{1}_{X_{E,i,t}}u_{P(i-j)\xi_1^{\varphi(t)}}
\]
for all $i,j=0,\dots,n-1$ and extending linearly. Some tedious but nevertheless routine calculations --the details of which we omit-- then verify that $\psi$ is a $*$-homomorphism and that $\rho(e_{i,j})=\rho(1)\psi(e_{i,j})=\psi(e_{i,j})\rho(1)$ for all $i,j=0,\dots,n-1$. We conclude that $\rho$ is completely positive, contractive, and order zero. 

It remains to show that $\rho$ satisfies the conditions in \autoref{df:trZst}. In order to prove that $1-\rho(1)\precsim a$, note first that
\[
1-\rho(1)=\mathbbm{1}_{X_R}+\sum_{t\in\Lambda/\Lambda_E}(1-f(t))\mathbbm{1}_{\bigsqcup_{j=0}^{n-1}X_{E,j,t}}.
\]
It is proved in \cite[Lemma~3.1]{Jos23} that the action $\Gamma\curvearrowright X$ is uniquely ergodic, and that the unique $\Gamma$-invariant probability measure $\mu$ is induced by the normalized counting measures on each building block $X_F$.
Using at the fifth step that $n/Q<\varepsilon/2$ since $p_\gamma> 2n/\varepsilon$ for all $\gamma\in E$, we deduce that
\begin{align*}
\mu\big(\mathrm{supp}(1-\rho(1))\big)&=\mu(X_R)+\mu\Big(\bigsqcup_{\substack{t\in\Lambda/\Lambda_E\\ f(t)\ne 1}}\bigsqcup_{j=0}^{n-1}X_{E,j,t}\Big) \\
&=\frac{|\Lambda/\Lambda_E|\cdot r\cdot|W|}{|\Lambda/\Lambda_E|\cdot Q\cdot |W|}+\frac{|\{t\in\Lambda/\Lambda_E\colon f(t)\ne 1\}|\cdot n\cdot c\cdot |W|}{|\Lambda/\Lambda_E|\cdot Q\cdot |W|}\\
&\le \frac{n}{Q}+\frac{|\{t\in\Lambda/\Lambda_E\colon f(t)\ne 1\}|}{|\Lambda/\Lambda_E|}\\
&\stackrel{\mathmakebox[\widthof{=}]{\eqref{eqn:SmallMeas}}}{<}\frac{n}{Q}+\frac{\varepsilon}{2}\\
&<\varepsilon\\
&\stackrel{\mathmakebox[\widthof{=}]{\eqref{eq:musuppbig}}}{\leq} \mu(\mathrm{supp}(a)).
\end{align*}

By \cite[Lemma~2.4]{Jos23}, the profinite action $\Gamma\curvearrowright X$ has dynamical comparison, and thus by \autoref{prop:DynCompCuntz} we conclude that $1-\rho(1)\precsim a$. 

We proceed to verify condition~(2) in \autoref{df:trZst}. Recall that we have assumed that $S$ has the form $S=S_0\cup \Lambda_0 \cup \big\{\xi_k^1\colon k=1,\ldots,d\big\}$.
Given $h \in S_0\subseteq  C(X_F)$ and $0\leq i,j\leq n-1$, we have
\begin{align*}
h\rho(e_{i,j})&=h\sum_{t \in \Lambda/\Lambda_E} f(t)\mathbbm{1}_{X_{E,i,t}}u_{P(i-j)\xi_1^{\varphi(t)}} \\
&= \sum_{t \in \Lambda/\Lambda_E} f(t)\mathbbm{1}_{X_{E,i,t}}u_{P(i-j)\xi_1^{\varphi(t)}}\alpha^F_{(j-i)P\xi_1^{\varphi(t)}}(h) \\
&\stackrel{\mathmakebox[\widthof{=}]{{\eqref{eq:5.1}}}}{=} \sum_{t \in \Lambda/\Lambda_E} f(t)\mathbbm{1}_{X_{E,i,t}}u_{P(i-j)\xi_1^{\varphi(t)}}h\\
&=\rho(e_{i,j})h.
\end{align*}
Next, let $k = 1, \ldots, d$ and $0\leq i,j\leq n-1$ be given. Note that, in the group $\Gamma$, we have 
$\xi_k^1 \cdot \ell P\xi_1^{\varphi(t)}= \ell P \xi_1^{\varphi(t)} \cdot\xi_k^1$ for all $t\in\Lambda/\Lambda_E$ and all $\ell\in\mathbb{Z}$, since the factors belong to the (commutative) subgroup
$\bigoplus_{\Lambda}\mathbb{Z}^d$. Using this, we see that for $t \in \Lambda/\Lambda_E$ such that $\varphi(t)\not\in\bigcup_{\gamma\in E}\Lambda_\gamma$, one has
\[\label{eqn:xik1XEJT}\tag{5.9}
\xi_k^1X_{E,i,t}=\bigsqcup_{l\in R_i}lP\xi_1^{\varphi(t)}\xi_k^1\varphi(t)W\stackrel{\eqref{eqn:xiWW}}{=}\bigsqcup_{l\in R_i}lP\xi_1^{\varphi(t)}\varphi(t)W=X_{E,i,t}.
\]
Using commutativity again, as well as the fact that $f(t)=0$ whenever $t\in\Lambda/\Lambda_E$ satisfies $\varphi(t)\in\bigcup_{\gamma\in E}\Lambda_\gamma$, we obtain
\begin{align*}
u_{\xi_k^1}\rho(e_{i,j})&=u_{\xi_k^1} \sum_{t \in \Lambda/\Lambda_E} f(t)\mathbbm{1}_{X_{E,i,t}}u_{P(i-j)\xi_1^{\varphi(t)}} \\
&= \sum_{t \in \Lambda/\Lambda_E} f(t)\mathbbm{1}_{\xi_k^1X_{E,i,t}}u_{P(i-j)\xi_1^{\varphi(t)}}u_{\xi_k^1} \\ 
&\stackrel{\mathmakebox[\widthof{=}]{{\eqref{eqn:xik1XEJT}}}}{=}\sum_{t \in \Lambda/\Lambda_E} f(t)\mathbbm{1}_{X_{E,i,t}}u_{P(i-j)\xi_1^{\varphi(t)}} u_{\xi_k^1} \\
&=\rho(e_{i,j})u_{\xi_k^1}.
\end{align*}

Lastly, for elements $\lambda \in \Lambda_0$ and $0\leq i,j\leq n-1$, using at the seventh and tenth steps that the sets $X_{E,i,t}$, for $t\in\Lambda/\Lambda_E$, are pairwise disjoint (see \eqref{eqn:decomp}), we get
\begin{align*}
u_\lambda \rho(e_{i,j})&=u_\lambda \sum_{t \in \Lambda/\Lambda_E} f(t)\mathbbm{1}_{X_{E,i,t}} u_{P(i-j)\xi_1^{\varphi(t)}} \\
&= \sum_{t \in \Lambda/\Lambda_E} f(t)\mathbbm{1}_{\lambda X_{E,i,t}}u_{\lambda\cdot P(i-j)\xi_1^{\varphi(t)}} \\
&\stackrel{\eqref{eqn:IdentityGamma}}{=} \sum_{t \in \Lambda/\Lambda_E} f(t)\mathbbm{1}_{\lambda X_{E,i,t}}u_{P(i-j)\xi_1^{\lambda\varphi(t)}}u_\lambda\\
&\stackrel{\eqref{eqn:IdentityGamma},\;\eqref{eqn:XEJT}}{=} \sum_{t \in \Lambda/\Lambda_E} f(t)\mathbbm{1}_{\bigsqcup_{l\in R_i}lP\xi_1^{\lambda\varphi(t)}\lambda\varphi(t)W}u_{P(i-j)\xi_1^{\lambda\varphi(t)}}u_\lambda\\ 
&\stackrel{\mathrm{(i)}}{=} \sum_{t \notin Y_0} f( t)\mathbbm{1}_{\bigsqcup_{l\in R_i}lP\xi_1^{\lambda\varphi(t)}\lambda\varphi(t)W}u_{P(i-j)\xi_1^{\lambda\varphi(t)}}u_\lambda \\
&\stackrel{\mathrm{(c)},\;\eqref{eqn:XEJT}}{=} \sum_{t \notin Y_0} f( t)\mathbbm{1}_{X_{E,i,\lambda t}}u_{P(i-j)\xi_1^{\varphi(\lambda t)}}u_\lambda \\
&\stackrel{\mathrm{(ii)}}{\approx}_{\hspace*{-1.5px} \varepsilon/2}\sum_{t \notin Y_0} f( \lambda t)\mathbbm{1}_{X_{E,i,\lambda t}}u_{P(i-j)\xi_1^{\varphi(\lambda t)}}u_\lambda \\
&=\sum_{t \notin  \lambda  Y_0} f( t)\mathbbm{1}_{X_{E,i,t}}u_{P(i-j)\xi_1^{\varphi( t)}}u_\lambda \\
&=\rho(e_{i,j})u_\lambda - \sum_{t \in  \lambda  Y_0} \hspace{-1.2cm} \underbrace{f( t)}_{\hspace{1.4cm} < \, \varepsilon/2 \ \text{by (i) and (ii)}}\hspace{-1.2cm} \mathbbm{1}_{X_{E,i,t}}u_{P(i-j)\xi_1^{\varphi( t)}}u_\lambda \\
&\approx_{ \varepsilon/2}
\rho(e_{i,j})u_\lambda.
\end{align*}
In particular, $u_\lambda\rho(e_{i,j})\approx_\varepsilon\rho(e_{i,j})u_\lambda$. This concludes the proof.
\end{proof}

\end{document}